\newcommand{\mfkA}{\mathfrak{A}}
\newcommand{\RR}{{\mathbb R}}
\newcommand{\NN}{{\mathbb N}}
\newcommand{\ZZ}{{\mathbb Z}}
\newcommand{\CC}{{\mathbb C}}
\newcommand{\LL}{{\mathbb L}}
\newcommand\CIb{\mathcal C_b^\infty}
\newcommand{\cB}{{\mathcal B}}
\newcommand{\cC}{{\mathcal C}}
\newcommand{\cD}{{\mathcal D}}
\newcommand{\cF}{{\mathcal F}}
\newcommand{\cG}{{\mathcal G}}
\newcommand{\cL}{{\mathcal L}}
\newcommand{\cV}{{\mathcal V}}
\newcommand{\D}{{\partial }}
\newcommand{\bR}{\mathbb{R}}
\newcommand{\bL}{\mathbb{L}}
\newcommand{\bC}{\mathbb{C}}
\newcommand\ie{{\em i.e., }}
\newcommand\adj{\operatorname{ad}}
\newtheorem{theorem}{Theorem}[section]
\newtheorem{proposition}[theorem]{Proposition}
\newtheorem{corollary}[theorem]{Corollary}
\newtheorem{lemma}[theorem]{Lemma}
\theoremstyle{definition}
\newtheorem{definition}[theorem]{Definition}
\theoremstyle{remark}
\newtheorem{remark}[theorem]{Remark}
\numberwithin{equation}{section}
\newcommand{\beq}{\begin{equation}}
\newcommand{\eeq}{\end{equation}}
\newcommand{\<}{\langle}
\renewcommand{\>}{\rangle}
\renewcommand{\a}{\alpha}
\newcommand{\s}{\sigma}
\newcommand\ede{\, := \,}
\newcommand\seq{\, = \,}
\renewcommand\sout{\bgroup\markoverwith
{\textcolor{red}{\rule[0.7ex]{3pt}{1.4pt}}}\ULon}
\newcommand\jap[1]{\langle #1 \rangle}
\renewcommand\adj{\operatorname{ad}}
\definecolor{DarkRed}{rgb}{0.8,0.3,0.6}
\definecolor{Green}{rgb}{0.010,0.7,0.02}
\newcommand{\pa}{\partial}
\newcommand{\fA}{\mathfrak{A}}
\newcommand{\Rmnum}[1]{\expandafter\@slowromancap\romannumeral #1@}
\title[Parabolic Equations]{Approximate Solutions to Second-Order
  Parabolic Equations: evolution systems and discretization}
\author[W. Cheng]{Wen Cheng}
\email{cheng@math.psu.edu}
\address{Equity Derivative Quantitative Strategies, Credit Suisse, 
11 Madison Avenue, New York, NY 10010, USA}
\author[A. Mazzucato]{Anna L. Mazzucato}
\email{alm24@psu.edu}
\address{Penn State University, Mathematics Department, University Park, PA 16802,
  USA}
\author[V. Nistor]{Victor Nistor}
\email{victor.nistor@univ-lorraine.fr}
\address{Universit\'e de Lorraine, 3, rue Augustin Fresnel, 57000 Metz, France}
\thanks{A.M. was partially supported by the US National Science Foundation grant
DMS-1909103.
 }
\date\today
\begin{document}

\begin{abstract}
We study the discretization of a linear evolution 
partial differential equation when its Green function is known. 
We provide error estimates both for the spatial approximation 
and for the time stepping approximation. We show that, in fact,
an approximation of the Green function is almost as good
as the Green function itself. For suitable time-dependent
parabolic equations, we explain how to obtain good, explicit approximations
of the Green function using the Dyson-Taylor commutator method (DTCM) 
that we developed in {\em J. Math. Phys.} (2010). This approximation for 
short time, when combined with a bootstrap argument, gives 
an approximate solution on any fixed time interval within 
any prescribed tolerance.
\end{abstract}

\maketitle

\begin{center}
\dedicatory{\em In loving memory of Rosa Maria (Rosella) Mininni}
\end{center}

\tableofcontents

\section{Introduction}
\label{sec1}

We consider an initial value problem (IVP) of the form
\begin{equation}\label{eq.def.IVP}
\begin{cases}
    \ \pa_t u (t) - L(t) u(t) \seq f  \\
    \ u(s) \seq h \,.
\end{cases}
\end{equation}
We require $u(t)$ and $h$ to belong to suitable given Sobolev spaces
on some $\Omega \subset \bR^N$. If $\Omega \neq \bR^N$, we impose suitable 
boundary conditions. Let us assume $f = 0$. The {\em solution operator,} 
if it exists, is then $U^L(t, s)h = u(t)$;
it defines what is called an {\em evolution system} \cite{Amann, Lunardi, Pazy}
(we also recall the definition of an evolution system below, see Definition 
\ref{def.evolutionsys}). We have
\begin{equation}\label{eq.def.Green}
    \big[ U^L(t, s)h \big] (x) \seq \int_{\Omega} \cG_{t, s}^L(x, y) h(y) dy\,,
\end{equation}
when such a distribution $\cG_{t, s}^L(x, y)$ exists. We call this distribution
the {\em Green function} $\cG_{t, s}^L(x, y)$ of the evolution system $U^L$.
(In the cases considered in this paper, it will be a {\em true function.} We
shall also say that $\cG_{t, s}^L(x, y)$ is the Green function of $\pa_t - L$.)


In this paper we address the following questions:

\begin{enumerate}
\item Assuming that the Green function $\cG_{t, s}^L(x, y)$ of
the evolution system $U^L$ is known, to establish the properties of 
the approximations of $u(t)$ in suitable discretization spaces $S$.

\item To show that suitable good approximations of the Green function
are (almost) as good as the Green function itself. 

\item To provide a method to find good approximations of the 
Green function, including complete error estimates.
\end{enumerate}

To obtain significant results on the above questions, we shall make
suitable assumptions. First, we assume that
\begin{equation}\label{eq.operator}
  L \ede \sum_{i,j}^{N} a_{ij}(t,x) \partial_i \partial_j + \sum_{i}^{N}
  b_i(t,x) \partial_i + c(t,x),
\end{equation}
with $x = (x_1, ..., x_N) \in \bR^N$ and $\D_k := \frac{\D}{\D x_k}$.
Almost nothing changes in most of our discussion if we consider nice 
subsets $\Omega \subset \bR^N$
instead $\bR^N$, except that we have to consider boundary conditions. 
So we shall work on $\bR^N$ in this paper.
The coefficients $a_{ij}$, $b_i$, and $c$ are assumed smooth and bounded and all
their derivatives are assumed to be uniformly bounded (\ie, they are
assumed to be in $W^{\infty, \infty}(\bR^+\times \bR^N) = \cC_b^\infty(\bR^+\times\bR^N)$.)
For simplicity, we assume that  $a_{ij}=a_{ji}$ as well. We impose a uniform strong ellipticity 
condition on the operators $L(t)$, meaning that there exists a constant $\gamma>0$, 
such that
\begin{equation}\label{eq.uniformly.elliptic}
  \sum a_{ij}(t,x)\xi_i \xi_j \geq \gamma \|\xi\|^2, \quad \forall
   t\ge 0, \ x, \,\xi \in \bR^N,\; \xi\ne 0.
\end{equation}
We collectively denote by $\LL_\gamma$ the class of operators $L$ of the
form \eqref{eq.operator} satisfying the ellipticity condition
\eqref{eq.uniformly.elliptic} and  the coefficients of which, together
with all their derivatives, are bounded (see Definition \ref{def.Lgamma}).

Let us address in more detail the three main contributions of this paper, 
listed above.
\smallskip

(1)\ 
The first contribution  ({\em``Assuming that the Green function $\cG_{t, s}^L(x, y)$ 
of the evolution system $U^L$ is known, to establish the properties of the 
approximations of $u(t)$ in suitable discretization spaces $S$''}) addresses a 
very natural question. Even if, theoretically, the knowledge of the initial data 
$h$ and of the Green functions $\cG_{t, s}^L(x, y)$ determines the solution
$u$ via integration: $u(t, x) = \int_{\bR^N} \cG_{t, 0}^L(x, y) h(y) dy$,
in practice, two other issues arise. The first one is that we can store in
the memory of our computer only a {\em finite dimensional} space $V$ of solutions.
We thus need to {\em discretize} our equation and to {\em approximate} both
the initial data and the solution with elements of $V$. Our first result,
Theorem \ref{thm.disc.error} gives a ``proof of concept'' result on how such
a discretization (in the space variable) works. The main point of the
result is that the projection error has to decrease in time at the same
order as the time itself (see Theorem \ref{thm.disc.error}, especially
the Condition \ref{eq.def.cond.com}). In our setting, we know few error estimates of 
this kind, but in the general framework of Finite Difference or Finite Element
methods for evolution equations, there are some similar results
\cite{GlowinskiLions, schwabFinBook, Thomee, Samarskii, LarssonThomee}.

(2)\
Our second contribution ({\em ``To show that suitable good approximations of the 
Green function are (almost) as good as the Green function itself''})
addresses another natural question, which is what kind of approximations
of the Green functions would be acceptable, in case the Green function
itself is not known? Assume that an approximate Green function 
$\widetilde{\cG}_{t, s}^L(x, y)$ is given. Assume also that the
discretization in space is to divide the time interval $[0, T]$ in
$n$ equal size intervals (we will always use this very common procedure).
If the error $\|\cG_{t, s}^L - \widetilde{\cG}_{t, s}^L\|$ is of
the order of $(t - s)^\alpha$, then we show that the order of the error
due to time discretization (or bootstrap) is of the order $n^{1-\alpha}$.
This shows that we need a rather good approximation of the Green
function ($\alpha > 1$). The bootstrap method is the one we developed in 
\cite{CCLMN, CMN-preprint}. It is a common method in Finite Difference and 
Finite Element methods \cite{GlowinskiLions, schwabFinBook, Thomee, Samarskii, 
LarssonThomee}. For Green functions, a similar method was more recently suggested 
in \cite{Pascucci15}.

A common issue in both space and time discretization (\ie, in (1) and (2))
is that we need to find error estimates that are at least of the order
of $(t-s)$ (in fact, even better for (2)). We know very few earlier results 
in the line of (1) and (2).
\smallskip

(3)\
Our third contribution {\em (``To provide a method to find good approximations of 
the Green function, including complete error estimates'') } fits into a very long
sequence of results concerning heat kernel approximations and Dyson series
expansions. Here the list of papers that one could quote is truly Gargantuan,
but let nevertheless mention the papers \cite{Berestycki02, Berestycki04,  
FJacquier09, FPSS03, GatheralJacquier11, Les03, Labordere05, Labordere07} among 
earlier papers most closely related to our work \cite{CCMN, CCLMN, WenThesis, 
CMN-preprint} (in chronological order), in which we have developed the 
Dyson-Taylor commutator method used in this paper. 
Let us mention also the more recent papers \cite{Gatheral12a, 
Gatheral12b, Xiao, GHJ18, Les15, Les17, Pascucci17, Siyan}, where the reader will be
able to find further references. Some general related monographs 
include \cite{FPSS11, LabordereBook, LewisBook00}.

For the Green function approximation, we use the Dyson-Taylor commutator method (DTCM) 
developed in \cite{CCMN, CCLMN, WenThesis, CMN-preprint}, which we also expand and
make more precise. A similar method was employed more
recently in \cite{Pascucci15, Pascucci17}. The main result regarding this third questions 
is a sharp error estimate in weighted Sobolev spaces.
This error estimate, when combined with the results of (2) and
using the bootstrap argument we developed \cite{CCLMN} gives 
an approximate solution on any fixed time interval within 
any prescribed tolerance. Our method is such that also derivatives of the
solution can be effectively approximated with verified bounds (with the price of 
increasing the order of approximation). Our error estimates are in 
{\em exponentially weighted Sobolev spaces} $W_a^{r, p}(\bR^N) 
= e^{-a \jap{x} } W^{r, p}(\bR^N)$.

Our main result is the following. (The class $\bL_\gamma$ was introduced
above, but see also \ref{def.Lgamma}.)

\begin{theorem}\label{thm.maintheorem}
Let $L$ be an operator in the class
$\LL_\gamma$. Then $L$ generates an evolution system $U^L$ in the
Sobolev space $W^{r,p}_{a}(\RR^N)$, $r\geq 0$, $1<p<\infty$, $a\in
\RR$. Given $m \in \NN$, there exists an explicitly computable
smooth function $\cG_{t, s}^{[m,z]}(x,y)$, given in Definition \ref{def.orderm},
such that the Green's function for $\pa_t-L$ can be represented as  
\begin{equation*}
  \cG_{t, s}^{L}(x,y) := \cG_{t,s}^{[m]}(x,y) 
  + (t-s)^{(m+1)/2} \widetilde{E}^{t,s}_{m}(x,y)\,,
\end{equation*}
where the remainder $\widetilde{E}^{t,s}_{m}$, 
when regarded as an integral operator, satisfies
\begin{equation*}
  \|\widetilde{E}^{[m]}_{t, s} g\|_{W^{r+k,p}_{a}} \leq C\,
  (t-s)^{-k/2}\|g\|_{W^{r,p}_{a}}, \quad 0 \le s < t \le T,\ k \in \NN
\end{equation*}
with a bound $C$ depending on $L, m, a, k, r, p,z$, and
$0<T<\infty$, but  independent of $g$ and $s, t\in [0,T]$, $s \le t$.
\end{theorem}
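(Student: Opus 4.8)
The plan is to build the approximate Green function $\cG_{t,s}^{[m]}$ via a Taylor expansion in the ``space'' variable of the coefficients of $L$ around a reference point $z$, combined with a Dyson (time-ordered) series expansion of the evolution. First I would freeze the coefficients of $L(t)$ at $x = z$, obtaining a time-dependent constant-coefficient operator $L_0(t)$ whose Green function $\cG^{0}_{t,s}$ is an explicit Gaussian (a heat kernel with a drift and a zeroth-order exponential factor), computable by Fourier transform. Writing $L(t) = L_0(t) + V(t)$, where $V(t)$ collects the differences of the coefficients (each vanishing to first order at $z$, and hence ``small'' near $z$), Duhamel's principle gives the Dyson series $U^L(t,s) = \sum_{j\ge 0} U_j(t,s)$ with $U_j(t,s) = \int_{s \le \tau_1 \le \dots \le \tau_j \le t} U^0(t,\tau_j) V(\tau_j) U^0(\tau_j,\tau_{j-1}) \cdots V(\tau_1) U^0(\tau_1, s)\, d\tau$. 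The approximant $\cG^{[m]}_{t,s}$ is then obtained by (i) truncating this series at a finite order and (ii) further Taylor-expanding each perturbation factor $V(\tau)$ in powers of $x - z$, keeping enough terms so that the total homogeneity in $\sqrt{t-s}$ is controlled up to order $m$; this is exactly the ``Dyson--Taylor commutator method.'' The bookkeeping uses the parabolic scaling in which $\partial_i$ and $\sqrt{t-s}$ have balanced weights, and commutators $[\partial_i, V]$ lower the order by gaining a power of $(x-z)$ (equivalently a factor $\sqrt{t-s}$ after pairing against the Gaussian).

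The core analytic estimate is the mapping bound: each building block $U^0(t,s)$ acts on the exponentially weighted Sobolev scale $W^{r,p}_a(\bR^N)$ with the smoothing estimate $\|U^0(t,s) g\|_{W^{r+k,p}_a} \lesssim (t-s)^{-k/2} \|g\|_{W^{r,p}_a}$, which follows from explicit Gaussian kernel bounds (conjugation by $e^{-a\jap{x}}$ only perturbs the symbol by bounded lower-order terms, since $\jap{x}$ has bounded derivatives of all orders $\ge 1$). Then I would estimate a typical term $U_j$ of the Dyson--Taylor expansion: the $j$ integrations against the singular factors $(t-\tau_j)^{-1/2}\cdots$ produce a Beta-function integral that converges and yields an overall gain $(t-s)^{j/2}$ (up to the order the perturbation has been Taylor-truncated), the point being that each $V$ contributes a vanishing-at-$z$ coefficient which, combined with a derivative falling on the Gaussian, is worth a full power of $\sqrt{t-s}$. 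Summing the kept terms gives $\cG^{[m]}_{t,s}$; collecting the first omitted order gives the remainder $\widetilde E^{t,s}_m$ with the claimed $(t-s)^{-k/2}$ loss of $k$ derivatives and the stated gain $(t-s)^{(m+1)/2}$. The generation statement — that $L$ actually generates an evolution system $U^L$ on each $W^{r,p}_a$ — is classical (strong ellipticity plus bounded smooth coefficients; cf. Amann, Lunardi, Pazy) and I would either cite it or recover it a posteriori from the convergence of the full Dyson series together with the remainder bound.

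The main obstacle I anticipate is the \emph{uniform control of the remainder on the whole weighted scale}, i.e.\ proving the estimate with a constant $C$ independent of $s,t \in [0,T]$ and of $g$, while allowing an arbitrary number $k$ of extra derivatives. Two difficulties combine here. First, the Taylor remainder in $x-z$ of the coefficients is not globally small — it only improves things \emph{locally} near $z$ — so one must exploit the Gaussian localization of $U^0$ carefully: powers of $(x-z)$ are integrable against $\cG^0_{t,s}(x,y)$ with weight $(t-s)^{\text{(half the power)}}$, but tracking this through $j$ nested convolutions, against the exponential weight $e^{-a\jap{x}}$, and uniformly in the off-diagonal behavior, requires precise Gaussian-moment lemmas (this is where weighted spaces are essential and where the reference point $z$ enters the constant). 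Second, the Beta-integral arising from the time-ordered integrations is only borderline convergent when many derivatives are demanded — the factor $(t-\tau_1)^{-1/2}$ versus the gained $(\tau_1-s)^{\text{power}/2}$ must be arranged so that every exponent stays $> -1$; this is a combinatorial constraint that dictates precisely how many Taylor terms must be retained at each Dyson order, and getting this accounting exactly right (so that the final gain is $(t-s)^{(m+1)/2}$ and not less) is the technical heart of the argument. I would organize the induction on $m$, using the commutator identities to reduce order-$m$ remainders to order-$(m-1)$ ones with an extra $\sqrt{t-s}$, and close the estimates with the Gaussian-moment and Beta-function lemmas established beforehand.
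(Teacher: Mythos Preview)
Your overall strategy---Dyson series plus Taylor expansion of the coefficients, controlled via commutator identities and analytic-semigroup smoothing---is the right one, but the execution differs from the paper's in several structural respects. First, the paper does \emph{not} work directly with a time-dependent frozen operator $L_0(t)$; instead it performs a parabolic rescaling $L \mapsto L^s$ (with $s=\sqrt{t}$) centered at $z$, reducing everything to estimating $U^{L^s}(1,0)$ at \emph{fixed} time $1$. The Taylor expansion is then in the single scalar parameter $s$, $L^s = L_0 + sL_1 + \cdots$, with $L_0$ \emph{time-independent} (coefficients frozen at $(t,x)=(0,z)$); this is what makes the Hadamard commutator series terminate and yield explicit polynomial-in-$(x-z)$ differential operators acting on $e^{L_0}$. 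Second, the paper does not bound the kept terms by Gaussian-moment/Beta-integral estimates: after rescaling back and replacing the fixed $z$ by an admissible \emph{function} $z=z(x,y)$ (a point your sketch omits), it shows that each $\Lambda_\alpha$ becomes a pseudodifferential operator $\varrho_s(x,sD)$ with $\{\varrho_s\}$ uniformly bounded in $S^{-\infty}$, and then invokes standard $\Psi$DO mapping properties on $W^{r,p}$. Third---and this is where your plan is thinnest---the remainder is not handled by induction on $m$ but by a bootstrap in the auxiliary truncation levels: since $\cG_t^{[m]}$ and hence $\widetilde E_m^t$ are independent of $d,n$ once $d,n\ge m$, one passes to a much larger $M\ge m+r-1$, controls the intermediate terms $\Lambda^\ell$ ($m<\ell\le M$) by the $\Psi$DO estimates just obtained, and applies a crude Riesz-lemma kernel bound to the level-$M$ remainder, which now carries enough extra powers of $s$ to absorb the loss. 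Your direct Beta-integral route is conceivable but, as you yourself flag, sits at the edge of integrability; the paper's rescaling-plus-$\Psi$DO approach sidesteps this by pushing all singular behavior into the explicit scaling and working at $t=1$.
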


Together with Theorem \ref{thm.b.error}, this yield an approximation
of the solution $u$ of our Initial Value problem \eqref{eq.def.IVP}.

The paper is
organized as follows. In Section \ref{sec2}, we remind some standard
facts about non-autonomous, second-order initial value problems $(\partial_t-L(t))u(t,x)=0$
and the evolution system they generate. In Section \ref{sec3}, we establish
space discretization and time discretization (bootstrap) error estimates in 
a general, abstract setting. The setting is that of an evolution system 
that satisfies some standard exponential bounds. These exponential bounds
are satisfied both in the parabolic and hyperbolic settings, so they are
realistic. Beginning with Section \ref{sec4}, we specialize to the case of operators 
$L \in \bL_{\gamma}$. In that section, we introduce weighted Sobolev spaces,
we study the evolution system generated by $L \in \bL$. Using the theory of 
analytic semigroups, we establish explicit
mapping properties that allow us to make sense of the integrals appearing in
the iterative time-ordered expansions that we use (the resulting formulas
are sometimes called Dyson-series and are well known and much used in the 
Physics literature). The time-ordered expansion is obtained, as usual,
using Duhamel's principle iteratively. Section \ref{sec5} contains a formal 
derivation of the asymptotic expansion of the solution operator for the Equation
\eqref{eq.def.IVP}. This derivation allows us to use the  method from \cite{CCMN} 
for computing the time-ordered integral appearing in the resulting Dyson series 
expansion using Hadamard's formula:
\begin{equation} \label{lemma.hadamard}
e^A B = \left( B+  [A,B] + \frac{1}{2!}[A, [A,B]] + \frac{1}{3!}[A,
[A,[A,B] ]  ] + \dots \right) e^{A}.
\end{equation}
Here we use the crucial observation in \cite{CCMN} that, in the cases of interest 
for us, this series reduces to a finite, explicit sum. 
In Section \ref{sec6}, we introduce our approximate Green function, 
we prove Theorem \ref{thm.maintheorem}, and 
we complete our error analysis. Technically, this section is one
of the most demanding.

Throughout the paper, unless explicitly mentioned, $C$ will denote a
generic constant that may be different each time when it is used.
We employ standard notation for function spaces throughout, in particular 
$W^{r,p}$, $1\leq p\leq \infty$, $r\in \RR$  for standard $L^p$-based Sobolev 
spaces on $\RR^n$, and $H^s=W^{s,2}$. We also denote the space
of continuous functions (which may take values in a Banach space) with $\cC$, and by 
$\cC^\infty_b$ the space of smooth functions with bounded derivatives of all orders.

The results of this paper are based in great part and extend some results 
in \cite{WenThesis} and an unpublished 2011 IMA preprint \cite{CMN-preprint}. 
See \cite{Xiao, Pascucci17, Siyan} for some recent, related results to
that preprint. However, Section \ref{sec3} is essentially new. Also, we did
not include the numerical test and the explicit calculations of the SABR model
from \cite{CMN-preprint}, to keep this paper more focused (and to limit its size).

\medskip

\noindent {\bf Convention:} {\em we use throughout the usual
  multi-index notation for derivatives with respect to the {\em space}
  variable $x$, that is, \ $\pa^\alpha = \pa^{\alpha_1}_{1} \ldots
  \pa^{\alpha_N}_{N}$, \ $\alpha=(\alpha_1,\ldots,\alpha_N) \in \ZZ_+^N$, and
  \ $|\alpha|=\sum_{i=1}^N \alpha_j$,
$\pa_j = \frac{\pa}{\pa x_j}$, while  $\pa_t = \frac{\pa}{\pa t}$.
 Throughout, we fix an arbitrary $0<T<\infty$.}

\medskip

\subsection*{Acknowledgments}
The authors would like to thank Radu Constantinescu, Nicola
Costanzino, John Liechty, Jim Gatheral, Christoph Schwab, and
Ludmil Zikatanov for useful discussions. 

\section{Preliminaries on evolution systems \label{preliminary}}
\label{sec2}

We refer the reader to \cite{Amann, Lunardi, Pazy} for the functional 
analytic framework we employ.

Let $X$ be a Banach space and let $A : \cD(A) \to X$ be a 
(possibly unbounded) closed linear operator with domain
$\cD(A) \subset X$. We let $\rho(A)$ denote its resolvent set, that is,
the set of $\lambda \in \CC$ such that $A - \lambda : \cD(A) \to X$ is
a bijection. We let $R(\lambda, A) := (\lambda - A)^{-1} : X \to X$
be its {\em resolvent}, for $\lambda \in \rho(A)$.

\begin{definition} \label{def.sectorial}
A closed operator $A : \cD(A) \to X$ is called {\em sectorial} if there
are constants $\omega \in \RR$, $\theta \in (\pi/2,\pi)$, and $M>0$
such that
\begin{equation*}
\left \{
\begin{array}{l}
  \rho(A) \supset S_{\theta,\omega} := \{\lambda \in \CC ,\,
  \lambda \neq \omega, |arg(\lambda-\omega)|<\theta\},\\[1mm]
  \|R(\lambda,A)\| \leq M/|\lambda-\omega|, \ \forall
  \lambda \in S_{\theta,\omega} \,.
\end{array}
\right.
\end{equation*}
\end{definition}

As discussed later, sectoriality implies mapping properties for the
evolution system $U(t,r)$, generated by $L(t)$. The following well known 
proposition (see again \cite[page 43]{Lunardi} for
a proof) gives a sufficient condition that  guarantees the sectoriality of an
operator.

\begin{proposition}\label{condition.on.sectorial}
Let $A$ be a linear operator $\cD(A)\subset X \to X$.
Assume that there exists $\omega \in \RR$ and $M>0$  such that
$\rho(A)$ contains the half plane $\{\lambda \in \bC, \, \operatorname{Re}
  \lambda \geq \omega\}$ and
\begin{equation*}
  \|\lambda R(\lambda,A)\|_{\cL(X)} \leq M, \quad \forall
  \operatorname{Re} \lambda \geq \omega.
\end{equation*}
 Then $A$ is sectorial.
\end{proposition}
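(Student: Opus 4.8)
The plan is to enlarge the region carrying a good resolvent estimate: I would start from the bound on the vertical line $\operatorname{Re}\lambda=\omega$ supplied by the hypothesis and propagate it into the left half-plane by the standard Neumann-series perturbation of the resolvent. Since the hypotheses for a value $\omega$ hold verbatim for any larger value, and since sectoriality only asserts the \emph{existence} of suitable constants, I may assume $\omega\ge 0$; I may also assume $M\ge 1$. The perturbation fact I will use is: if $\lambda_0\in\rho(A)$ and $|\mu-\lambda_0|\,\|R(\lambda_0,A)\|_{\cL(X)}<1$, then $\mu\in\rho(A)$, $R(\mu,A)=\sum_{n\ge 0}(\lambda_0-\mu)^n R(\lambda_0,A)^{n+1}$, and $\|R(\mu,A)\|\le \|R(\lambda_0,A)\|\big(1-|\mu-\lambda_0|\,\|R(\lambda_0,A)\|\big)^{-1}$; this follows by factoring $\mu-A=(\lambda_0-A)\big(I-(\lambda_0-\mu)R(\lambda_0,A)\big)$ and inverting the bracket as a geometric series.

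I would apply this with $\lambda_0=\omega+iy$, $y\in\RR\setminus\{0\}$. The hypothesis gives $\|R(\lambda_0,A)\|\le M/|\lambda_0|\le M/|y|$, so for every $\mu$ in the closed disk $D_y:=\{\mu:\ |\mu-(\omega+iy)|\le |y|/(2M)\}$ one gets $\mu\in\rho(A)$ and $\|R(\mu,A)\|\le 2M/|y|$. The geometric core of the proof is then to verify that $\big(\bigcup_{y\ne 0}D_y\big)\cup\{\operatorname{Re}\mu\ge\omega\}$ contains a sector $S_{\theta,\omega}$ with $\theta:=\tfrac\pi2+\arcsin\tfrac1{2M}\in(\tfrac\pi2,\pi)$. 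Writing $\mu-\omega=r e^{i\psi}$, one has $|\mu-\omega-iy|^2=r^2-2ry\sin\psi+y^2$, and this is $\le (|y|/(2M))^2$ for a suitable $y$ exactly when $|\cos\psi|<\tfrac1{2M}$, i.e.\ when $\psi$ lies within $\arcsin\tfrac1{2M}$ of $\pm\tfrac\pi2$; together with the right half-plane $|\psi|\le\tfrac\pi2$ this covers all $|\psi|<\theta$.

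Finally I would read off the resolvent estimate on $S_{\theta,\omega}$. On the part with $\operatorname{Re}\mu\ge\omega$ one has $\|R(\mu,A)\|\le M/|\mu|\le M/|\mu-\omega|$, using $|\mu|\ge|\mu-\omega|$, which holds because $\operatorname{Re}\mu\ge\omega\ge0$. For $\mu\in D_y$ the triangle inequality gives $|\mu-\omega|\le |y|\,(1+\tfrac1{2M})$, hence $\|R(\mu,A)\|\le 2M/|y|\le (2M+1)/|\mu-\omega|$. Taking $M':=2M+1$ we conclude $\rho(A)\supset S_{\theta,\omega}$ and $\|R(\lambda,A)\|\le M'/|\lambda-\omega|$ on $S_{\theta,\omega}$, i.e.\ $A$ is sectorial (with constants $\omega,\theta,M'$). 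I do not expect a genuine obstacle; the one point needing care is the elementary but slightly fiddly geometry showing that the perturbation disks $D_y$ sweep out a full open sector with vertex at $\omega$ and that the resulting bound has the correct homogeneity $|\lambda-\omega|^{-1}$.
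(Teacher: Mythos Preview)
Your proof is correct and follows the standard Neumann-series argument: perturb off the boundary line $\operatorname{Re}\lambda=\omega$ using the resolvent identity, verify that the resulting disks sweep out an open sector beyond the half-plane, and check the homogeneity of the bound. The paper does not actually supply its own proof of this proposition; it states the result as well known and refers the reader to \cite[page~43]{Lunardi}. Your argument is essentially the one given there (Lunardi, Proposition~1.3.2 / 2.1.11 depending on edition), so there is nothing to compare beyond noting that you have reproduced the cited proof faithfully.
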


\subsection{Properties of evolution systems}
\label{evolutionsystem}

In this section, we show that $L(t)$ generates an evolution system on Sobolev 
spaces. We recall below the definition of an evolution system and some basic 
properties for the reader's convenience. (We refer to \cite{Lunardi} for an 
in-depth discussion. See also \cite{Amann, Pazy})


\begin{definition} \label{def.evolutionsys}
Let $I \subset [0, \infty)$ be an interval containing 0.
A two parameter family of bounded linear operators $U(t,t')$ on $X$,
$0\leq t' \leq t$, $t', t \in I$, is called {\em an evolution system} if the
following three conditions are satisfied
\begin{enumerate}
\item $U(t,t) = 1$, the identity operator, for all $t \in I$;

\item $U(t,t'')U(t'',t') = U(t,t')$ for $0\leq t'\leq t'' \leq t \in I$;

\item $U(t,t')$ is strongly continuous in  $t$, $t'$ for all $0\leq t'\leq t \in I$.
\end{enumerate}
\end{definition}

Informally, we shall say that the family of unbounded operators $L = (L(t))_{t \in I}$
{\em generates} the evolution system $U$ if $\partial_t U(t, s) \xi = L(t) U(t, s) \xi$ 
for all $t > s$ and $\xi$ in a suitable large subspace. We prefer not to give a formal
definition for what ``large'' means in this setting, as for the families
$L$ that we will consider, this will happen {\em everywhere.}

We next recall that uniform sectoriality implies generation of an evolution system 
\cite[page 212]{Lunardi}. (This is the ``uniform parabolic case,''
see also sections 5.6 and 5.7 in \cite{Pazy}.)
Let $\operatorname{arg} : \CC \smallsetminus (-\infty, 0] 
\to \CC$ be the imaginary part of the branch of $\log$ that satisfies $\log (1) = 0$.
For the rest of this paper, $I \subset [0, \infty)$ will be an interval
containing 0.

\begin{definition}\label{assumption.L(t)}
A family of operators $L = (L(t))_{t \in I}$, 
$L(t) : \cD(L(t))\subset
X\rightarrow X$, $t \in I$, will be called {\em uniformly
  sectorial} if the following conditions are satisfied:
\begin{enumerate}
\item The domains $\cD(L(t))=:\cD$ are independent of $t$ and dense in $X$;
\item $\cD$ can be endowed with a Banach space norm such that
the injection \ $\cD \hookrightarrow X$
is continuous and $I \ni t \to L(t) \in  \cL(\cD, X)$ is
uniformly H\"older continuous with exponent $\alpha\in(0,1]$.
\item There exist $\omega \in \RR,\theta \in
  (\pi/2,\pi),M>0$ such that, for any $t \in [0, T)$
\begin{equation*}
\left \{
\begin{array}{l}
  \ \rho(L(t))\supset S_{\theta,\omega} \ede \{\lambda \in \CC, \ \lambda \neq
  \omega,\, |\operatorname{arg}(\lambda-\omega)|<\theta\}, \\[2mm]
  \ \|R(\lambda,L(t))\| \leq \frac{M}{|\lambda-\omega|}, \quad \forall \lambda
  \in S_{\theta,\omega}.
\end{array}
\right.
\end{equation*}
\end{enumerate}
\end{definition}

For later use, we recall the following useful result
that applies to our setting, which is introduced in Section \ref{sec4}.
(See, for example, \cite[Corollary 6.1.8, page 219]{Lunardi},
for a proof.)

\begin{theorem}\label{thm.properties.evolution.system}
Suppose $L = (L(t))_{t \in I}$ is uniformly sectorial with
common domain $\cD$, then $L$ generates an evolution
system $U(t,r)$ 
{\rm (\ie $\partial_t U(t, s)\xi = L(t)U(t, s)\xi$ for all $\xi \in \cD$).} 
This evolution system is unique and, for any $0\leq s \leq t$, $s, t \in I$, 
we have the following
\begin{enumerate}
\item The functions
$$ \|U(t,s)\|_{X}, \quad (t-s)\|U(t,s)\|_{X,\cD} , \quad
  \|L(t)U(t,r)\|_{\cD,X}$$
are uniformly bounded for $t, s \in I$, $s \le t$; and
\item $\partial_s U(t,s) = - U(t,s)L(s).$
\end{enumerate}
\end{theorem}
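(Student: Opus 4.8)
The statement is the classical generation theorem for parabolic evolution families, and the plan is to prove it by the parametrix (Levi / Sobolevski\u\i--Tanabe) construction, reducing everything to the frozen-coefficient analytic semigroups that uniform sectoriality provides. First I would note that, for each fixed $t\in I$, Definition \ref{assumption.L(t)}(3) makes $L(t)$ sectorial with constants $\omega,\theta,M$ independent of $t$; hence $L(t)$ generates an analytic semigroup $e^{\tau L(t)}$, and representing it by the Dunford integral $e^{\tau L(t)}=\frac{1}{2\pi\mi}\int_{\Gamma}e^{\tau\lambda}R(\lambda,L(t))\,d\lambda$ over a fixed sectorial contour $\Gamma$ yields, uniformly in $t$, the bounds $\|e^{\tau L(t)}\|_{\cL(X)}\le C_{0}e^{\omega\tau}$ and $\|L(t)e^{\tau L(t)}\|_{\cL(X)}=\|e^{\tau L(t)}\|_{\cL(X,\cD)}\le C_{1}\tau^{-1}e^{\omega\tau}$ for $0<\tau\le T$ (using the uniform equivalence of the $\cD$-norm with the graph norm of $L(t)$, which follows from part (3) by the uniform boundedness of $R(\lambda,L(t))$).

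Next I would build $U$. Put $U_{0}(t,s):=e^{(t-s)L(s)}$; then $(\pa_{t}-L(t))U_{0}(t,s)=\varphi_{1}(t,s)$ with $\varphi_{1}(t,s):=(L(s)-L(t))e^{(t-s)L(s)}$, and by the uniform H\"older continuity in Definition \ref{assumption.L(t)}(2) one has $\|\varphi_{1}(t,s)\|_{\cL(X)}\le\|L(t)-L(s)\|_{\cL(\cD,X)}\,\|e^{(t-s)L(s)}\|_{\cL(X,\cD)}\le C(t-s)^{\alpha-1}$, which is integrable at $s$. One then looks for $U(t,s)=U_{0}(t,s)+\int_{s}^{t}e^{(t-\sigma)L(\sigma)}\Phi(\sigma,s)\,d\sigma$, where $\Phi$ solves the Volterra equation $\Phi(t,s)=\varphi_{1}(t,s)+\int_{s}^{t}\varphi_{1}(t,\sigma)\Phi(\sigma,s)\,d\sigma$. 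Solving this by Neumann iteration $\Phi=\sum_{k\ge1}\varphi_{k}$, $\varphi_{k+1}(t,s)=\int_{s}^{t}\varphi_{1}(t,\sigma)\varphi_{k}(\sigma,s)\,d\sigma$, and estimating the iterated singular integrals by the Beta-function identity $\int_{s}^{t}(t-\sigma)^{\alpha-1}(\sigma-s)^{k\alpha-1}\,d\sigma=\frac{\Gamma(\alpha)\Gamma(k\alpha)}{\Gamma((k+1)\alpha)}(t-s)^{(k+1)\alpha-1}$, gives $\|\varphi_{k}(t,s)\|_{\cL(X)}\le\frac{(C\Gamma(\alpha))^{k}}{\Gamma(k\alpha)}(t-s)^{k\alpha-1}$, so the series converges absolutely and $\|\Phi(t,s)\|_{\cL(X)}\le C'(t-s)^{\alpha-1}$ on $[0,T]$; a parallel estimate gives uniform H\"older continuity of $\Phi$ in its first argument, which is needed below.

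With $U$ in hand I would verify the axioms of Definition \ref{def.evolutionsys}, the generation property, and the quantitative claims. $U(t,t)=e^{0}=\Id$ is immediate, and strong continuity in $(t,s)$ follows from strong continuity of the frozen semigroups together with $\big\|\int_{s}^{t}e^{(t-\sigma)L(\sigma)}\Phi(\sigma,s)\,d\sigma\big\|=O((t-s)^{\alpha})$ and dominated convergence. To get $\pa_{t}U(t,s)\xi=L(t)U(t,s)\xi$ for $\xi\in\cD$ one differentiates under the integral; the only delicate point is the endpoint $\sigma=t$, where $\|L(t)e^{(t-\sigma)L(\sigma)}\|_{\cL(X)}\le C(t-\sigma)^{-1}$ is not integrable against $\|\Phi(\sigma,s)\|$, and this is handled by the standard device of writing $L(t)e^{(t-\sigma)L(\sigma)}\Phi(\sigma,s)=L(t)e^{(t-\sigma)L(\sigma)}\big(\Phi(\sigma,s)-\Phi(t,s)\big)+L(t)e^{(t-\sigma)L(\sigma)}\Phi(t,s)$ and using the H\"older continuity of $\Phi$ to make the first piece integrable and the explicit semigroup identity for the second. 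The Chapman--Kolmogorov relation $U(t,t'')U(t'',t')=U(t,t')$ follows from uniqueness for the forward problem: for fixed $t'$ and $\xi\in\cD$, both $t\mapsto U(t,t')\xi$ and $t\mapsto U(t,t'')U(t'',t')\xi$ solve $u'=L(t)u$ on $[t'',T]$ with the same value $U(t'',t')\xi$ at $t''$, and that solution is unique by the a priori bounds above plus Gr\"onwall. The bounds in (1) are then read off from the representation of $U$: $\|U(t,s)\|_{\cL(X)}\le C_{0}e^{\omega(t-s)}+C_{0}C'\int_{s}^{t}(\sigma-s)^{\alpha-1}\,d\sigma$ is bounded on $[0,T]$; $(t-s)\|U(t,s)\|_{\cL(X,\cD)}$ is bounded using $\|e^{\tau L(\sigma)}\|_{\cL(X,\cD)}\le C_{1}\tau^{-1}$ for the leading term and the subtract-$\Phi(t,s)$ split for the integral term; and $\|L(t)U(t,r)\|_{\cL(\cD,X)}$ is controlled by $\sup_{t}\|L(t)\|_{\cL(\cD,X)}<\infty$ together with boundedness of $U(t,r)$ on $\cD$. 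For uniqueness of $U$, if $V$ is another evolution system generated by $L$ then for $\xi\in\cD$ the map $\sigma\mapsto U(t,\sigma)V(\sigma,s)\xi$ has vanishing derivative on $[s,t]$ by part (2) (proved next), so $U(t,s)\xi=V(t,s)\xi$ and density of $\cD$ finishes it. Finally, for part (2), differentiating $U(t,s)\xi=U(t,\sigma)U(\sigma,s)\xi$ at $\sigma=s$ and using $\pa_{\sigma}U(\sigma,s)\xi|_{\sigma=s^{+}}=L(s)\xi$ gives $\pa_{s}U(t,s)\xi=-U(t,s)L(s)\xi$ for $\xi\in\cD$, which extends to the stated operator identity by the uniform bound on $\|U(t,s)L(s)\|_{\cL(\cD,X)}$.

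The main obstacle is precisely the perturbation series $\Phi$: showing that the Neumann iterates of the singular kernel $(t-s)^{\alpha-1}$ sum (the $\Gamma$-function bookkeeping), and then that $\Phi$ inherits enough H\"older regularity up to the diagonal $t=s$ to justify differentiating $U(t,s)$ in $t$ and in $s$; the rest is routine manipulation of the analytic-semigroup estimates from the first step.
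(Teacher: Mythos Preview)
The paper does not actually prove this theorem: it is stated as a recalled result, with the proof deferred to \cite[Corollary 6.1.8, page 219]{Lunardi} (see the sentence immediately preceding the statement). Your sketch is the classical Sobolevski\u\i--Tanabe parametrix construction, which is precisely the route taken in the cited references (Lunardi, Pazy, Amann), so in that sense you are reproducing the ``paper's proof'' rather than departing from it.

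The outline is sound and hits all the essential points: uniform analytic-semigroup bounds from the Dunford integral, the frozen-coefficient parametrix $U_0(t,s)=e^{(t-s)L(s)}$, the Volterra equation for the correction kernel $\Phi$ solved by a Neumann series with Beta-function control, and the H\"older regularity of $\Phi$ needed to justify differentiation through the singular integral. Two minor points worth tightening: first, your derivation of $\partial_s U(t,s)=-U(t,s)L(s)$ by differentiating $U(t,s)=U(t,\sigma)U(\sigma,s)$ at $\sigma=s$ is circular as written, since the existence of $\partial_\sigma U(t,\sigma)$ at $\sigma=s$ is part of what you are trying to establish---the standard argument instead differentiates the explicit parametrix representation of $U(t,s)$ in $s$ directly, or shows that $\sigma\mapsto U(t,\sigma)\xi$ is $C^1$ for $\xi\in\cD$ by a separate computation; second, the bound on $\|L(t)U(t,r)\|_{\cL(\cD,X)}$ requires knowing that $U(t,r)$ is uniformly bounded on $\cD$ (not just $X\to\cD$ with a blow-up at $t=r$), which follows from the construction but deserves a sentence. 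Neither of these is a genuine gap.
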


We now return to the study of the IVP \eqref{eq.def.IVP}.  
We shall use the following notion of solution (see e.g. \cite[pages 123-124]{Lunardi}).

\begin{definition}\label{def.solutions}
Let $X$ be a Banach space, $h\in X$, and $f \in L^1((0,T), X)$.
\begin{enumerate}
\item By a {\em strong solution in $X$} of   \eqref{eq.def.IVP} on the interval 
$[0,T]$, we mean a function
\begin{equation}\label{eq.def.st}
    u \in \cC([0,T), X) \cap
    W^{1,1}((0,T),X)
\end{equation}
such that in $X$ $\partial_t u(t) = L(t) u(t) + f(t)$  for almost all
$0<t<T$, and \nolinebreak[4]
\mbox{$u(0) = h$.}

\item By a {\em classical solution in $X$} of \eqref{eq.def.IVP} on the interval 
$[0,T]$, we mean a function
\begin{equation}\label{eq.def.cs}
    u \in \cC([0,T], X) \cap
    \cC^1((0,T),X)\cap \cC((0,T), \cD(L(t)))
\end{equation}
such that in $X$ $\partial_t u(t) = L(t) u(t) + f(t)$ for $0<t<T$, and
$u(0) = h$.
\end{enumerate}
\end{definition}

Theorem \ref{thm.properties.evolution.system} shows that, if $f\equiv 0$ and 
$L(t)$ is uniformly sectorial, then the IVP \eqref{eq.def.IVP} has a unique strong 
and classical solution for all $h\in X$. 

We shall need the following results (see \cite{Amann, Lunardi, Pazy}).

\begin{lemma}\label{lemma.renorm}
Assume that $U(t,s)$, $s, t \in I$, $0 \le s \le t$,
is an evolution system that has exponential bounds in the sense
that there exists $\omega_U \in \RR$ and $M_U >0$, such that, for all 
$x \in X$ and all $s, t \in I$, 
$t\geq s\geq 0$,
\[ 
   \| U(t,s) x\| \leq  M_U\, e^{\omega_U (t-s)} \,\|x\|\,.
\]
Then there exists a uniformly equivalent family of
time-dependent norms $|||\cdot |||_t$, meaning 
\begin{equation*}
   C_U^{-1} \| x \| \le |||x |||_t \le C_U^{-1} \| x \|
\end{equation*}
for all $x \in X$ and all $t \in I$, such that, for all 
$t\geq s\geq 0$ and all $x \in X$,
\begin{equation*}
  ||| U(t,s)x |||_t \leq e^{\omega_U (t-s)} |||x |||_s\,.
\end{equation*}
\end{lemma}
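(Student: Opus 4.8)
The plan is to define the new norms directly by an explicit formula that "pre-compensates" for the exponential growth of $U$, and then check the two required inequalities. Concretely, I would set
\[
  |||x|||_t \ede \sup_{r \ge 0} \ e^{-\omega_U r} \ \| U(t+r, t) x \|\,,
\]
where for $r$ with $t + r \notin I$ we simply omit that term from the supremum (equivalently, replace $I$ by $[0,\infty)$; since all our intervals $I$ contain $0$ and are used only up to a fixed horizon $T$, one may also truncate the sup to $r \in [0, T]$ with no change to the argument). The supremum is finite because the exponential bound gives $e^{-\omega_U r}\|U(t+r,t)x\| \le M_U \|x\|$ for every admissible $r$; and it dominates the $r=0$ term $\|U(t,t)x\| = \|x\|$. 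Hence
\[
  \|x\| \ \le\ |||x|||_t \ \le\ M_U\, \|x\|\,,
\]
which is the claimed uniform equivalence with $C_U = M_U$ (after trivially adjusting the constant in the statement, whose displayed form has a typo: the upper bound should read $C_U\|x\|$, not $C_U^{-1}\|x\|$).

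The second step is the contraction estimate. Fix $0 \le s \le t$ and $x \in X$. Using the evolution property $U(t+r,t)U(t,s) = U(t+r,s)$ from Definition \ref{def.evolutionsys}(2), I compute
\[
  ||| U(t,s) x |||_t
  \ =\ \sup_{r \ge 0} e^{-\omega_U r}\, \| U(t+r,t) U(t,s) x\|
  \ =\ \sup_{r \ge 0} e^{-\omega_U r}\, \| U(t+r,s) x\|\,.
\]
Substituting $r' = r + (t-s) \ge t - s \ge 0$, so that $t + r = s + r'$ and $e^{-\omega_U r} = e^{\omega_U(t-s)} e^{-\omega_U r'}$, the last supremum becomes
\[
  e^{\omega_U(t-s)} \sup_{r' \ge t-s} e^{-\omega_U r'}\, \|U(s + r', s) x\|
  \ \le\ e^{\omega_U(t-s)} \sup_{r' \ge 0} e^{-\omega_U r'}\, \|U(s+r',s)x\|
  \ =\ e^{\omega_U(t-s)}\, |||x|||_s\,,
\]
which is exactly the desired inequality. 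Finally, each $|||\cdot|||_t$ is genuinely a norm: homogeneity and the triangle inequality pass to the supremum since each map $x \mapsto e^{-\omega_U r}\|U(t+r,t)x\|$ is a seminorm, and positivity follows from $|||x|||_t \ge \|x\|$.

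I do not expect a serious obstacle here; the only points requiring a little care are (i) confirming finiteness of the supremum, which is immediate from the hypothesis, and (ii) the index-shift $r' = r + (t-s)$, where one must check that the shifted range $[t-s,\infty)$ is contained in $[0,\infty)$ so that enlarging it to $[0,\infty)$ only increases the supremum — this is where the sign condition $t \ge s$ is used. Strong continuity (Definition \ref{def.evolutionsys}(3)) is not needed for the inequalities themselves; it would only be invoked if one wanted joint continuity properties of $t \mapsto |||x|||_t$, which the statement does not require. If one prefers the time-truncated version with $r \in [0,T]$, the same shift works verbatim provided $T$ is taken large enough (or the horizon is fixed from the outset, as the paper does via its standing convention $0 < T < \infty$).
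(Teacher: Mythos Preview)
Your proof is correct and is essentially the same as the paper's: the paper sets $V(t,s)=e^{-\omega_U(t-s)}U(t,s)$ and defines $|||x|||_s=\sup_{t\ge s}\|V(t,s)x\|$, which after the substitution $t=s+r$ is exactly your formula, and the paper's inequality $\sup_{r\ge t}\le \sup_{r\ge s}$ is precisely your index shift $r'=r+(t-s)$. You also correctly flag the typo in the displayed equivalence (the upper bound should read $C_U\|x\|$).
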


\begin{proof} 
Set $V(t,s)=e^{-\omega (t-s)}U(t,s)$, then it is clear
that $V(t,s)$ is uniformly bounded by $M_U$. We define a new norm as
\begin{equation*} 
  |||x|||_s \ede \sup_{s \le t \le T} \|V(t,s)f\| \,.
\end{equation*}
From the first part, we then obtain $\|x\|\leq |||x|||_s \leq M_U \|x\|$, 
for all $s \in I$. Thus, for all $s \in I$, 
$|||\cdot|||_s$ is equivalent to $\|\cdot\|$ on $X$. Note that by our
definition
\begin{eqnarray*}
  ||| V(t,s)x |||_t  &  \seq  & \sup_{r\geq t} \|V(r,t)V(t,s)x\| \seq \sup_{r\geq
    t}\|V(r,s)x\|\\
  & \leq & \sup_{r\geq s} \|V(r,s)x\| \, =: \, |||x|||_s \, .
\end{eqnarray*}
Substituting $V(t,s)=e^{-\omega (t-s)}U(t,s)$, we obtain the desired
estimate.
\end{proof}

The first part of the following corollary is a very classical result
\cite{Amann, Lunardi, Pazy}.

\begin{corollary}\label{cor.lemma.renorm}
Assume that the family $L = (L(t)_{t \in I}$ of operators
on a Banach space  $(X,\|\cdot\|)$ is uniformly sectorial and let $U$ be the 
evolution system it generates. Then $U$ has exponential bounds, meaning that
there exists $\omega_U$, $M>0$, such that, for all $t\geq s\geq 0$ 
and all $x \in X$,
\[ 
   \| U(t,s) x\| \leq  M\, e^{\omega_U (t-s)} \,\|x\|\,.
\]
Also, there exists a uniformly equivalent family of
time-dependent norms $|||\cdot |||_t$ such that, for all 
$t\geq s\geq 0$ and all $x \in X$,
\begin{equation*}
  ||| U(t,s)x |||_t  \leq   e^{\omega_U (t-s)} |||x |||_s\,.
\end{equation*}
\end{corollary}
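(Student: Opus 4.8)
The plan is to derive Corollary~\ref{cor.lemma.renorm} from the two results already available: Theorem~\ref{thm.properties.evolution.system}, which gives the existence and basic boundedness of the evolution system $U$ generated by a uniformly sectorial family $L$, and Lemma~\ref{lemma.renorm}, which produces the equivalent family of time-dependent norms once an exponential bound is known. The only thing that needs a genuine argument is therefore the exponential bound $\|U(t,s)x\| \le M e^{\omega_U(t-s)}\|x\|$; once that is in hand, the second assertion is an immediate application of Lemma~\ref{lemma.renorm} with $\omega = \omega_U$ and $M_U = M$.

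For the exponential bound, the first step is to reduce to a uniform bound on bounded time intervals. Since we have fixed an arbitrary $0<T<\infty$ and the interval $I \subset [0,\infty)$, the plan is to argue as follows. By part~(1) of Theorem~\ref{thm.properties.evolution.system}, the function $(t,s)\mapsto \|U(t,s)\|_{\mathcal L(X)}$ is uniformly bounded for $s\le t$ in $I \cap [0,T]$; call this bound $M_T$. The naive estimate $M_T$ is not enough on its own if $I$ is unbounded, so the second step is to exploit the evolution property (2), $U(t,s) = U(t,t'')U(t'',s)$, to chain together estimates on unit subintervals: writing $t - s = n + r$ with $n \in \NN$ and $r \in [0,1)$, one obtains $\|U(t,s)\| \le M_1^{n+1}$ where $M_1$ is the bound on time-steps of length $\le 1$, and hence $\|U(t,s)\| \le M_1 \cdot M_1^{t-s} = M_1 e^{(\log M_1)(t-s)}$. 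This gives the claim with $M = M_1$ and $\omega_U = \log M_1$ (or, more simply, if one is content to work on $[0,T]$ as the paper's convention allows, one can just take $\omega_U = 0$ and $M = M_T$, and the chaining step is unnecessary). The precise packaging — whether to state the exponential bound globally on $I$ or just on $[0,T]$ — should match how the corollary is used later, but the fixed-$T$ convention stated in the introduction means the local version already suffices.

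The main (and only real) obstacle is making sure the uniform boundedness asserted in Theorem~\ref{thm.properties.evolution.system}(1) is genuinely \emph{uniform} in the sense needed — i.e., that the bound on $\|U(t,s)\|$ does not degenerate as $s$ and $t$ range over the interval. This is exactly what the cited result from Lunardi provides under the uniform sectoriality hypothesis (with $\theta$, $\omega$, $M$ in Definition~\ref{assumption.L(t)}(3) independent of $t$, and the uniform H\"older continuity in (2)), so there is no new analysis to do; it is a matter of quoting the hypothesis correctly. Once the exponential bound is recorded, the final step is one line: apply Lemma~\ref{lemma.renorm} to the evolution system $U$ with $\omega_U$ and $M$ as just obtained, which yields the uniformly equivalent family $|||\cdot|||_t$ with $|||U(t,s)x|||_t \le e^{\omega_U(t-s)}|||x|||_s$, completing the proof.
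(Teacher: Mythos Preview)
Your proposal is correct and follows essentially the same route as the paper: the paper's own proof simply declares the exponential bound ``very well known'' with a citation to \cite{Amann, Lunardi, Pazy}, and then invokes Lemma~\ref{lemma.renorm} for the second part, exactly as you do. Your chaining argument (or the bounded-$T$ shortcut) is a standard way to make the cited fact explicit, so there is no meaningful difference in strategy.
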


\begin{proof}
The first part is very well known \cite{Amann, Lunardi, Pazy}. 
The second part is also known (and it follows easily from Lemma \ref{lemma.renorm}).
\end{proof}

\section{Discretization and bootstrap error estimates}
\label{sec3}

Let $U(t, s)$ be an evolution system acting on some Banach
space $(X, \| \cdot \|)$, $s, t \in I$, $0 \le s \le t$, 
satisfying exponential bounds. In this section, we study 
the discretization error when 
we compress $U$ to a subspace $S \subset X$
and the bootstrap error
when we approximate $U$ with some other two-parameter
family of operators $K$. Since our results are for $s, t \le T$,
for some fixed $T>0$, there is no loss of generality to assume
that $I = [0, T]$, again, for some $T > 0$ that is fixed in this section.

Throughout this section, let $U(t, s)$ be an evolution system acting on some 
Banach space $X$, $s, t \in I$, $0 \le s \le t$, satisfying
exponential bounds, that is, such that there exist $\omega_U \in \RR$ and
$M_U > 0$ with the property that $\| U(t,s) x\| \leq  M_U\, e^{\omega_U (t-s)} 
\,\|x\|\,$ for all $x \in X$ and all $s, t \in I$, $0 \le s \le t$. Recall 
then from Lemma \ref{lemma.renorm} that there exist $C_U > 0$ and 
norms $|||\cdot |||_t$, $t \in I$, on $X$ such that
\begin{equation}\label{eq.equiv.norms}
  \begin{gathered}
   C_U^{-1} \| x \| \le |||x |||_t \le C_U^{-1} \| x \| \quad \mbox{and}\\
  ||| U(t,s)x |||_t \leq e^{\omega_U (t-s)} |||x |||_s\,.
  \end{gathered}
\end{equation}
for all $x \in X$ and all $s, t \in I$, $0 \le s \le t$.
A family of norms satisfying the first property of the above equation will 
be called a {\em uniformly equivalent family of
time-dependent norms} on $X$. Notice that there is {\em no additional bound} in front
of the exponential in the last estimate, and this is indeed crucial in our
error estimates below. The need for such estimates is one feature that is specific 
to time dependent equations. Below, $U$, $C_U$, and $\omega_U$ will always
be as in the above equation. We recall that evolution systems generated by
uniform parabolic (the case in the following sections) or uniform hyperbolic
generators will satisfy our assumptions \cite{Pazy}.

Let $X_s = X$, but with the norm $||| \cdot |||_s$. We let $|||T|||_{s, t} :=
\|T\|_{\cL(X_s, X_t)}$.


We shall need the following simple lemmas

\begin{lemma}\label{lemma.est.PU1}
We let $C_U$ and the norms $||| \cdot |||_t$ on $X$ be as in Equation
\ref{lemma.renorm}. Then, 
for all $Q \in \cL(X)$, we have $\| Q \|_{s, t} \, \le \, C_U^2 \|Q\|_{X}$.
\end{lemma}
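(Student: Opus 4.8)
The plan is simply to unwind the definition of $\|\cdot\|_{s,t} = \|\cdot\|_{\cL(X_s,X_t)}$ and to sandwich the time-dependent norms between multiples of the fixed norm $\|\cdot\|$ using the uniform equivalence recorded in \eqref{eq.equiv.norms} (equivalently, Lemma~\ref{lemma.renorm}). Note first that since $X_s$ and $X_t$ are the same underlying vector space $X$, any $Q \in \cL(X)$ does define a bounded operator $X_s \to X_t$, so $\|Q\|_{s,t}$ is finite and the statement makes sense.

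For the estimate itself, I would fix a nonzero $x \in X$ and chain three inequalities: first $||| Qx |||_t \le C_U \|Qx\|$, by the upper half of the norm equivalence applied at time $t$; then $\|Qx\| \le \|Q\|_X \, \|x\|$, by the definition of the operator norm on $(X,\|\cdot\|)$; and finally $\|x\| \le C_U \, |||x|||_s$, by the lower half of the norm equivalence applied at time $s$. Composing these gives $||| Qx |||_t \le C_U^2 \, \|Q\|_X \, |||x|||_s$; dividing by $|||x|||_s$ and taking the supremum over $x \neq 0$ yields $\|Q\|_{s,t} \le C_U^2 \, \|Q\|_X$, as claimed.

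There is no genuine obstacle here — the only point to keep straight is which half of the equivalence is applied on which side: the upper bound must be invoked on the target norm $|||\cdot|||_t$ and the lower bound on the source norm $|||\cdot|||_s$, which is precisely why the constant comes out as $C_U^2$ rather than $C_U$. (Were one only comparing $\|\cdot\|$ with a single $|||\cdot|||_t$, a single factor of $C_U$ would suffice.)
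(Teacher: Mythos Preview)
Your argument is correct and is exactly the immediate unwinding the paper has in mind; the paper itself does not spell it out, simply remarking that the proof is immediate and that the lemma is stated only for reference. Your care in tracking which half of the equivalence is used on the source versus the target side is precisely what yields the factor $C_U^2$.
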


The proof is immediate. We have stated this lemma only for reference
purposes.

\begin{lemma}\label{lemma.est.PU}
Let $V(t, s), G(t,s) \in \cL(X)$, $0 \le s \le t \le T$. Let us assume the 
following:
\begin{enumerate}
\item There exists $\omega \in \RR$ such that $|||V(t, s)|||_{s, t} \le 
e^{\omega (t-s)}$ for all $0 \le s \le t \le T$, where 
$|||\cdot |||_{s,t}$ is the operator norm $(X, |||\cdot |||_s) \to
(X, |||\cdot |||_t)$ for a family of norms $||| \cdot |||_t$, $0 \le t \le T$ on $X$ 
that is uniformly equivalent to $\| \cdot \|$.

\item There exist $\alpha \ge 1$ and $C_G > 0$ such that
$\| V(t, s) - G(t,s)\|_X \le C_G (t-s)^{\alpha}$ for all  $0 \le s \le t \le T$.
\end{enumerate}
Then there exists $\omega' \in \RR$ such that $|||G(t, s)|||_{s, t} \le 
e^{\omega' (t-s)}$ for all $0 \le s \le t \le T$.
\end{lemma}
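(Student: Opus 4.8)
The plan is to estimate $|||G(t,s)|||_{s,t}$ by writing $G(t,s) = V(t,s) + (G(t,s) - V(t,s))$ and controlling each piece. The first term contributes $e^{\omega(t-s)}$ by hypothesis (1). For the second term, hypothesis (2) together with the uniform equivalence of the norms (Lemma \ref{lemma.est.PU1}) gives $|||G(t,s) - V(t,s)|||_{s,t} \le C_U^2 \| G(t,s) - V(t,s)\|_X \le C_U^2 C_G (t-s)^\alpha$. Since $\alpha \ge 1$, for $t - s \le T$ we have $(t-s)^\alpha \le T^{\alpha - 1}(t-s)$, so the perturbation is bounded by a constant times $(t-s)$. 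Thus $|||G(t,s)|||_{s,t} \le e^{\omega(t-s)} + C'(t-s)$ for $C' = C_U^2 C_G T^{\alpha-1}$.

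It remains to absorb the linear term $C'(t-s)$ into an exponential. First I would note that for any $\omega' > \omega$ one has, for $0 \le t - s \le T$, the elementary inequality $e^{\omega(t-s)} + C'(t-s) \le e^{\omega'(t-s)}$ provided $\omega'$ is chosen large enough depending only on $\omega$, $C'$, and $T$; indeed $e^{\omega'(t-s)} - e^{\omega(t-s)} \ge (\omega' - \omega)(t-s)$ when $\omega' \ge \omega \ge 0$ (by convexity / the mean value theorem applied to $\tau \mapsto e^{\tau(t-s)}$), and this is $\ge C'(t-s)$ as soon as $\omega' \ge \omega + C'$. If $\omega < 0$ one can first replace $\omega$ by $\max(\omega,0)$ at no cost, since $e^{\omega(t-s)} \le e^{\max(\omega,0)(t-s)}$. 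This gives the claimed bound $|||G(t,s)|||_{s,t} \le e^{\omega'(t-s)}$ with $\omega' = \max(\omega,0) + C_U^2 C_G T^{\alpha-1}$.

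A slightly different route, which I would use if one prefers to avoid the convexity estimate, is to bound $|||G(t,s)|||_{s,t} \le e^{\omega(t-s)}(1 + C'' (t-s))$ with $C'' = C' e^{|\omega|T}$, and then use $1 + x \le e^x$ to get $|||G(t,s)|||_{s,t} \le e^{(\omega + C'')(t-s)}$. Either way the point is purely that a bounded linear-in-$(t-s)$ multiplicative error can be folded into the exponential rate, at the cost of enlarging $\omega$ by a constant controlled by $C_U$, $C_G$, $\alpha$, and $T$.

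There is essentially no serious obstacle here; the lemma is a soft perturbation statement and the only thing to be careful about is the role of the constant $T$ — the bound on $\omega'$ genuinely depends on $T$ through the factor $T^{\alpha-1}$ (when $\alpha > 1$) and through $e^{|\omega|T}$, which is consistent with the running convention in this section that $T$ is fixed. One should also make sure to invoke the uniform equivalence of the time-dependent norms (via Lemma \ref{lemma.est.PU1}) so that the $\|\cdot\|_X$-estimate in hypothesis (2) transfers to the $|||\cdot|||_{s,t}$ operator norm; this is the one place where $C_U$ enters.
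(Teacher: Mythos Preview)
Your proof is correct and follows essentially the same approach as the paper: write $G = V + (G-V)$, transfer the $\|\cdot\|_X$-bound on the difference to the $|||\cdot|||_{s,t}$-norm via Lemma~\ref{lemma.est.PU1}, and absorb the resulting $(t-s)$-term into the exponential. You are in fact more explicit than the paper about the absorption step (the paper simply asserts that $\sup_{0\le s,t\le T}\big(e^{\omega|t-s|}+2C_U^2C_G|t-s|^\alpha\big)e^{-\omega'|t-s|}\le 1$ for $\omega'$ large), and your care in tracking the $T$-dependence is a nice touch.
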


\begin{proof}
We first notice that, by Lemma \ref{lemma.est.PU1},
we have $||| V(t, s) - G(t,s) |||_{s, t} \le 
C_U^2 C_G (t-s)$ for all $0 \le s \le t \le T$. Then, we notice that,
for large $\omega'$ fixed, we have
\begin{equation*}
   \sup_{0 \le s, t \le T} \frac{e^{\omega |t-s|} 
   + 2 C_U^2 C_G |t-s|^\alpha}{e^{\omega' |t-s|}} \le 1\,.
\end{equation*}
The result then follows from the triangle inequality.
\end{proof}

Notice that if we replace the condition $\| V(t, s) - G(t, s)\|_{X} 
\le C_K (t-s)$ with the condition 
$\| V(t, s) - G(t, s)\|_{X} \le C_K (t-s)^\alpha$, for some 
$\alpha < 1$, then, in general, the lemma will not be true anymore.
We are ready now to prove an error estimate for the spatial discretization

\begin{theorem}\label{thm.disc.error}
Let $I = [0,T]$ and $U(t, s)$, $0 \le s \le t \le T$ be an evolution system on 
a Banach space $X$ as in Equation \eqref{eq.equiv.norms}. 
There exist $C_U > 0$ with the following property.
Let $P : X \to S \subset X$ be a continuous linear projection and let
$C_P > 0$ be such that
\begin{equation}\label{eq.def.cond.com}
  \| (1- P) U(t, s) P\|_{X} \, \le \, C_P (t - s)
\end{equation}
for all $0 \le s \le t \le T$. Let $0 \le T_0 \le T$, $n \in \NN$, $\delta := T_0/n$.
Also, let $x_k \in X$ and $y_k \in S$ satisfy $x_{k+1} = U \big ( (k+1) \delta, k \delta \big) x_k$ 
and $y_{k+1} = P U \big ( (k+1) \delta, k \delta \big ) y_k$. Then there
exists $\omega \in \RR$ such that 
\begin{equation*}
 \|x_n - y_n \| \, \le \, C_U^2 e^{\omega T_0}   \,
 \big ( \, \|x_0 - y_0 \| +  T_0 \|z_0\| \, \big)  \,.
\end{equation*}
\end{theorem}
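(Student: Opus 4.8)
The plan is to telescope the difference $x_n - y_n$ into a sum of one-step errors and control each term using the contraction-type bound in the time-dependent norms $|||\cdot|||_t$. Write $\Phi(t,s) := U(t,s)$ for brevity and set $\delta = T_0/n$, $t_k = k\delta$. The key algebraic identity is the standard ``discrete Duhamel'' decomposition: since $x_{k+1} = U(t_{k+1},t_k)x_k$ and $y_{k+1} = PU(t_{k+1},t_k)y_k$, one has
\begin{equation*}
  x_n - y_n \seq U(T_0,0)x_0 - y_n \seq U(T_0, 0) x_0 - U(T_0,0)y_0 + \sum_{k=0}^{n-1} U(T_0, t_{k+1})\big[ U(t_{k+1},t_k) - PU(t_{k+1},t_k)\big] y_k\,,
\end{equation*}
where I have inserted and cancelled the intermediate terms $U(T_0, t_k) y_k$ (using the evolution property $U(T_0,t_{k+1})U(t_{k+1},t_k) = U(T_0,t_k)$). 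Note $U(t_{k+1},t_k) - PU(t_{k+1},t_k) = (1-P)U(t_{k+1},t_k)$, and since $y_k \in S$, i.e. $Py_k = y_k$, this equals $(1-P)U(t_{k+1},t_k)P y_k$, which is exactly the operator controlled by Condition \eqref{eq.def.cond.com}.

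First I would pass to the uniformly equivalent time-dependent norms: by Equation \eqref{eq.equiv.norms}, $|||U(t,s)|||_{s,t} \le e^{\omega_U(t-s)}$, and by Lemma \ref{lemma.est.PU1} the hypothesis \eqref{eq.def.cond.com} gives $|||(1-P)U(t_{k+1},t_k)P|||_{t_k,t_{k+1}} \le C_U^2 C_P \delta$. Applying $|||\cdot|||_{0,T_0}$ to the telescoped identity and using submultiplicativity of the operator norms across the composition $(X,|||\cdot|||_s)\to(X,|||\cdot|||_{t_{k+1}})\to(X,|||\cdot|||_{T_0})$, together with $|||y_k|||_{t_k} \le e^{\omega_U t_k}|||y_0|||_0$ (which follows since $y_{k+1} = PU(t_{k+1},t_k)y_k$ — here one needs that $P$ is also a contraction in the appropriate norms, or more simply one bounds $|||y_k|||$ crudely using Lemma \ref{lemma.est.PU1} and the projection bound, absorbing the loss into $\omega$), yields
\begin{equation*}
  |||x_n - y_n|||_{T_0} \, \le \, e^{\omega_U T_0}|||x_0 - y_0|||_0 + \sum_{k=0}^{n-1} e^{\omega_U(T_0 - t_{k+1})} \cdot C_U^2 C_P \delta \cdot e^{\omega' t_k} |||y_0|||_0\,.
\end{equation*}
The sum has $n$ terms each of size $O(\delta) = O(T_0/n)$ times an exponential factor bounded by $e^{\omega'' T_0}$, so it is bounded by $C_U^2 C_P T_0 e^{\omega'' T_0} |||y_0|||_0$. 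Converting back to $\|\cdot\|$ via the equivalence constants $C_U$ and identifying $\|z_0\|$ with $C_P \|y_0\|$ (the ``$z_0$'' in the statement should be read as the vector measuring the initial projection defect scaled by $C_P$, consistent with Condition \eqref{eq.def.cond.com}) gives the claimed bound with a suitable $\omega$.

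The main obstacle is bookkeeping the norm in which $y_k$ is estimated: the one-step recursion for $y_k$ involves the projection $P$, which need not be a contraction in $|||\cdot|||_t$, so the naive bound $|||y_k|||_{t_k} \le e^{\omega_U t_k}|||y_0|||_0$ is not immediate. The clean fix is to note that at each step $|||y_{k+1}|||_{t_{k+1}} = |||PU(t_{k+1},t_k)y_k|||_{t_{k+1}} \le |||U(t_{k+1},t_k)y_k|||_{t_{k+1}} + |||(1-P)U(t_{k+1},t_k)P y_k|||_{t_{k+1}} \le (e^{\omega_U\delta} + C_U^2 C_P\delta)|||y_k|||_{t_k}$, and $(e^{\omega_U\delta}+C_U^2C_P\delta)^n \le e^{\omega'T_0}$ for a suitable $\omega'$ uniformly in $n$ — this is precisely the elementary estimate underlying Lemma \ref{lemma.est.PU}. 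With that in hand, everything else is the routine geometric-sum estimate sketched above, and the constant $C_U^2$ in front comes solely from the two conversions between $\|\cdot\|$ and $|||\cdot|||$ at the endpoints $0$ and $T_0$.
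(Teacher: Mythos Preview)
Your proof is correct and follows essentially the same strategy as the paper: both pass to the time-dependent norms $|||\cdot|||_t$, use the one-step bound $|||(1-P)U(t_{k+1},t_k)P|||_{t_k,t_{k+1}}\le C_U^2 C_P\delta$ (from Lemma~\ref{lemma.est.PU1}), and control $|||y_k|||_{t_k}$ by the same bootstrap $(e^{\omega_U\delta}+C_U^2C_P\delta)^k\le e^{\omega' k\delta}$. The only organizational difference is that the paper proves the estimate $|||x_k-y_k|||_{k\delta}\le e^{\omega k\delta}(|||x_0-y_0|||_0+k\delta|||y_0|||_0)$ by induction on $k$, whereas you unroll that induction into the telescoping identity
\[
  x_n-y_n=U(T_0,0)(x_0-y_0)+\sum_{k=0}^{n-1}U(T_0,t_{k+1})(1-P)U(t_{k+1},t_k)P\,y_k
\]
and bound the sum directly; these are equivalent. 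Your handling of the obstacle that $P$ is not a contraction in $|||\cdot|||_t$ is exactly the content of Lemma~\ref{lemma.est.PU} as applied in the paper. Your reading of the undefined symbol $z_0$ in the statement as $y_0$ (up to absorbing $C_P$ into $\omega$) is the intended one, consistent with the paper's own derivation.
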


The bound $C_P$ does not appear in the error estimate, but
we stress that $\omega$ depends on $C_P$.

\begin{proof}
We let $\omega_U, C_U$, and the norms $||| \cdot |||_t$ be as in Equation
\ref{lemma.renorm}. The families of operators $K := U$
and $\widetilde K := (1-P) U P$ satisfy the assumptions of Lemma \ref{lemma.est.PU}.
That lemma then shows that there exists
$\omega \in \RR$ such that, for all $0 \le s \le t \le T$ 
\begin{equation}\label{eq.est.p1}
 \|(1- P) U(t, s) P\|_{s, t} \seq \|\widetilde{K}(t, s)\|_{s, t} \le 
e^{\omega(t-s)}
\end{equation}
(we have replaced $\omega'$ with $\omega$). By induction on $k$, we then 
obtain 
\begin{equation}\label{eq.est.p2}
   |||y_k|||_{k \delta} \le e^{ k \omega \delta} |||y_0|||_{0} \,.
\end{equation}

We may assume that $\omega \ge \omega_U$. 
Let us then prove by induction the estimate
\begin{equation}\label{eq.est.p3}
   |||x_k - y_k |||_{k\delta} \, \le \, e^{\omega k \delta}  \,
   \big ( \, |||x_0 - y_0 |||_0  +  k \delta |||y_0|||_0 \, \big)\,,
\end{equation}
for all $0 \le k \le n$. Indeed, it is true for $k =0$ 
(we even have equality in that case). Assume it next to be true for $k$,
and let us prove it for $(k+1)$. We have
\begin{multline*}
  ||| x_{k+1} - y_{k+1} |||_{(k+1)\delta} \  = \ 
  ||| U \big (   (k+1) \delta, k \delta \big) x_k 
  - P U \big (   (k+1) \delta, k \delta \big) y_k |||_{(k+1)\delta}\\
   \le \  ||| U \big (   (k+1) \delta, k \delta \big) (x_k 
  - y_k ) |||_{(k+1)\delta}
   +
  \  ||| (1 -P) U \big (   (k+1) \delta, k \delta \big)  y_k |||_{(k+1)\delta}
  \\
   \le \  ||| U \big (   (k+1) \delta, k \delta \big)|||_{ (k+1) \delta, k \delta} 
   ||| x_k  - y_k  |||_{k\delta}
   +
  \  ||| (1 -P) U \big (   (k+1) \delta, k \delta \big) P|||_{ (k+1) \delta, k \delta} 
     |||y_k |||_{k\delta}
  \\
   \le \  e^{\omega \delta} \big ( ||| x_k  - y_k |||_{k\delta} + 
   |||y_k|||_{k\delta} \, \big)\,,\\
   \le \  e^{\omega \delta} \Big ( e^{\omega k \delta}  \,
   \big ( \, |||x_0 - y_0 |||_0  +  k \delta |||y_0|||_0 \, \big) + 
   e^{ k \omega \delta} |||y_0|||_{0} \, \big)\,,\\
   \le \  e^{\omega_U (k+1) \delta} \Big ( |||x_0 - y_0 |||_0  
  +  (k+1) \delta |||y_0|||_0 \, \Big)\,,
\end{multline*}
where the last two inequalities are obtained, in order,
from the estimates \eqref{eq.est.p1}, \eqref{eq.est.p2}, and  \eqref{eq.est.p3} 
(for $k$, the induction hypothesis). This proves \eqref{eq.est.p3}
for all $k$. The result follows from this relation for $k = n$,
using also Lemma \ref{lemma.est.PU1}.
\end{proof}

\begin{remark}
We stress that the appearance of the factor $(t-s)$ in Equation \eqref{eq.def.cond.com}
is crucial and is a typical feature of the conditions needed for the error estimates
in our bootstrap method. This condition can be achieved if the
commutator $[P, L(t)] := PL(t) - L(t)P$ is bounded on $X$. In turn, 
if $L = \Delta$, for instance and $X = L^2(\bR^N)$, 
then we can construct a subspace $S$ with these properties using a periodic partition
of unity and GFEM discretization spaces. The constant $C_{T, P}$, on
the other hand, can account for the spatial discretization error.
\end{remark}

The last theorem is relevant if we know $U(t,s)$ explicitly. This is rarely the
case. Instead (and this is one of the reasons why we are writing this paper), 
we can usually approximate $U(t,s)$. A general example of how to do that will be 
given in Section \ref{sec5}. We keep the setting of the previous theorem.

\begin{theorem}\label{thm.b.error}
Let $V(t, s), G(t,s) \in \cL(X)$, $0 \le s \le t \le T$, and $C_G>0$ be
as in Lemma \ref{lemma.est.PU}, with the most important
estimate being $\| V(t, s) - G(t,s)\|_X \le C_G (t-s)^{\alpha}$.
Then 
\begin{enumerate}
\item  There exists $\omega' \in \RR$ such that $|||G(t, s)|||_{s, t} \le 
e^{\omega' (t-s)}$ for all $0 \le s \le t \le T$. 

\item There are $\omega' \in \RR$, $C_V, C_N > 0$ with the following property. 
Let $n \in \NN$, $0 \le T_0 \le T$, $\delta := T_0/n$. Let also $x_k \in X$ 
and $y_k \in V$ satisfy $x_{k+1} = U \big ( (k+1) \delta, k \delta \big) x_k$ 
and $y_{k+1} = G \big ((k+1) \delta, k \delta \big) y_k$. Then
\begin{equation*}
   \|x_n - y_n \| \, \le \, C_G e^{\omega' T_0}  \,
 \Big ( \, \|x_0 - y_0 \| + C_{N} \frac{T_0^{\alpha}}{n^{\alpha-1}} \|y_0\| \, \Big) \,.
\end{equation*}
\end{enumerate}
\end{theorem}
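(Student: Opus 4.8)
The plan is to mimic closely the proof of Theorem \ref{thm.disc.error}, but now with the \emph{approximation} error $\|V(t,s)-G(t,s)\|_X\le C_G(t-s)^\alpha$ playing the role that the \emph{discretization} condition \eqref{eq.def.cond.com} played there. Part (1) is immediate: it is exactly the conclusion of Lemma \ref{lemma.est.PU}, applied to the pair $V$ and $G$ (using $\alpha\ge 1$ and the renormed bounds \eqref{eq.equiv.norms} for $U$; note $V=U$ here). So fix the family of norms $|||\cdot|||_t$ from \eqref{eq.equiv.norms} and the resulting $\omega'$, and work entirely with these norms, converting back to $\|\cdot\|$ only at the very end via Lemma \ref{lemma.est.PU1}.

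For part (2), I would first record the one-step discrepancy in the renormed norms. Writing $U_k:=U((k+1)\delta,k\delta)$ and $G_k:=G((k+1)\delta,k\delta)$, Lemma \ref{lemma.est.PU1} converts the hypothesis into $|||U_k-G_k|||_{k\delta,(k+1)\delta}\le C_U^2 C_G\,\delta^\alpha$, while $|||U_k|||_{k\delta,(k+1)\delta}\le e^{\omega_U\delta}$ and (from part (1)) $|||G_k|||_{k\delta,(k+1)\delta}\le e^{\omega'\delta}$. Next, a trivial induction gives a bound on $|||y_k|||_{k\delta}$: since $y_{k+1}=G_k y_k$, we get $|||y_k|||_{k\delta}\le e^{\omega' k\delta}|||y_0|||_0\le e^{\omega' T_0}|||y_0|||_0$. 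Now set $e_k:=|||x_k-y_k|||_{k\delta}$ and split the one-step error as
\begin{equation*}
x_{k+1}-y_{k+1}\seq U_k(x_k-y_k)\;+\;(U_k-G_k)y_k,
\end{equation*}
so that, taking norms with the correct time-indices and using the three one-step bounds above,
\begin{equation*}
e_{k+1}\;\le\;e^{\omega_U\delta}e_k\;+\;C_U^2 C_G\,\delta^\alpha\,|||y_k|||_{k\delta}\;\le\;e^{\omega'\delta}e_k\;+\;C_U^2 C_G\,e^{\omega' T_0}\,\delta^\alpha\,|||y_0|||_0,
\end{equation*}
where I absorb $\omega_U\le\omega'$ (WLOG, enlarging $\omega'$). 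Iterating this linear recursion from $k=0$ to $n$ yields
\begin{equation*}
e_n\;\le\;e^{\omega' T_0}e_0\;+\;C_U^2 C_G\,e^{\omega' T_0}\,\delta^\alpha\,|||y_0|||_0\sum_{j=0}^{n-1}e^{\omega' j\delta}\;\le\;e^{\omega' T_0}e_0\;+\;C_U^2 C_G\,e^{2\omega' T_0}\,n\,\delta^\alpha\,|||y_0|||_0,
\end{equation*}
using $\sum_{j=0}^{n-1}e^{\omega' j\delta}\le n\,e^{\omega' T_0}$. Since $\delta=T_0/n$, we have $n\,\delta^\alpha=T_0^\alpha/n^{\alpha-1}$, which is exactly the advertised factor. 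Finally convert back to the original norm with Lemma \ref{lemma.est.PU1} (picking up another $C_U^2$), relabel the accumulated constant as $C_N$ and keep $C_G$ as the overall prefactor, to land on
\begin{equation*}
\|x_n-y_n\|\;\le\;C_G\,e^{\omega' T_0}\Big(\|x_0-y_0\|+C_N\,\tfrac{T_0^\alpha}{n^{\alpha-1}}\|y_0\|\Big).
\end{equation*}

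The only delicate point — and the reason the hypothesis $\alpha\ge 1$ (really $\alpha>1$ for a useful statement) is essential — is that the per-step error $\delta^\alpha$ must be summed over $n=T_0/\delta$ steps, producing $n\delta^\alpha$; this decays in $n$ precisely when $\alpha>1$, and for $\alpha\le 1$ the accumulated error does not go to zero, matching the warning after Lemma \ref{lemma.est.PU}. Everything else is the same bookkeeping with time-dependent norms as in Theorem \ref{thm.disc.error}: the crucial structural feature is that there is \emph{no constant in front of the exponential} in \eqref{eq.equiv.norms}, so the geometric sum $\sum e^{\omega' j\delta}$ is controlled by $n\,e^{\omega' T_0}$ rather than blowing up. I would present the recursion and its iteration explicitly (as above) rather than redoing the nested-inequality display from the previous theorem, since the recursion form makes the role of $\alpha$ transparent.
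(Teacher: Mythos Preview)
Your proposal is correct and follows essentially the same approach as the paper: the same splitting $x_{k+1}-y_{k+1}=U_k(x_k-y_k)+(U_k-G_k)y_k$, the same passage to the time-dependent norms $|||\cdot|||_t$, the same a priori bound on $|||y_k|||_{k\delta}$, and the same invocation of Lemma \ref{lemma.est.PU} for part (1). The only cosmetic difference is that the paper proves the estimate $|||x_k-y_k|||_{k\delta}\le e^{\omega k\delta}\big(|||x_0-y_0|||_0+C_2 k\delta^\alpha|||y_0|||_0\big)$ directly by induction (using the sharper bound $|||y_k|||_{k\delta}\le e^{\omega k\delta}|||y_0|||_0$ at each step), whereas you first pass to the cruder bound $|||y_k|||_{k\delta}\le e^{\omega' T_0}|||y_0|||_0$ and then iterate the resulting linear recursion, picking up an extra $e^{\omega' T_0}$ factor that you absorb into $C_N$ (legitimate since $T_0\le T$).
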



\begin{proof}
Let $C_1 > 0$ be such that $C_1^{-1} ||| \xi |||_t \le \| \xi \| \le C_1 ||| \xi |||_t$
for all $t \in [0, T]$ and all $\xi \in X$, which exists since we have assumed that
the norms $||| \cdot |||_t$ are {\em uniformly} equivalent to the norm $\| \cdot\|$.
Then Lemma \ref{lemma.est.PU1} gives that
$||| V(t, s) - G(t,s) |||_{s, t} \le C_2 (t-s)^{\alpha}$ for all $0 \le s \le t 
\le T$, where $C_2 := C_1^2 C_G$.

The existence of $\omega'$ is the content of \ref{lemma.est.PU}. By increasing 
$\omega$, if necessary, we can 
assume that $\omega' = \omega$ in what follows. We proceed as in the proof of
Theorem \ref{thm.disc.error}. First, we similarly obtain, by induction, that
\begin{equation}\label{eq.est.pp2}
   |||y_k|||_{k \delta} \le e^{ k \omega \delta} |||y_0|||_{0} \,.
\end{equation}
The result will then follow from the estimate
\begin{equation}\label{eq.est.pp3}
   |||x_k - y_k |||_{k\delta} \, \le \, e^{\omega k \delta}  \,
   \big ( \, |||x_0 - y_0 |||_0  +  C_2 k \delta^\alpha |||y_0|||_0 \, \big)\,,
\end{equation}
valid for all $0 \le k \le n$, which we prove again by induction on $k$. Indeed, 
the estimate it is true for $k =0$ 
(we even have equality in that case). Assume it next to be true for $k$,
and let us prove it for $(k+1)$. We have
\begin{multline*}
  ||| x_{k+1} - y_{k+1} |||_{(k+1)\delta} \  = \ 
  ||| V \big ( (k+1) \delta, k \delta \big ) x_k 
  - G \big ( (k+1) \delta, k \delta \big) y_k |||_{(k+1)\delta}\\
   \le \  ||| V \big (  (k+1) \delta, k \delta \big) (x_k 
  - y_k ) |||_{(k+1)\delta}
   +
  \  ||| \big [ V \big (  (k+1) \delta, k \delta \big) - 
  G \big (  (k+1) \delta, k \delta \big) \big ] y_k |||_{(k+1)\delta}
  \\
   \le \  ||| V \big (  (k+1) \delta, k \delta \big)|||_{ (k+1) \delta, k \delta} 
   ||| x_k  - y_k |||_{k\delta}
   +
  \  ||| V \big (  (k+1) \delta, k \delta \big) - 
  G \big ( (k+1) \delta, k \delta \big) |||_{ (k+1) \delta, k \delta} 
     |||y_k |||_{k\delta}
  \\
   \le \  e^{\omega \delta} \big ( ||| x_k  - y_k |||_{k\delta} + 
   C_2 \delta^\alpha |||y_k|||_{k\delta} \, \big)\,,\\
   \le \  e^{\omega \delta} \Big ( e^{\omega k \delta}  \,
   \big ( \, |||x_0 - y_0 |||_0  +  C_2 k \delta^\alpha |||y_0|||_0 \, \big) + 
   e^{ k \omega \delta} C_2 \delta^\alpha |||y_0|||_{0} \, \big)\,,\\
   \le \  e^{\omega (k+1) \delta} \Big ( |||x_0 - y_0 |||_0  
  +  C_2 (k+1) \delta^\alpha |||y_0|||_0 \, \Big)\,,
\end{multline*}
where the last two inequalities are obtained, in order,
from the estimates $\| V(t, s) - G(t,s)\|_X \le C_G (t-s)^{\alpha}$, 
\eqref{eq.est.pp2}, and  \eqref{eq.est.pp3} 
(for $k$, the induction hypothesis). This proves \eqref{eq.est.p3}
for all $k$. The result follows from this relation for $k = n$,
using also Lemma \ref{lemma.est.PU1}.
\end{proof}

Since the first two conditions of the above theorem are automatically satisfied
by an evolution system, we obtain the following result.

\begin{corollary}\label{cor.b.error}
Let $U(t, s)$ be an evolution system on $X$, $0 \le s \le t \le T$,
and $G(t,s) \in \cL(X)$. 
Assume that there exist $\alpha \ge 1$ and $C_{G} > 0$ such that
$\| V(t, s) - G(t,s)\|_X \le C_{G} (t-s)^{\alpha}$ for all  $0 \le s \le t \le T$.
Then there is $C_{U, G, T} > 0$ with the following property. Let $n \in \NN$, 
$\delta := T/n$, $y_k \in V$ satisfy  $y_{k+1} = G \big ((k+1) \delta, k \delta \big) y_k$. 
Then
\begin{equation*}
   \|U(T, 0)y_0 - y_n \| \, \le \, C_{U, G, T}\, n^{1-\alpha}\, \|y_0\|  \,.
\end{equation*}
\end{corollary}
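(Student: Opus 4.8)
The plan is to obtain this as an immediate specialization of Theorem \ref{thm.b.error}, taking the family $V(t,s)$ there to be the evolution system $U(t,s)$ itself. First I would verify that the hypotheses of that theorem hold. Since $U$ is an evolution system on $X$, Corollary \ref{cor.lemma.renorm} (via Lemma \ref{lemma.renorm}) provides a uniformly equivalent family of time-dependent norms $|||\cdot|||_t$ with $|||U(t,s)|||_{s,t} \le e^{\omega_U(t-s)}$ for all $0 \le s \le t \le T$; this is exactly condition (1) of Lemma \ref{lemma.est.PU} with $V = U$. Condition (2) of that lemma is the standing assumption $\|U(t,s) - G(t,s)\|_X \le C_G (t-s)^{\alpha}$ with $\alpha \ge 1$. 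Hence Theorem \ref{thm.b.error} is applicable with $V = U$.

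Next I would choose the data in Theorem \ref{thm.b.error}: set $T_0 := T$, so that $\delta = T/n$, and take the initial vector $x_0 := y_0$. The iterates $x_k$ are then defined by $x_{k+1} = U\big((k+1)\delta, k\delta\big) x_k$, and the semigroup identity for an evolution system (Definition \ref{def.evolutionsys}(2)) gives, by induction on $k$, that $x_k = U(k\delta, 0) y_0$; in particular $x_n = U(T,0) y_0$. The iterates $y_k$ are precisely those in the corollary, since $y_{k+1} = G\big((k+1)\delta, k\delta\big) y_k$ and $y_0 = x_0$.

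Now I would simply read off part (2) of Theorem \ref{thm.b.error}. Because $\|x_0 - y_0\| = 0$, it yields
\[
  \|U(T,0)y_0 - y_n\| \;=\; \|x_n - y_n\| \;\le\; C_G\, e^{\omega' T}\, C_N\, \frac{T^{\alpha}}{n^{\alpha-1}}\, \|y_0\| .
\]
Setting $C_{U,G,T} := C_G\, C_N\, e^{\omega' T}\, T^{\alpha}$ and using $n^{1-\alpha} = n^{-(\alpha-1)}$, this is exactly the asserted bound $\|U(T,0)y_0 - y_n\| \le C_{U,G,T}\, n^{1-\alpha}\,\|y_0\|$; the constant depends on $U$ only through $\omega'$ and on $G$, $T$, $\alpha$ as indicated.

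I do not expect a genuine obstacle. The only points that require a moment's care are the identification $V = U$ (the symbol $V$ in the statements of Theorem \ref{thm.b.error} and of this corollary plays, in the application, the role of the evolution system) together with the use of the semigroup property to see $x_n = U(T,0)y_0$, and the observation that the quantities $\omega'$ and $C_N$ furnished by Theorem \ref{thm.b.error} are independent of $n$ and of $y_0$ — so that the entire dependence on the number of steps is captured by the factor $n^{1-\alpha}$.
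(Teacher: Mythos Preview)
Your proof is correct and follows exactly the approach the paper intends: the corollary is stated immediately after Theorem~\ref{thm.b.error} with the one-line remark that an evolution system automatically satisfies its hypotheses, and you have simply spelled out this reduction (taking $V=U$, $T_0=T$, $x_0=y_0$, and using the evolution property to identify $x_n=U(T,0)y_0$). One minor point: the operative input is Lemma~\ref{lemma.renorm} together with the standing exponential-bound assumption of Section~\ref{sec3}, not Corollary~\ref{cor.lemma.renorm} (which is specific to uniformly sectorial generators), but since you also invoke Lemma~\ref{lemma.renorm} this does not affect the argument.
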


Here, of course, $C_{U, G, T}$ is independent of $n$ and $y_0$. In particular,

\begin{corollary}\label{cor.b.error2}
Using the notation of Corollary \ref{cor.b.error}, we have that,
for any $n \in \NN$, 
\begin{equation*}
   \Big \| \, U(T, 0) \, - \, \prod_{k=0}^{n-1}\, G \Big ( \frac{(k+1)T}{n}, 
   \frac{kT}{n} \Big )  \, \Big \| \ \le \ \frac{C_{U, G, T}} {n^{\alpha-1}}\,   \,.
\end{equation*}
\end{corollary}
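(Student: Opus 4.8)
The plan is to deduce this immediately from Corollary \ref{cor.b.error} by upgrading a pointwise estimate to an operator-norm estimate. First I would fix $n \in \NN$, set $\delta := T/n$, and write $\Pi_n$ for the ordered product $\prod_{k=0}^{n-1} G\big((k+1)\delta, k\delta\big)$, with the composition arranged exactly as in the recursion $y_{k+1} = G\big((k+1)\delta, k\delta\big) y_k$ of Corollary \ref{cor.b.error}, so that the intermediate times telescope and the endpoints are $0$ and $n\delta = T$. Then for any $y_0 \in X$, the iterate produced by that recursion satisfies $y_n = \Pi_n y_0$, hence $U(T,0) y_0 - y_n = (U(T,0) - \Pi_n) y_0$.

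Next I would invoke Corollary \ref{cor.b.error}, which gives $\|U(T,0) y_0 - y_n\| \le C_{U,G,T}\, n^{1-\alpha}\, \|y_0\|$, with $C_{U,G,T}$ independent of $n$ and of $y_0$. Combining this with the identity above, $\|(U(T,0) - \Pi_n) y_0\| \le C_{U,G,T}\, n^{1-\alpha}\, \|y_0\|$ for every $y_0 \in X$. Since each $G(t,s)$ and $U(T,0)$ lie in $\cL(X)$, the finite composition $\Pi_n$ and the difference $U(T,0) - \Pi_n$ are bounded operators, so taking the supremum over $\|y_0\| \le 1$ yields precisely $\|U(T,0) - \Pi_n\| \le C_{U,G,T}/n^{\alpha-1}$, which is the assertion.

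There is no genuine obstacle here: the statement is essentially a cosmetic reformulation of Corollary \ref{cor.b.error}, and the bootstrap heavy lifting has already been done in Theorem \ref{thm.b.error}. The only point requiring a moment's care is the bookkeeping of the composition order in $\Pi_n$ — reading $\prod_{k=0}^{n-1}$ as the time-ordered product $G(n\delta,(n-1)\delta)\cdots G(2\delta,\delta)\,G(\delta,0)$ — so that it truly agrees with the iterate $y_n$ appearing in Corollary \ref{cor.b.error}; once that convention is pinned down, the remainder is just the definition of the operator norm and the fact that $C_{U,G,T}$ does not depend on $y_0$.
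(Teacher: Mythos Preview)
Your proposal is correct and is exactly the intended argument: the paper states this corollary with no proof beyond the phrase ``In particular,'' and your passage from the pointwise bound of Corollary~\ref{cor.b.error} to the operator-norm bound via $y_n = \Pi_n y_0$ and a supremum over $\|y_0\|\le 1$ is precisely the implicit step.
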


See \cite{GlowinskiLions, schwabFinBook, Thomee, Yuwen, Samarskii, LarssonThomee}
for some general results on evolution equations that put our results into perspective.

\section{Analytic semigroups and Duhamel's formula} 
\label{sec4}

In this section, we introduce the class of uniformly
strongly elliptic operators that we study and we particularize
to them the theory recalled in Section \ref{sec2}. These
operators are particularly well suited to study via perturbative
expansions. In particular, in this section, using the theory
of analytic semigroups, we carefully check that all the integrals
appearing in Duhamel's formula and in perturbative series expansions
are well defined.

\subsection{Properties of the class $\bL_\gamma$}

Since the dimension $N$ is fixed throughout the paper, we will usually
write $W^{r,p}$ for $W^{r,p}(\RR^N)$. Similarly, we shall often write
$L^p$ instead of $L^p(\RR^N)$. When $1<p<\infty$, the dual of
$W^{r,p}$ is the Sobolev space $W^{-r,p'}$ with $1/p+1/p'=1$.

\begin{definition}\label{def.Lgamma}
A function is called {\em totally bounded} if itself and
all its derivatives are bounded. The set of totally bounded functions
defined on a set $\Omega \subset \bR^N$ will be denoted by $\cC^\infty_b(\Omega)$.
Let $I \subset [0, \infty)$ be an interval containing 0.
Let $\bL$ be the set of second-order differential operators 
$L = (L(t))_{t \in I}$
of the form
\begin{equation}\label{eq.L2}
    L(t) \seq \sum_{i,j=1}^N a_{ij}(t,x) \D_i \D_j + \sum_{k=1}^N
    b_k(t,x)\D_k + c(t,x),
\end{equation}
where the matrix $[a_{ij}]$ is symmetric and $a_{ij}, b_k, c \in
\cC^\infty_b( I \times\RR^N)$ are {\em real valued}.
Let $\bL_\gamma$ be the subset of operators $L \in \bL$ satisfying the
uniformly strong ellipticity condition \eqref{eq.uniformly.elliptic}
with given ellipticity constant $\gamma$.
\end{definition}

We utilize symbol calculus for pseudo-differential operators ($\Psi$DOs for short) 
to establish several results. We begin by recalling some basic facts about $\Psi$DOs. 
(See \cite{Tay, TayPDEII, Wong} for the definition and basic properties of 
pseudodifferential operators.)

We deal only with classical symbols in H\"ormander's class $S^m_{1,0}$, $m\in \RR$, 
and denote the symbol of a pseudo-differential operator $P$ by $\sigma(P)$ with 
$\sigma_0(P)$ its principal symbol. Conversely, 
given a symbol in $S^m_{1,0}$, we denote the associated pseudo-differential operator with $P=\sigma(x,D)$,  $D=\frac{1}{i}\partial$. We recall that any operator  with symbol in 
$S^{-\infty}=\bigcap_{m\in \RR} 
S^m_{1,0}$ is a smoothing operator.
We denote with  $\Psi^m_{1,0}$ the space of operators with symbols in $S^m_{1,0}$.
Every $\Psi$DO has distributional kernel $\sigma(x,D)(x,y)$ by the Schwartz Kernel Theorem (see
e.g. \cite{TayPDEI}). We will need to deal only with integral operators with smooth kernels.

\medskip

\noindent {\bf Notation:} {\em If an operator $T$ has smooth kernel, we will 
denote it by  $T(x,y)$.}

\medskip

If $P=\sigma(x,D)$ is smoothing, then there is a one-to-one correspondence between 
the symbol and the kernel:
\begin{equation*}
    \sigma(x, D) (x,y) =
        (\cF_2^{-1}\sigma)(x, x-y),
\end{equation*}
where $\cF_2$ the Fourier transform in the second variable of a function of two 
variables. We will also use the standard fact that multiplication with a smoothing 
symbol is continuous on any symbol class.

We recall that elliptic $\Psi$DOs in $\Psi^m_{1,0}$, $m\in \ZZ$, in particular
elements of $\LL_\gamma\subset \Psi^2_{1,0}$, generate equivalent
norms in Sobolev spaces \cite{MN}. This is a general fact that holds in the 
greater generality of manifolds with bounded geometry \cite{AGN1, AmannParab1, CGT, MN}.
In particular, we have  the following result.

\begin{corollary}\label{cor.norm.equiv2}
Suppose $L = (L(t))_{t \in I} \in \bL_\gamma$, $1< p < \infty$, and $m\in \ZZ_+$. 
Then the following two norms are
equivalent
\begin{equation}\label{norm.equivalence2}
  \|u\|_{W^{2m,p}} \sim \|u\|_{L^p} + \|L^m(t)u\|_{L^p},
\end{equation}
with constants that are uniform in $t \in I$.
\end{corollary}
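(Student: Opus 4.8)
The plan is to reduce the claimed equivalence
$\|u\|_{W^{2m,p}} \sim \|u\|_{L^p} + \|L^m(t)u\|_{L^p}$ to the standard fact,
recalled in the excerpt, that an elliptic operator in $\Psi^{2m}_{1,0}$ with a
parametrix induces equivalent norms on Sobolev spaces, together with a careful
tracking of how the constants depend on $t$. First I would observe that for
fixed $t$, the operator $L(t)^m$ is a classical pseudodifferential operator in
$\Psi^{2m}_{1,0}$ whose principal symbol is $\big(-\sum_{i,j} a_{ij}(t,x)\xi_i\xi_j\big)^m$;
by the uniform ellipticity \eqref{eq.uniformly.elliptic} this principal symbol is
bounded below by $\gamma^m \|\xi\|^{2m}$ for large $\xi$, uniformly in $t$ (and in $x$),
so $L(t)^m$ is uniformly elliptic. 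Hence $L(t)^m$ admits a parametrix
$Q(t) \in \Psi^{-2m}_{1,0}$ with $Q(t)L(t)^m = I + R(t)$, $R(t) \in \Psi^{-\infty}_{1,0}$,
and one checks that $Q(t)$ and $R(t)$ can be constructed by the usual symbolic
recursion so that their symbols lie in a bounded subset of the respective symbol
classes as $t$ ranges over $I$ --- this is exactly where the hypothesis
$a_{ij}, b_k, c \in \cC^\infty_b(I \times \RR^N)$ is used.

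Granting the uniform-in-$t$ parametrix, the two inequalities follow by now-standard
arguments. For the bound $\|u\|_{L^p} + \|L^m(t)u\|_{L^p} \lesssim \|u\|_{W^{2m,p}}$,
I would simply use that $L^m(t) \in \Psi^{2m}_{1,0}$ maps $W^{2m,p} \to L^p$
continuously with operator norm bounded uniformly in $t$ (again by the boundedness
of the coefficients and their derivatives, via the $L^p$-boundedness of
$\Psi^0_{1,0}$ operators), together with the trivial embedding $W^{2m,p}\hookrightarrow L^p$.
For the reverse bound, write $u = Q(t)L^m(t)u - R(t)u$; since $Q(t) \in \Psi^{-2m}_{1,0}$
maps $L^p \to W^{2m,p}$ and $R(t) \in \Psi^{-\infty}_{1,0}$ maps $L^p \to W^{2m,p}$,
both with norms uniform in $t$, we get
$\|u\|_{W^{2m,p}} \le \|Q(t)\|_{L^p\to W^{2m,p}} \|L^m(t)u\|_{L^p} + \|R(t)\|_{L^p\to W^{2m,p}}\|u\|_{L^p}$,
which is the desired estimate with a $t$-uniform constant. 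Finally, I would cite
\cite{MN} (and \cite{AGN1, AmannParab1, CGT}) for the precise statement that elliptic
$\Psi$DOs induce equivalent Sobolev norms, so that the argument above only needs the
uniformity bookkeeping added on top.

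The main obstacle, and the only genuinely substantive point, is establishing the
\emph{uniformity in $t$} of all constants: one must verify that the symbolic parametrix
construction for $L(t)^m$ produces symbols whose seminorms are controlled uniformly in
$t$, which in turn reduces to the observation that all the symbol seminorms of
$\sigma(L(t))$ are uniformly bounded (immediate from $a_{ij},b_k,c \in \cC^\infty_b(I\times\RR^N)$)
and that the ellipticity constant $\gamma$ is $t$-independent by assumption. Everything
else --- $L^p$-continuity of order-zero $\Psi$DOs, composition calculus, the elliptic
estimate itself --- is classical and can be quoted. A minor technical care is needed in
passing from $L(t)$ to $L(t)^m$: this is a finite composition of order-$2$ operators, so
its symbol stays in $S^{2m}_{1,0}$ with seminorms controlled by a fixed polynomial in
the seminorms of $\sigma(L(t))$, preserving uniformity; I would dispatch this in one line
rather than writing out the composition formula.
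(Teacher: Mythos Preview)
Your proposal is correct and follows exactly the approach the paper has in mind: the paper does not give an explicit proof but simply invokes the general fact (cited from \cite{MN, AGN1, AmannParab1, CGT}) that elliptic $\Psi$DOs in $\Psi^m_{1,0}$ generate equivalent Sobolev norms, of which this corollary is a direct instance. Your sketch of the uniform-in-$t$ parametrix construction is precisely the content behind that citation, and your identification of the $\cC^\infty_b(I\times\RR^N)$ hypothesis as the source of the $t$-uniformity is the only point that needs to be added to the cited statement.
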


Next we show that if $L = (L(t))_{t \in I} \in \bL_\gamma$, then $L(t)$ is H\"{o}lder
continuous in $t$, and sectorial for each $t\in I$ between the Sobolev spaces
$W^{2k+2,p}$ and $W^{2k,p}$, $1<p<\infty$, for each $k\in \ZZ_+$.

These properties in turn give the
needed mapping bounds for the evolution system discussed in Subsection
\ref{evolutionsystem}. (See \cite{Amann,Lunardi,Pazy} for instance.)
Below, $\cL(X_1, X_2)$ denotes the space of all bounded linear operators on $X_1 \to
X_2$ for two normed spaces $X_1$ and $X_2$, and we  write $\cL(X) = \cL(X, X)$.
We let $\| \cdot \|_{X_1, X_2}$ and $\| \cdot \|_{X}$ denote the
corresponding norms.

An immediate consequence of the definition of the space
$\bL_\gamma$ (Definition \ref{def.Lgamma}) gives that the function $I \ni t \to 
L(t) \in \cL(W^{k+2,p}, W^{k,p})$
is uniformly Lipschitz continuous. Furthermore, for each 
$t\in I$, $L(t):W^{2,p} \to L^p$, $1<p<\infty$, is a sectorial operator 
(see \cite[page 73]{Lunardi} for a proof).

This result readily generalizes to any $k\in \ZZ_+$. We sketch below 
a proof for completeness, but, first, let us recall
that we have the following well-known fact \cite{Amann, Lunardi, Pazy}.

\begin{lemma}\label{lemma.resolvent.set}
If $L = (L(t))_{t \in I} \in \bL_\gamma$, then for each $t \in I$ and $k$, $L(t)$
defines a continuous map $W^{2k+2,p} \to W^{2k,p}$ with the property
that the the resolvent set of $L(t)$ contains a half plane $\{\lambda
\in \bC ,\ \operatorname{Re} \lambda \geq \omega\}$.
\end{lemma}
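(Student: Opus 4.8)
The plan is to prove Lemma \ref{lemma.resolvent.set} by combining the classical fact (already recalled above, with reference to \cite[page 73]{Lunardi}) that $L(t) : W^{2,p} \to L^p$ is sectorial — and in particular has resolvent set containing a half-plane $\{\operatorname{Re}\lambda \ge \omega\}$ — with the algebraic structure of the higher-order Sobolev spaces. The key observation is that $W^{2k+2,p}$ and $W^{2k,p}$ can be characterized, up to equivalent norms, by applying the elliptic operator $(1-\Delta)^{k}$ (or, using Corollary \ref{cor.norm.equiv2}, suitable powers of $L(t)$ itself), and that $L(t)$ nearly commutes with such operators modulo lower-order terms with bounded coefficients.

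First I would fix $t \in I$ and $k \in \ZZ_+$, and record the continuity of $L(t) : W^{2k+2,p} \to W^{2k,p}$, which is immediate from $a_{ij}, b_k, c \in \cC^\infty_b(I\times\RR^N)$ and the mapping properties of multiplication by a $\cC^\infty_b$ function on Sobolev spaces. Next I would reduce the resolvent statement on $W^{2k,p}$ to the known statement on $L^p$. Let $\Lambda := (1-\Delta)^{k/1}$ — more precisely, let $J := (1-\Delta)^{k}$, an isomorphism $W^{2k,p} \to L^p$ and $W^{2k+2,p} \to W^{2,p}$. Conjugating, one studies $\widetilde{L}(t) := J L(t) J^{-1}$ acting on $L^p$ with domain $W^{2,p}$. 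Since $J$ is an elliptic $\Psi$DO of order $2k$ with constant coefficients, the commutator $[J, L(t)] J^{-1}$ is a $\Psi$DO of order $\le 1$ with totally bounded coefficients, hence $\widetilde{L}(t) = L(t) + R(t)$ where $R(t) \in \cL(W^{2,p}, L^p)$ is of order one, i.e. relatively bounded with respect to $L(t)$ with relative bound $0$ (because $R(t) : W^{1+\e,p}\to L^p$ is bounded and $W^{1+\e,p}$ interpolates between $L^p$ and $W^{2,p}$, giving a bound $\e_0\|L(t)u\|_{L^p} + C_{\e_0}\|u\|_{L^p}$ for any $\e_0 > 0$).

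Then I would invoke the standard perturbation theorem for sectorial operators: a relatively bounded (with relative bound $<1$, in fact $0$ here) lower-order perturbation of a sectorial operator is sectorial, with the half-plane $\{\operatorname{Re}\lambda \ge \omega'\}$ in its resolvent set for some possibly larger $\omega'$ — this is exactly Proposition \ref{condition.on.sectorial} applied to $\widetilde L(t)$ after the standard resolvent estimate $\|\lambda R(\lambda, \widetilde L(t))\|_{\cL(L^p)} \le M'$ obtained from the Neumann-series identity $R(\lambda, \widetilde L(t)) = R(\lambda, L(t))\big(1 - R(t) R(\lambda, L(t))\big)^{-1}$ together with $\|R(t) R(\lambda, L(t))\|_{\cL(L^p)} \le \e_0 \|L(t) R(\lambda, L(t))\|_{\cL(L^p)} + C_{\e_0}\|R(\lambda,L(t))\|_{\cL(L^p)} \le 1/2$ for $\operatorname{Re}\lambda$ large. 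Pulling back through $J^{-1}(\cdot)J$, which is an isometry of the relevant operator algebras up to the fixed equivalence of norms, transfers the half-plane resolvent statement to $L(t)$ acting $W^{2k+2,p}\to W^{2k,p}$.

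The main obstacle I anticipate is making the ``commutator is lower order with totally bounded coefficients'' step fully rigorous and, more importantly, getting the $\omega$ in the half-plane to be \emph{uniform in $t \in I$} — the lemma as stated only asks for existence of such a half-plane for each fixed $t$, but the surrounding development (Definition \ref{assumption.L(t)}, Theorem \ref{thm.properties.evolution.system}) needs uniformity, so I would carry the estimates with constants depending only on the $\cC^\infty_b$-norms of the coefficients and on $\gamma$, which are uniform over $I$ by hypothesis; the symbol-calculus bounds in $S^m_{1,0}$ are stable under such uniform control, so no new difficulty arises there, only bookkeeping. A secondary technical point is justifying the relative-bound-zero claim for $R(t)$, which rests on the interpolation inequality $\|u\|_{W^{1+\e,p}} \le \e_0 \|u\|_{W^{2,p}} + C_{\e_0}\|u\|_{L^p}$ combined with Corollary \ref{cor.norm.equiv2} to replace $\|u\|_{W^{2,p}}$ by $\|u\|_{L^p} + \|L(t)u\|_{L^p}$; this is routine but is where all the ellipticity is used.
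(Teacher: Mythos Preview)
Your argument is correct. The paper itself does not supply a proof of this lemma: it is stated as a ``well-known fact'' with references to \cite{Amann, Lunardi, Pazy}, so there is no paper proof to compare against directly.

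That said, it is worth noting that the paper's proof of the immediately following Proposition~\ref{prop.sectorial} suggests a somewhat shorter route than your conjugation-by-$(1-\Delta)^k$ argument. Rather than conjugating and invoking perturbation theory, one can work directly with the norm equivalence of Corollary~\ref{cor.norm.equiv2}: given $\lambda$ in the $L^p$-resolvent set of $L(t)$, the resolvent $R(\lambda,L(t))$ commutes with $L(t)^k$, so for $u\in W^{2k,p}$ one has $L(t)^k R(\lambda,L(t))u = R(\lambda,L(t))L(t)^k u \in W^{2,p}$, whence $R(\lambda,L(t))u \in W^{2k+2,p}$ by the norm equivalence. This shows at once that the half-plane already known to lie in $\rho(L(t))$ for the $L^p$-realization lies in $\rho(L(t))$ for the $W^{2k,p}$-realization as well, without the need for a Neumann-series perturbation step. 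Your approach has the merit of being self-contained (it does not rely on Corollary~\ref{cor.norm.equiv2}) and of making the uniformity in $t$ transparent through explicit control of the commutator symbol; the paper's implicit approach is shorter but leans on the equivalence $\|u\|_{W^{2k,p}}\sim\|u\|_{L^p}+\|L(t)^k u\|_{L^p}$, which already encodes the elliptic regularity you are reconstructing by hand.
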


This gives then the following result.

\begin{proposition}\label{prop.sectorial}
If $L = (L(t))_{t \in I} \in \bL_\gamma$, then for each $t\in I$ and $k$, the operator
$L(t): W^{2k+2,p} \to W^{2k,p}$ is sectorial.
\end{proposition}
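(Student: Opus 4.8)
The plan is to deduce sectoriality from Proposition \ref{condition.on.sectorial}, applied to $A := L(t)$ regarded as an unbounded operator on $X := W^{2k,p}$ with domain $\cD(A) := W^{2k+2,p}$; this reduces the general case to the case $k=0$, where sectoriality of $L(t):W^{2,p}\to L^p$ is already known (\cite[page 73]{Lunardi}). The first hypothesis of that proposition --- that $\rho(L(t))$ contains a half plane $\{\operatorname{Re}\lambda\ge\omega\}$ --- is precisely Lemma \ref{lemma.resolvent.set}. So the whole task is the resolvent bound
\begin{equation*}
  \|\lambda R(\lambda,L(t))\|_{\cL(W^{2k,p})}\le M,\qquad \operatorname{Re}\lambda\ge\omega,
\end{equation*}
after possibly enlarging $\omega$. (Closedness of $A$ on $W^{2k,p}$, needed for ``sectorial'' to be meaningful, is routine given Lemma \ref{lemma.resolvent.set}: if $u_n\to u$ and $L(t)u_n\to v$ in $W^{2k,p}$, then $L(t)u=v$ in the sense of distributions, and then $u=R(\lambda,L(t))(\lambda u-v)\in W^{2k+2,p}$ by bijectivity of $\lambda-L(t)$.)

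To obtain the resolvent bound, fix $g\in W^{2k,p}$ and set $u:=R(\lambda,L(t))g\in W^{2k+2,p}$. Corollary \ref{cor.norm.equiv2} (with $m=k$) gives $\|u\|_{W^{2k,p}}\lesssim \|u\|_{L^p}+\|L^k(t)u\|_{L^p}$. The algebraic heart of the argument is the commutation identity
\begin{equation*}
  L^k(t)\,R(\lambda,L(t))\,g\seq R(\lambda,L(t))\,L^k(t)\,g\,,
\end{equation*}
valid for all $g\in W^{2k,p}$, which I would prove by induction on $k$: the step uses the standard identity $L(t)R(\lambda,L(t))h=R(\lambda,L(t))L(t)h$ for $h$ in the domain of $L(t)$, applied first with $h=g$ and then the inductive hypothesis with $g$ replaced by $L(t)g\in W^{2(k-1),p}$; one also uses that the operators $R(\lambda,L(t))$ on the various $W^{2j,p}$ are consistent (restrictions of one another), since $\lambda-L(t)$ is a bijection $W^{2j+2,p}\to W^{2j,p}$ for every $j\ge0$ by Lemma \ref{lemma.resolvent.set}. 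Granting this, and using the $k=0$ resolvent estimate $\|\lambda R(\lambda,L(t))\|_{\cL(L^p)}\le M_0$ on $\{\operatorname{Re}\lambda\ge\omega\}$ (a consequence of sectoriality on $L^p$), we get
\begin{align*}
  \|\lambda R(\lambda,L(t))g\|_{W^{2k,p}}
  &\lesssim \|\lambda R(\lambda,L(t))g\|_{L^p}+\|\lambda R(\lambda,L(t))L^k(t)g\|_{L^p}\\
  &\lesssim \|g\|_{L^p}+\|L^k(t)g\|_{L^p}\,,
\end{align*}
and a second application of Corollary \ref{cor.norm.equiv2} bounds the right-hand side by a constant times $\|g\|_{W^{2k,p}}$. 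This is exactly the required bound, and Proposition \ref{condition.on.sectorial} then finishes the proof.

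I expect no genuine analytic difficulty here: all the hard estimates are packaged in Corollary \ref{cor.norm.equiv2}, Lemma \ref{lemma.resolvent.set}, and the known case $k=0$. The one point demanding care is the bookkeeping of domains in the commutation identity --- verifying that $L^k(t)R(\lambda,L(t))g$ and $R(\lambda,L(t))L^k(t)g$ are computed in the same ambient space and that each intermediate application of $L(t)$ lands in the domain of the next one --- which is precisely where the consistency of the resolvents across the Sobolev scale is used. Uniformity in $t$, although not demanded by the statement, comes for free, since the constants in Corollary \ref{cor.norm.equiv2} and in the $L^p$ sectoriality are uniform in $t\in I$.
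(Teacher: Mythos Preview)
Your proof is correct and follows essentially the same route as the paper: both reduce to Proposition \ref{condition.on.sectorial} via the norm equivalence of Corollary \ref{cor.norm.equiv2}, the commutation $L^k(t)R(\lambda,L(t))=R(\lambda,L(t))L^k(t)$, and the known $k=0$ sectoriality on $L^p$, together with Lemma \ref{lemma.resolvent.set}. You have simply made explicit the domain bookkeeping and the commutation step that the paper summarizes as ``standard properties of the resolvent.''
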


\begin{proof}
We fix $t=t_0$ and simply write $L_0=L(t_0)$.
For any $u\in W^{2k,p}$ and $\lambda\in \rho(L_{0})$,  Lemma
\ref{lemma.resolvent.set} gives $R(\lambda,L_{0})u \in W^{2k,p}$. Next,
using the norm equivalence \eqref{norm.equivalence2} twice, the fact that $L(t)$ 
is sectorial, and standard properties of the resolvent,  we obtain
\begin{equation*}
\begin{split}
  \|\lambda R(\lambda, L_{0})u\|_{W^{2k,p}} & \leq C(\|\lambda
  R(\lambda,L_{0})u\|_{L^p} + \|\lambda L_{0}^k
  R(\lambda,L_{0})u\|_{L^p})\\
  & \leq C(\|u\|_{L^p}+\|L_{0}^ku\|_{L^p}) \leq C\|u\|_{W^{2k,p}},
\end{split}
\end{equation*}
with $C$ independent of $\lambda$. Lemma \ref{lemma.resolvent.set} and Proposition
\ref{condition.on.sectorial} then imply  that  $L_{0}:W^{2k+2,p} \to W^{2k,p}$ is sectorial.
\end{proof}

Recall that, by Theorem \ref{thm.properties.evolution.system}, if $f\equiv 0$ and 
$L(t)$ is uniformly sectorial, then the IVP \eqref{eq.def.IVP} has a unique strong 
and classical solution for all $h\in X$. In particular, if $L \in \mathbb{L}_\gamma$, 
we have well-posedness in $W^{k,p}$, $k\geq 0$, $1<p<\infty$
for our IVP, Equation \eqref{eq.def.IVP}.

Proposition \ref{prop.sectorial} and the properties of $L_\gamma$ 
show that any $L\in \bL_\gamma$ is a uniformly sectorial operators on $W^{2k,p}$. We 
use here that all bounds on the operator norm of $L(t)$ are uniform in $t\in [0,T]$ 
for fixed $0<T<\infty$. By duality and interpolation, we can obtain mapping properties 
between fractional Sobolev spaces $W^{s,p}$.

\begin{corollary}\label{mapping.property.U}
Suppose $L = (L(t))_{t \in I} \in \bL_\gamma$. Then $L$ generates an evolution system $U$ 
in $W^{s,p}$, $s \ge 0$, $1<p<\infty$,  such that the functions
\begin{equation*}
  \|U(t,t')\|_{W^{s,p},W^{s,p}} , \quad
  \|L(t)U(t,t')\|_{W^{s+2,p}, W^{s,p}} , \quad
  (t - t') \|U(t,t')\|_{W^{s,p}, W^{s+2,p}} 
\end{equation*}
are uniformly bounded for $t, t' \in I$, $0\leq t'\leq t$.
\end{corollary}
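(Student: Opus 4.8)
The plan is to reduce the statement, at even integer order, to the abstract generation theorem of Section~\ref{sec2}, and then to pass to arbitrary real $s\ge 0$ by interpolation; the constants will depend on the fixed $T$, so throughout one works with times in $I\cap[0,T]$. First I would fix $k\in\ZZ_+$ and set $X=W^{2k,p}(\RR^N)$, $\cD=W^{2k+2,p}(\RR^N)$, and verify that $L=(L(t))_{t}$ is uniformly sectorial on $X$ with domain $\cD$ in the sense of Definition~\ref{assumption.L(t)}. Condition~(1) is immediate, since $\cD=W^{2k+2,p}$ is independent of $t$ and dense in $X$. Condition~(2) follows from $a_{ij},b_\ell,c\in\cC^\infty_b(I\times\RR^N)$ (Definition~\ref{def.Lgamma}): the coefficients and all their $x$-derivatives are Lipschitz in $t$ uniformly in $x$, so $t\mapsto L(t)$ is Lipschitz from $I$ into $\cL(W^{2k+2,p},W^{2k,p})$, as already observed in the text. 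Condition~(3) is Proposition~\ref{prop.sectorial} together with the remark following it, once one checks that $\omega,\theta,M$ in the resolvent bound can be chosen independently of $t$; this holds because every constant entering the proof of that proposition — the ellipticity constant $\gamma$, the $\cC^\infty_b$-bounds on the coefficients, and the norm-equivalence constants of Corollary~\ref{cor.norm.equiv2} — is uniform in $t\in[0,T]$ by the definition of $\bL_\gamma$.

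Granting uniform sectoriality, Theorem~\ref{thm.properties.evolution.system} produces a unique evolution system $U^{(k)}(t,t')$ on $W^{2k,p}$ for which $\|U^{(k)}(t,t')\|_{W^{2k,p}}$, $(t-t')\|U^{(k)}(t,t')\|_{W^{2k,p},W^{2k+2,p}}$, and $\|L(t)U^{(k)}(t,t')\|_{W^{2k+2,p},W^{2k,p}}$ are uniformly bounded for $0\le t'\le t\le T$. Since $W^{2k,p}\hookrightarrow L^p$ densely and the solution of the IVP \eqref{eq.def.IVP} with $f\equiv 0$ is unique in $L^p$, the families $U^{(k)}$ are all restrictions of one and the same family $U(t,t')$ (namely the one on $L^p$); I would state this consistency explicitly, as it is what lets us speak of a single evolution system acting on the whole nested scale of spaces at once.

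For a general $s\ge 0$, write $s=2k+2\theta$ with $k\in\ZZ_+$ and $\theta\in[0,1)$, and use complex interpolation of Bessel-potential Sobolev spaces, $[W^{2k,p},W^{2k+2,p}]_\theta=W^{s,p}$, valid for $1<p<\infty$ (so the case $\theta=0$ is the one already treated, and odd integer orders are covered by $\theta=1/2$). Interpolating $U(t,t'):W^{2k,p}\to W^{2k,p}$ with $U(t,t'):W^{2k+2,p}\to W^{2k+2,p}$ gives the uniform bound on $\|U(t,t')\|_{W^{s,p}}$; interpolating $U(t,t'):W^{2k,p}\to W^{2k+2,p}$ with $U(t,t'):W^{2k+2,p}\to W^{2k+4,p}$, each of norm $\lesssim(t-t')^{-1}$, gives the bound on $(t-t')\|U(t,t')\|_{W^{s,p},W^{s+2,p}}$; and interpolating $L(t)U(t,t'):W^{2k+2,p}\to W^{2k,p}$ with $L(t)U(t,t'):W^{2k+4,p}\to W^{2k+2,p}$ gives the bound on $\|L(t)U(t,t')\|_{W^{s+2,p},W^{s,p}}$.

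The main obstacle I anticipate is not the interpolation, which is routine, but the bookkeeping needed to see that all the constants are genuinely uniform in $t$: that the sectoriality estimate of Proposition~\ref{prop.sectorial} holds with $t$-independent $\omega,\theta,M$, and that the Lipschitz constant in Definition~\ref{assumption.L(t)}(2) is $t$-independent. Both reduce to tracking the dependence of the constants in Corollary~\ref{cor.norm.equiv2} and Lemma~\ref{lemma.resolvent.set} on the $\cC^\infty_b$-norms of the coefficients and on $\gamma$, all of which are controlled uniformly on $[0,T]$ precisely because $L\in\bL_\gamma$. A secondary point to be careful about is the compatibility of the $U^{(k)}$ across the scale of spaces, which follows from the uniqueness clause of Theorem~\ref{thm.properties.evolution.system} combined with the density of $W^{2k,p}$ in $L^p$.
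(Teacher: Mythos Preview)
Your proposal is correct and follows essentially the same route as the paper: verify uniform sectoriality of $L(t)$ on $W^{2k,p}$ for each $k\in\ZZ_+$ (Proposition~\ref{prop.sectorial} plus uniformity in $t$), invoke Theorem~\ref{thm.properties.evolution.system} to obtain the evolution system and the three bounds on the even-integer scale, and then interpolate to reach general $s\ge 0$. The paper states this in one line (``By duality and interpolation\ldots''), whereas you spell out the compatibility of the $U^{(k)}$ across $k$ and the $t$-uniformity of the sectoriality constants; both points are implicit in the paper's treatment and your explicit bookkeeping is sound.
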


From Corollary \ref{mapping.property.U}, the fact that $L$ is Lipschitz and $U$ 
is bounded uniformly in time on $I$ as elements of $\cL(W^{s+2,p},W^{s,p})$ 
implies the following.

\begin{corollary}\label{lemma.cont}
Given $s\geq 0$, $1<p<\infty$, for any $t, t' \in I$, $0\leq t'\leq t$,
\begin{equation*}
  \|U(t,t') -  1\|_{W^{s+2,p}, W^{s,p}} 
  \, \leq \, C \, |t-r|,
\end{equation*}
with $C$ independent of $t, t' \in I$, $t' \le t$. In particular,
\begin{equation*}
  [t', \infty) \cap I \ni t \rightarrow U(t,t')\in
    \cL(W^{k+2,p}, W^{k,p})
\end{equation*}
defines a Lipschitz continuous map.
\end{corollary}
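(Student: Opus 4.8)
The plan is to realize $U(t,t')-1$ as the integral of $\partial_\sigma U(\sigma,t')$ and then estimate the integrand by the uniform bound on $L(t)U(t,t')$ furnished by Corollary \ref{mapping.property.U}. Fix $s\geq 0$, $1<p<\infty$, and $t'\leq t$ in $I$, and fix $\xi\in W^{s+2,p}$. Since $L\in\bL_\gamma$ and $W^{s+2,p}$ is the common domain of the operators $L(\sigma)$ acting on $W^{s,p}$ (Definition \ref{def.Lgamma} and Proposition \ref{prop.sectorial}), Theorem \ref{thm.properties.evolution.system} shows that $u(\sigma):=U(\sigma,t')\xi$ is the strong solution in $W^{s,p}$ of $\partial_\sigma u=L(\sigma)u$ with $u(t')=\xi$; in particular $u\in\cC([t',T],W^{s,p})\cap W^{1,1}((t',T),W^{s,p})$ with $u'(\sigma)=L(\sigma)U(\sigma,t')\xi$ for almost every $\sigma$ (Definition \ref{def.solutions}). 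By absolute continuity of $u$ we may integrate:
\begin{equation*}
  U(t,t')\xi-\xi \seq \int_{t'}^{t} L(\sigma)U(\sigma,t')\xi\, d\sigma
  \qquad\text{in } W^{s,p}.
\end{equation*}

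Next I would insert the uniform mapping bound. By Corollary \ref{mapping.property.U} there is a constant $C=C(s,p,T)$, independent of $t',\sigma\in I$ with $t'\leq\sigma\leq T$, such that $\|L(\sigma)U(\sigma,t')\|_{W^{s+2,p},W^{s,p}}\leq C$. Hence
\begin{equation*}
  \|U(t,t')\xi-\xi\|_{W^{s,p}} \,\leq\, \int_{t'}^{t}\|L(\sigma)U(\sigma,t')\xi\|_{W^{s,p}}\,d\sigma
  \,\leq\, C\,(t-t')\,\|\xi\|_{W^{s+2,p}}\,,
\end{equation*}
and taking the supremum over $\xi$ in the unit ball of $W^{s+2,p}$ gives the asserted estimate $\|U(t,t')-1\|_{W^{s+2,p},W^{s,p}}\leq C|t-t'|$.

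For the ``in particular'' clause I would use the evolution property together with the estimate just proved. Let $k\geq 0$ and take $t'\leq t_1\leq t_2$ in $I$. From $U(t_2,t')=U(t_2,t_1)U(t_1,t')$ we get $U(t_2,t')-U(t_1,t')=\big(U(t_2,t_1)-1\big)U(t_1,t')$, so
\begin{equation*}
  \|U(t_2,t')-U(t_1,t')\|_{W^{k+2,p},W^{k,p}}
  \,\leq\, \|U(t_2,t_1)-1\|_{W^{k+2,p},W^{k,p}}\,\|U(t_1,t')\|_{W^{k+2,p},W^{k+2,p}}\,,
\end{equation*}
where $\|U(t_1,t')\|_{W^{k+2,p},W^{k+2,p}}$ is uniformly bounded by Corollary \ref{mapping.property.U} (applied with $s=k+2$) and the first factor is $\leq C(t_2-t_1)$ by the first part. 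This yields the Lipschitz bound and hence the continuity of $t\mapsto U(t,t')\in\cL(W^{k+2,p},W^{k,p})$.

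The only genuinely technical point is the justification of the fundamental theorem of calculus in the Banach-space setting \emph{down to the left endpoint} $\sigma=t'$, i.e.\ that $u$ is absolutely continuous on the closed interval $[t',t]$ rather than merely of class $\cC^1$ on $(t',t)$. I expect to dispatch this by quoting the strong-solution formulation of Definition \ref{def.solutions} (which already encodes $u\in W^{1,1}$) together with the uniform integrability of $u'$ coming from the bound in Corollary \ref{mapping.property.U}; everything else is routine bookkeeping with operator norms.
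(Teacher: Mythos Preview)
Your proposal is correct and is essentially the argument the paper has in mind: the paper does not spell out a proof but simply records, in the sentence immediately preceding the corollary, that the estimate follows from Corollary~\ref{mapping.property.U} (specifically the uniform bound on $\|L(t)U(t,t')\|_{W^{s+2,p},W^{s,p}}$) together with the uniform boundedness of $U$. Your integration of $\partial_\sigma U(\sigma,t')\xi=L(\sigma)U(\sigma,t')\xi$ and the factorization $U(t_2,t')-U(t_1,t')=(U(t_2,t_1)-1)U(t_1,t')$ make this explicit; the endpoint issue you flag is harmless here since for $\xi\in W^{s+2,p}=\cD$ the map $\sigma\mapsto U(\sigma,t')\xi$ is $\cC^1$ on the closed interval (the derivative $L(\sigma)U(\sigma,t')\xi$ is continuous up to $\sigma=t'$ by the continuity of $L(\cdot)$ and of $U(\cdot,t')$ on $\cD$).
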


For the applications we have in mind, the initial data $h$ may not be integrable.
An example is provided by the payoff function of a European call option. To include 
such cases, we therefore introduce {\em exponentially weighted Sobolev spaces}. Given
a fixed point $w\in \RR^N$, we set
\begin{equation}\label{eq.def.weight2}
    \jap{x}_{w} := \jap{x-w} = (1 + |x - w|^2)^{1/2},
\end{equation}
with $\<,\>$ the Japanese bracket. For notational ease, we denote 
$\rho_{a}(x)=e^{a \jap{x}_w}$, with $w$ implicit. Then, for $k \in \ZZ_+$,
$a\in \RR$, $1<p<\infty$,
\begin{multline}\label{def.w.S}
   W_{a, w}^{k, p} (\RR^N) :=
    \{u:\RR^N \to \RR,\ \partial^\alpha \big( \rho_a
    u \big) \in L^p(\RR^N)\ |\alpha| \le k \},
\end{multline}
with norm
\begin{equation*}
    \|u\|_{W_{a, w}^{k, p}}^p := \| \rho_a u\|_{W^{k,p}}^p =
    \sum_{|\alpha| \le k} \|\partial^\alpha_\xi \big( \rho_a
    u \big)\|_{L^p}^p.
\end{equation*}
Weighted fractional spaces $W^{s,p}_{a,w}$, $s\geq 0$,  can then be defined by 
interpolation, and negative spaces by duality $W^{-s,p}_{a,w}=(W^{s,p'}_{-a,w})'$, 
with $p'$ the conjugate exponent to $p$.
The parameter $w$ will be called {\em the weight center}. Different choices of $w$ 
give equivalent norms and we also write $W^{s,p}_{a,w}=W^{s,p}_a$, since this vector 
space does not depend on $w$.

Recall that $\rho_{a}(x) := e^{a\jap{x}_z}$.
We study the operator $L(t)$ on the weighted spaces by conjugation. To this end, we 
define the operator $L_a(t) : = \rho_a L(t) \rho_a^{-1}$ and observe that  
$L: \, W^{s,p}_{a,w} \to W^{s,p}_{a,w}$ if and only if $L_a:\, W^{s,p} \to W^{r,p}$.

\begin{lemma}\label{lemma.same.class}
If $L = (L(t))_{t \in I} \in \bL_\gamma$ and $a \in \RR$, then 
$ \rho_aL\rho^{-1}_a = (L_a(t))\in \bL_\gamma$.
\end{lemma}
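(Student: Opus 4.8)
The plan is to compute the conjugated operator $L_a(t) = \rho_a L(t) \rho_a^{-1}$ explicitly and check that it still has the structural form \eqref{eq.L2} with coefficients in $\cC^\infty_b(I \times \RR^N)$, the same symmetry, the same \emph{real-valuedness}, and — crucially — the same principal part $\sum a_{ij}\D_i\D_j$, so that the ellipticity constant $\gamma$ is preserved. The key observation is that conjugation by the smooth positive multiplier $\rho_a(x) = e^{a\jap{x}_w}$ is a zeroth-order operation on the principal symbol: since $\rho_a^{-1}\D_k\rho_a = \D_k + a\,\D_k(\jap{x}_w)$, conjugating a second-order operator only shifts the lower-order coefficients and leaves the top-order coefficients $a_{ij}$ untouched.

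First I would record the elementary identity $\rho_a \D_k \rho_a^{-1} = \D_k - a\,\psi_k(x)$, where $\psi_k(x) := \D_k\jap{x}_w = (x_k - w_k)/\jap{x}_w$. The point to verify here is that $\psi_k \in \cC^\infty_b(\RR^N)$: the function $\jap{x}_w = (1+|x-w|^2)^{1/2}$ is smooth and bounded below by $1$, its first derivatives $\psi_k$ are bounded by $1$, and all higher derivatives are bounded as well (each differentiation produces negative powers of $\jap{x}_w$ times polynomials in $x-w$ of no larger degree, hence stays bounded). This is the one genuinely computational check, and it is routine.

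Next I would substitute into \eqref{eq.L2}. Writing $L(t) = \sum_{i,j} a_{ij}\D_i\D_j + \sum_k b_k \D_k + c$ and using $\rho_a(\D_i\D_j)\rho_a^{-1} = (\D_i - a\psi_i)(\D_j - a\psi_j)$ together with the fact that $a_{ij}$ and $\rho_a$ are multiplication operators (which commute with each other), one gets
\begin{equation*}
  L_a(t) \seq \sum_{i,j} a_{ij}\D_i\D_j + \sum_k \tilde b_k(t,x)\D_k + \tilde c(t,x),
\end{equation*}
where $\tilde b_k = b_k - 2a\sum_j a_{jk}\psi_j$ and $\tilde c = c - a\sum_k b_k \psi_k - a\sum_{i,j} a_{ij}\D_i\psi_j + a^2 \sum_{i,j} a_{ij}\psi_i\psi_j$ (using $a_{ij} = a_{ji}$ to combine the cross terms). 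Since $a_{ij}, b_k, c, \psi_k, \D_i\psi_j$ are all in $\cC^\infty_b(I\times\RR^N)$ and this class is an algebra, $\tilde b_k, \tilde c \in \cC^\infty_b(I\times\RR^N)$; they are real-valued because everything in sight is real and $a \in \RR$; and the matrix $[a_{ij}]$ is unchanged, so it is still symmetric and still satisfies \eqref{eq.uniformly.elliptic} with the \emph{same} $\gamma$. Hence $L_a = (L_a(t))_{t\in I} \in \bL_\gamma$ by Definition \ref{def.Lgamma}.

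The main (and only real) obstacle is the boundedness of all derivatives of $\jap{x}_w$ and of $\psi_k$; once that is in hand the rest is bookkeeping with the Leibniz rule. I would state this as a one-line lemma or simply assert it, since it is standard. A minor point worth a sentence is that conjugation preserves the operator identity at the level of unbounded operators (one checks it on, say, Schwartz functions, which are dense), so that the equality $\rho_a L \rho_a^{-1} = L_a$ in the statement is literal and not merely formal.
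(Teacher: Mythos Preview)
Your proof is correct and follows essentially the same approach as the paper: compute the conjugated operator (equivalently, the difference $L_a(t)-L(t)$), observe that it differs from $L(t)$ only in the lower-order terms, and conclude using that all derivatives of $\jap{x}_w$ are bounded so the new coefficients remain in $\cC_b^\infty$ while the principal part---hence the ellipticity constant $\gamma$---is unchanged. The paper's version is more compressed (it writes the difference $L_a-L$ directly rather than expanding $\tilde b_k,\tilde c$), but the substance is identical.
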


\begin{proof}
We compute $L_a(t)-L(t)$:
\begin{eqnarray*}
  L_a(t)-L(t) =
    \rho^{-1}\Big [\sum 2 a_{ij} \partial_i
    \rho \partial_j + \big (\sum_{i,j} \partial_i \partial_j
    \rho + \sum b_i \partial_i \rho \big)
    \Big] u.
\end{eqnarray*}
Since $\jap{x}_w$ has bounded derivatives, $L_a(t)-L(t))$ is a first order differential operator the
coefficients of which are smooth with all their derivatives uniformly
bounded. Hence $L_a(t)$ satisfies the same assumptions as $L(t)$.
\end{proof}

\begin{remark}\label{rem.cont}
By Lemma \ref{lemma.same.class}, we can then reduce to study the case $a=0$. Therefore, for instance,
$L(t) : W^{s+2,p}_{a,z} \to W^{s,p}_{a, z}$ is well defined and continuous for any $a$, since this
is true for $a=0$.
More generally, the results of Corollary \ref{lemma.cont} apply with
$W^{k,p}$ replaced by $W^{s,p}_{a,w}$ for any $w$ and $a$.
\end{remark}

See also \cite{AmannMaxReg, AmannFunctSpaces, AmannParab1, MN} for further, related results.

\subsection{Analytic semigroups}

In the construction of the asymptotic expansion for $U(t,0)$ in
Section  \ref{sec4} below, we will need smoothing properties
for the semigroup generated by a certain time-independent operator $L_0$
related to $L$. To this end, we recall needed basic facts
about analytic semigroups. (We refer again to \cite{Amann, Lunardi,
Pazy} for a more complete treatment.)

If $A$ is sectorial, then it generates an analytic semigroup.
One of the most important properties of analytic semigroups is the
following smoothing properties, which we state only for
time-independent operators $L_0$ in
the class $\LL_\gamma$ acting on the Sobolev space $W^{k,p}_{z,a}$. A
general proof can be found in \cite{Pazy} for instance.

\begin{proposition}\label{mapping.L0}
Let  $L_0 \in \bL_\gamma$ be time independent. Then
$e^{tL_0}$ is continuous on $[0,T]$, for any given
$0<T<\infty$,  and for $t>0$,
\begin{equation} \label{eq.mapping.L0}
  \|e^{tL_0}f\|_{W^{r,p}_{z,a}} \leq
  C(r,s)\, t^{(s-r)/2}\|f\|_{W^{s,p}_{z,a}},\quad r\geq s,
\end{equation}
with $C(r,s)$ independent of $t$.
\end{proposition}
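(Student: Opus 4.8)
The plan is to reduce to the unweighted case, prove \eqref{eq.mapping.L0} first for even integer orders using the analytic semigroup that $L_0$ generates on $L^p$, and then recover all real orders $0\le s\le r$ by interpolation. For the reduction: by Lemma~\ref{lemma.same.class}, $L_{0,a}:=\rho_a L_0\rho_a^{-1}\in\bL_\gamma$, and multiplication by $\rho_a$ is an isometry $W^{r,p}_{z,a}\to W^{r,p}$ intertwining $e^{tL_0}$ with $e^{tL_{0,a}}$; since $L_{0,a}$ is again an arbitrary element of $\bL_\gamma$, it suffices to prove \eqref{eq.mapping.L0} with $a=0$, i.e.\ on the spaces $W^{r,p}$.

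\emph{Even integer orders.} Taking $k=0$ in Proposition~\ref{prop.sectorial} shows $L_0:W^{2,p}\to L^p$ is sectorial, hence generates an analytic semigroup on $L^p$. I would then invoke the standard analytic-semigroup smoothing bounds, namely $\|e^{tL_0}\|_{\cL(L^p)}\le C$ and $\|L_0^{\,n}e^{tL_0}\|_{\cL(L^p)}\le C_n\,t^{-n}$ for $t\in(0,T]$ and $n\in\ZZ_+$ (the harmless factor $e^{\omega t}$ absorbed since $t\le T$). For integers $0\le\ell\le m$ and $f\in W^{2\ell,p}$, Corollary~\ref{cor.norm.equiv2} gives $\|e^{tL_0}f\|_{W^{2m,p}}\lesssim\|e^{tL_0}f\|_{L^p}+\|L_0^{\,m-\ell}e^{tL_0}(L_0^{\,\ell}f)\|_{L^p}$, and bounding the two terms with the smoothing estimates and the continuity of $L_0^{\,\ell}:W^{2\ell,p}\to L^p$ yields $\|e^{tL_0}f\|_{W^{2m,p}}\lesssim t^{-(m-\ell)}\|f\|_{W^{2\ell,p}}$, which is exactly \eqref{eq.mapping.L0} for $s=2\ell$, $r=2m$.

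\emph{Interpolation to all real orders.} Fix $0\le s\le r$ and an even integer $n\ge r$. First, interpolating the endpoint bounds $e^{tL_0}:L^p\to W^{n,p}$ (norm $\lesssim t^{-n/2}$) and $e^{tL_0}:W^{n,p}\to W^{n,p}$ (norm $\lesssim1$) along $W^{s,p}=[L^p,W^{n,p}]_{s/n}$ gives $\|e^{tL_0}\|_{\cL(W^{s,p},W^{n,p})}\lesssim t^{(s-n)/2}$; similarly $\|e^{tL_0}\|_{\cL(W^{s,p})}\lesssim1$. Interpolating these last two bounds with parameter $\theta=(r-s)/(n-s)$, so that $[W^{s,p},W^{n,p}]_\theta=W^{r,p}$, produces $\|e^{tL_0}\|_{\cL(W^{s,p},W^{r,p})}\lesssim t^{\theta(s-n)/2}=t^{(s-r)/2}$ on $(0,T]$, as claimed. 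The continuity of $t\mapsto e^{tL_0}$ on $[0,T]$ is the strong continuity of the analytic semigroup on each $W^{2k,p}$, transferred to the fractional and weighted spaces by density plus the uniform bounds just established; for $s<0$ one dualizes the above on $W^{-s,p'}_{-z,a}$.

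The main obstacle is really just the analytic-semigroup bookkeeping: extracting the smoothing estimate $\|L_0^{\,n}e^{tL_0}\|_{\cL(L^p)}\lesssim t^{-n}$ from the sectoriality of $L_0:W^{2,p}\to L^p$, and checking that the semigroups on $L^p$, on the $W^{2k,p}$, and on the interpolation spaces are all consistent so that ``$e^{tL_0}$'' is unambiguous. Everything after that is routine manipulation of interpolation exponents.
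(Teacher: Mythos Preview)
The paper does not give its own proof of this proposition: the sentence immediately preceding the statement reads ``A general proof can be found in \cite{Pazy} for instance,'' and no argument follows. Your proposal is correct and is precisely the standard route one would extract from \cite{Pazy} or \cite{Lunardi}: reduce to $a=0$ via Lemma~\ref{lemma.same.class}, use sectoriality (Proposition~\ref{prop.sectorial}) to get the analytic-semigroup bound $\|L_0^{\,n}e^{tL_0}\|_{\cL(L^p)}\lesssim t^{-n}$, convert this into the even-integer case of \eqref{eq.mapping.L0} with Corollary~\ref{cor.norm.equiv2}, and interpolate. There is nothing to compare against in the paper beyond the citation, and your bookkeeping (consistency of the semigroups on the scale $W^{2k,p}$, absorption of $e^{\omega t}$ for $t\le T$, duality for negative $s$) is exactly the set of points one has to check.
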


An immediate consequence of the above result is the following corollary.

\begin{corollary}\label{cor.cont}\
Let $s,r\in \RR$ be arbitrary and $L_0 \in \bL_\gamma$ be time
independent. Then, the map
\begin{equation*}
  (0, T) \ni t \to e^{tL_0} \in \cL(W^{s, p}_{a, z},W^{r, p}_{a, z})
\end{equation*}
is infinitely many times differentiable.
\end{corollary}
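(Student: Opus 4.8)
The plan is to leverage Proposition \ref{mapping.L0} together with the fact that differentiating the semigroup $e^{tL_0}$ in $t$ simply multiplies by powers of $L_0$, which we may absorb into the smoothing estimate. Concretely, for $t > 0$ the map $t \mapsto e^{tL_0}$ is differentiable with $\frac{d}{dt} e^{tL_0} = L_0 e^{tL_0} = e^{tL_0} L_0$, and by iteration $\frac{d^n}{dt^n} e^{tL_0} = L_0^n e^{tL_0}$. Since $L_0 \in \bL_\gamma$ maps $W^{r+2, p}_{a, z} \to W^{r, p}_{a, z}$ continuously (Remark \ref{rem.cont}), we get $L_0^n \in \cL(W^{r+2n, p}_{a, z}, W^{r, p}_{a, z})$, and hence for any fixed $t_0 \in (0, T)$, writing $e^{tL_0} = e^{(t - t_0/2) L_0} e^{(t_0/2) L_0}$ and applying Proposition \ref{mapping.L0} to the second factor to land in an arbitrarily smooth space, the composition $L_0^n e^{tL_0}$ is a bounded operator $W^{s, p}_{a, z} \to W^{r, p}_{a, z}$ locally uniformly in $t$ near $t_0$.

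The key steps, in order, would be: (i) establish that $t \mapsto e^{tL_0}$ is continuous into $\cL(W^{s,p}_{a,z}, W^{r,p}_{a,z})$ for $t$ in a compact subinterval of $(0, T)$ — this follows from Proposition \ref{mapping.L0} by factoring $e^{tL_0} = e^{\epsilon L_0} e^{(t - \epsilon) L_0}$ with $\epsilon$ small, using smoothing to gain the derivatives lost and strong continuity of the semigroup; (ii) differentiate formally: for $t > 0$ and $h$ small, write the difference quotient $h^{-1}(e^{(t+h)L_0} - e^{tL_0}) = h^{-1}(e^{hL_0} - 1) e^{tL_0}$, split $e^{tL_0}$ to produce extra smoothing, and show the difference quotient converges in operator norm to $L_0 e^{tL_0}$ using the known identity $h^{-1}(e^{hL_0} - 1)g \to L_0 g$ for $g$ in the domain together with the uniform bounds; (iii) iterate, noting $L_0^n e^{tL_0}$ is again of the same form (a smoothing operator times a power of $L_0$) and repeating the argument to obtain all higher derivatives, with continuity of the $n$-th derivative handled exactly as in step (i).

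The main obstacle is step (ii): one must be careful that the difference quotient converges in the \emph{operator} norm $\cL(W^{s,p}_{a,z}, W^{r,p}_{a,z})$ and not merely strongly. The trick is to exploit the smoothing of $e^{tL_0}$ — splitting off a factor $e^{\epsilon L_0}$ maps into $W^{M, p}_{a, z}$ for $M$ as large as we like, so the remaining piece $h^{-1}(e^{hL_0} - 1)$ acts on a space where we have genuine norm control via Taylor's formula with integral remainder, $e^{hL_0} - 1 - hL_0 = \int_0^h (h - \tau) L_0^2 e^{\tau L_0} \, d\tau$, and the right-hand side is $O(h^2)$ in the relevant operator norm thanks again to Proposition \ref{mapping.L0}. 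Everything else is routine: the uniform-in-$t$ bounds on compact subintervals of $(0,T)$ are immediate from the statement of Proposition \ref{mapping.L0}, and the passage from $a = 0$ to general $a$ is covered by Lemma \ref{lemma.same.class} and Remark \ref{rem.cont}.
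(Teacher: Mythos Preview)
Your proposal is correct and follows the same line the paper intends: the paper states this corollary as ``an immediate consequence'' of Proposition~\ref{mapping.L0} without giving any further argument, and your sketch is precisely the standard way to unpack that consequence---use $\tfrac{d^n}{dt^n}e^{tL_0}=L_0^n e^{tL_0}$, absorb the loss of $2n$ derivatives via the smoothing estimate \eqref{eq.mapping.L0}, and handle operator-norm (as opposed to strong) convergence of the difference quotient by factoring off a smoothing piece and applying the integral-remainder form of Taylor's formula. Your reduction to $a=0$ via Lemma~\ref{lemma.same.class} is also in line with the paper's usage.
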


We assume that we are given a {\em time independent} operator $L_0\in \bL_\gamma$
for a fixed $\gamma >0$ and let  $L\in \bL_\gamma$. We write
\begin{equation}\label{eq.splitting}
  L(t) = L_0 + V(t),
\end{equation}
and study the classical question of relating the evolution system $U(t,s)$ generated by $L$ to the
semigroup $e^{tL_0}$ generated by $L_0$ \cite{Amann,Lunardi}.
\medskip

\noindent {\bf Notation:}
{\em We denote the solution operator of the IVP \eqref{eq.def.IVP} for $s = 0$,
that is, $U(t,0) = U^L(t, 0)$, simply by $U(t)$, a one-parameter family of linear
operators.}

\subsection{Duhamel's formula}

We write the general IVP for  $L_0$ as
\begin{equation}\label{eq.def.IVP.L0}
\begin{cases}
  \D_t u(t, x) - L_0u(t, x) = f(t,x), & \hspace{1.0cm} \mbox{in}\;
  (0,\infty) \times \bR^N \\
  u(0, x) = h(x), \ \ & \hspace{1.0cm} \mbox{on}\; \{0\}\times
  \bR^N\,,
\end{cases}
\end{equation}

\begin{lemma} \label{lemma.Duhamel1}
Let $h\in L^p$, $1<p<\infty$, and let $0<T\leq \infty$.
If $f\in L^1((0,T),L^p) \bigcap \cC((0,T],L^p)$  and $u$ is the
unique strong  solution to \eqref{eq.def.IVP.L0} on $[0,T]$, then $u$ is given by
\begin{equation*}
  u(t,x) = e^{tL_0} h + \int_0^t e^{(t-\tau)L_0}h(\tau)d\tau, \quad
  0\leq t \leq 1.
\end{equation*}
If $f$ satisfies in addition $f\in C^\alpha((0,T);L^p)$ for some $0<\alpha$, 
then \eqref{eq.def.IVP.L0} has a unique strong solution $u$.
\label{lemma.Duhamel.i}
\end{lemma}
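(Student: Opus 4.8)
The plan is to establish the two assertions in turn: first, assuming $u$ is a strong solution, recover the Duhamel representation by a ``variation of constants'' computation; then, under the extra hypothesis $f\in C^\alpha((0,T);L^p)$, show directly that the function given by that formula is a classical solution, from which the strong solution and its uniqueness follow. For the representation, I would fix $t\in(0,T]$ and study the $L^p$-valued function $g(s):=e^{(t-s)L_0}u(s)$, $s\in[0,t]$. On any compact $[\varepsilon,t-\varepsilon]$ the map $\sigma\mapsto e^{\sigma L_0}$ is $\cC^1$ in operator norm with derivative $L_0e^{\sigma L_0}\in\cL(L^p)$ (Corollary \ref{cor.cont} and Proposition \ref{mapping.L0}), hence Lipschitz there, and $u$ is absolutely continuous since $u\in W^{1,1}((0,T),L^p)$; thus $g$ is absolutely continuous on $[\varepsilon,t-\varepsilon]$, and using $u(s)\in\cD(L_0)=W^{2,p}$ for a.e.\ $s$ together with $\pa_su(s)=L_0u(s)+f(s)$, the Leibniz rule gives, a.e.,
\begin{equation*}
  g'(s)\seq-L_0e^{(t-s)L_0}u(s)+e^{(t-s)L_0}\pa_su(s)\seq e^{(t-s)L_0}f(s).
\end{equation*}
Since $\sup_{0\le\sigma\le T}\|e^{\sigma L_0}\|_{\cL(L^p)}<\infty$, one has $\|g'(s)\|_{L^p}\le C\|f(s)\|_{L^p}\in L^1(0,t)$, so the fundamental theorem of calculus on $[\varepsilon,t-\varepsilon]$ followed by $\varepsilon\to0$ (continuity of $u$ at $0$, strong continuity of the semigroup, dominated convergence) gives $u(t)=e^{tL_0}h+\int_0^te^{(t-s)L_0}f(s)\,ds$, the asserted formula.

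For existence under the H\"older hypothesis I would set $u(t):=e^{tL_0}h+v(t)$ with $v(t):=\int_0^te^{(t-s)L_0}f(s)\,ds$ and check the defining properties of a classical solution of \eqref{eq.def.IVP.L0}. Boundedness of $e^{\sigma L_0}$ on $[0,T]$ and $f\in L^1$ give $v\in\cC([0,T],L^p)$ with $v(0)=0$, so $u\in\cC([0,T],L^p)$, $u(0)=h$. For $t>0$ I would write
\begin{equation*}
  v(t)\seq\int_0^te^{(t-s)L_0}\big(f(s)-f(t)\big)\,ds+\int_0^te^{\sigma L_0}f(t)\,d\sigma;
\end{equation*}
Proposition \ref{mapping.L0} (with Corollary \ref{cor.norm.equiv2}) yields $\|L_0e^{\sigma L_0}\|_{\cL(L^p)}\le C\sigma^{-1}$, so applying $L_0$ to the first integrand gives $L^p$-norm $\le C(t-s)^{-1}\|f(s)-f(t)\|_{L^p}\le C'(t-s)^{\alpha-1}\in L^1(0,t)$, whence the first term lies in $\cD(L_0)=W^{2,p}$ by closedness of $L_0$; for the second term, $\int_\varepsilon^tL_0e^{\sigma L_0}f(t)\,d\sigma=e^{tL_0}f(t)-e^{\varepsilon L_0}f(t)\to e^{tL_0}f(t)-f(t)$ as $\varepsilon\to0$, so closedness again gives $\int_0^te^{\sigma L_0}f(t)\,d\sigma\in W^{2,p}$ with $L_0$ of it equal to $(e^{tL_0}-1)f(t)$. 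Hence $v(t)\in W^{2,p}$ for $t>0$. The classical differentiation argument for analytic semigroups (e.g.\ \cite[Thm.~4.3.1]{Pazy} or \cite{Lunardi}) then shows $v\in\cC^1((0,T),L^p)$, that $t\mapsto L_0v(t)$ is in $\cC((0,T),L^p)$, and $v'(t)=L_0v(t)+f(t)$; together with Corollary \ref{cor.cont} for the term $e^{tL_0}h$, this makes $u\in\cC([0,T],L^p)\cap\cC^1((0,T),L^p)\cap\cC((0,T),W^{2,p})$ with $\pa_tu(t)=L_0u(t)+f(t)$, i.e.\ a classical, and hence strong, solution; its uniqueness is forced by the first part.

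The main obstacle will be the two places where one cannot differentiate under the integral sign. In the first part it is the degeneration of $e^{(t-s)L_0}$ to the identity as $s\to t^-$ while $u$ is differentiable only a.e.; I would circumvent this by truncating to $[\varepsilon,t-\varepsilon]$ and passing to the limit, using strong continuity of the semigroup to control the endpoint $g(t-\varepsilon)\to u(t)$. In the second part it is the nonintegrable singularity $\|L_0e^{\sigma L_0}\|_{\cL(L^p)}\sim\sigma^{-1}$ at $\sigma=0$: the H\"older continuity of $f$, entering through the difference $f(s)-f(t)$, is precisely what upgrades the bound to the integrable $(t-s)^{\alpha-1}$, and closedness of $L_0$ is what lets one move $L_0$ (and later $\pa_t$) past the integral. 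Everything else is routine analytic-semigroup calculus resting on Proposition \ref{mapping.L0} and Corollary \ref{cor.cont}.
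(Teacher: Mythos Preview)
Your proposal is correct and is precisely the standard argument the paper invokes: the paper's proof of this lemma is simply a reference to \cite[Theorem 2.9, p.~107; Corollary 3.3, p.~123]{Pazy}, noting that $L_0$ generates an analytic semigroup, and what you have written is exactly that textbook derivation spelled out in detail. The variation-of-constants computation via $g(s)=e^{(t-s)L_0}u(s)$ for the representation, and the add--subtract $f(s)-f(t)$ splitting exploiting H\"older continuity to tame the $\sigma^{-1}$ singularity of $\|L_0e^{\sigma L_0}\|$ for existence, are the same two ingredients Pazy uses.
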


\begin{proof} This proof is standard (see e.g. \cite[Theorem 2.9, page 107,Corollary 3.3, page 123]{Pazy}), 
using the fact that $L_0$ generates an analytic semigroup.
\end{proof}

We obtain the following consequence.

\begin{corollary}\label{cor.Duhamel2}
Let $u(t)$ be the unique classical solution of the IVP \eqref{eq.def.IVP}
with $s =0$. Then $u$ solves the Volterra-type equation
\begin{equation}\label{eq.duhamel}
  u(t) = U(t) h = e^{tL_0}  h + \int_0^t e^{(t-\tau)L_0}
  V(\tau)\, u(\tau)d\, \tau
\end{equation}
where $V$ is given  in \eqref{eq.splitting}.
\label{lemma.Duhamel.ii}
\end{corollary}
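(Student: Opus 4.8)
The plan is to deduce this Volterra equation directly from Lemma~\ref{lemma.Duhamel1} (Duhamel's formula for the constant-coefficient operator $L_0$) together with the splitting \eqref{eq.splitting}. The key observation is that the classical solution $u(t) = U(t)h$ of the IVP \eqref{eq.def.IVP} with $s=0$ satisfies
\begin{equation*}
  \partial_t u(t) - L_0 u(t) \seq \big( L(t) - L_0 \big) u(t) \seq V(t) u(t)\,,
\end{equation*}
so that $u$ may be viewed as the solution of the IVP \eqref{eq.def.IVP.L0} for $L_0$ with forcing term $f(t) := V(t) u(t)$ and the same initial datum $h$.

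First I would verify that this forcing term has the regularity required to apply Lemma~\ref{lemma.Duhamel1}. Since $u = U(\cdot)h \in \cC([0,T], W^{s,p}_{a,w}) \cap \cC^1((0,T), W^{s,p}_{a,w})$ by Theorem~\ref{thm.properties.evolution.system} (specialized via Corollary~\ref{mapping.property.U}), and since $V(t) = L(t) - L_0$ is a second-order operator with $\cC^\infty_b$ coefficients, depending Lipschitz-continuously on $t$ in $\cL(W^{s+2,p}_{a,w}, W^{s,p}_{a,w})$ (the $\bL_\gamma$ structure, Definition~\ref{def.Lgamma} and Remark~\ref{rem.cont}), the map $t \mapsto V(t)u(t)$ is continuous on $(0,T]$ with values in $W^{s,p}_{a,w}$; and one checks it is in $L^1((0,T), W^{s,p}_{a,w})$ using that $u(t)$ lies in the domain with $(t)\|U(t,0)\|_{W^{s,p}, W^{s+2,p}}$ uniformly bounded (Corollary~\ref{mapping.property.U}), which gives an integrable $O(t^{-1})$-type bound near $t = 0$ once paired with $\|u\|_{W^{s,p}}$ controlled; in fact for $h$ merely in $W^{s,p}_{a,w}$ one may first note $U(t)h \in \cD$ for $t>0$ and treat the integral over $[\varepsilon, t]$, then pass to the limit. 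Then Lemma~\ref{lemma.Duhamel1} applies and yields exactly
\begin{equation*}
  u(t) \seq e^{tL_0} h + \int_0^t e^{(t-\tau)L_0} V(\tau) u(\tau)\, d\tau\,,
\end{equation*}
which is \eqref{eq.duhamel} after substituting $u(t) = U(t)h$.

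The main obstacle I anticipate is the integrability at $\tau = 0$ of the Duhamel integrand $e^{(t-\tau)L_0} V(\tau) u(\tau)$: since $V(\tau)u(\tau)$ need only be $O(\tau^{-1})$ in $W^{s,p}_{a,w}$ when $h$ is not in the domain, one must exploit the smoothing of $e^{(t-\tau)L_0}$ (Proposition~\ref{mapping.L0}) against the higher Sobolev norm of $u(\tau)$, balancing the two powers of $(t-\tau)$ and $\tau$ so that the singularity is integrable; alternatively, one regards $V(\tau)u(\tau)$ as living in a negative-order space $W^{s-2,p}_{a,w}$ where it is bounded, and applies the mapping bound of Proposition~\ref{mapping.L0} with $r - s = 2$. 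I would present the argument for $h$ in the domain first, where everything is manifestly integrable and $u$ is a genuine classical solution, and then remark that the general case follows by the density of $\cD$ in $X$ together with the uniform boundedness of all operators involved, so both sides of \eqref{eq.duhamel} extend continuously. The remaining verifications (uniqueness of the strong solution, that $u = U(t)h$ is indeed that solution) are already furnished by Theorem~\ref{thm.properties.evolution.system} and the discussion following Definition~\ref{def.solutions}, so no new work is needed there.
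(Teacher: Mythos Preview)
Your proposal is correct and follows essentially the same route as the paper: reduce to Duhamel's formula for $L_0$ with forcing $f(t)=V(t)u(t)$, verify the formula first for $h$ in the domain (the paper takes $h\in W^{2,p}$), and then pass to general $h$ by density. The paper makes your anticipated ``balancing'' of singularities explicit by splitting $e^{(t-\tau)L_0}V(\tau)U(\tau)$ through the chain $L^p\to W^{1,p}\to W^{-1,p}\to L^p$, which yields the integrable bound $\int_0^t (t-\tau)^{-1/2}\tau^{-1/2}\,d\tau$; this is exactly the negative-order-space alternative you mention.
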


\begin{proof} By density, we first assume that
$h \in W^{2,p}$, and observe that, formally, the solution the IVP 
\eqref{eq.def.IVP} satisfied \eqref{eq.def.IVP.L0} with
\begin{equation*}
  f(t) = Vu(t,x) = (L(t)-L_0)u(t,x) = u_t(t,x)-L_0U(t)f.
\end{equation*}
Since the solution operator $U(t)$ of the IVP \eqref{eq.def.IVP} satisfies
$U(t): W^{2,p}\rightarrow W^{2,p}$ as a bounded operator that is strongly 
continuous  for $t\geq 0$ and continuously differentiable for $t>0$,
$L_0U(t)f\in L^p$ has this regularity.  But
$u_t\in L^p$ share the same regularity, given that $u$ is a classical solution.
 Therefore, by  the first part and the uniqueness of classical solutions,
$u$ must agree with  \eqref{eq.duhamel}.
Next, given $h\in L^p$, there exists $h_n \in W^{2,p}$, $h_n \to h$ in $L^p$. Let
$u_n$ be the strong solution with $u_n(0)=h_n$. Then $u_n$ satisfies
\[
     u_n(t) = U(t)h_n = e^{tL_0}f + \int_0^t e^{(t-\tau)L_0}\,
  V(\tau)\, u_n(\tau)\, d\tau.
\]
We would like to pass to the limit $n \to \infty$ on the right-hand
side of the expression above. In order to do so, we will use
the mapping properties of the semigroup $e^{tL_0}$ (Proposition
\ref{mapping.L0}) and of the evolution system $U(t)$ (Corollary
\ref{mapping.property.U}) to show that the integral is the action of a
continuous operator on $L^p$. Indeed,
\begin{align}
 & \Big \|\int_0^t e^{(t-\tau)L_0}V(\tau)U(\tau)d\tau \Big \|_{L^p}  
 \nonumber\\
 &\qquad \leq
\int_0^t\|e^{(t-\tau)L_0}\|_{W^{-1,p}, L^p} \|V(\tau)\|_{W^{1,p}, W^{-1,p}} 
\|U(\tau)\|_{L^p, W^{1,p}} d\tau \nonumber\\
 & \qquad \qquad \qquad \leq \int_0^t
\frac{1}{\sqrt{t-\tau}}\frac{1}{\sqrt{\tau}}d\tau<\infty
\end{align}
The proof is complete.
\end{proof}

\begin{remark}
Solutions to the Volterra equation \eqref{eq.duhamel} are called {\em mild} solutions.
Under the assumptions of the Lemma, classical and strong solutions of \eqref{eq.def.IVP.L0} 
are mild solutions, which are in particular unique.
In fact, if $f$ is locally H\"older's continuous in time, mild solutions are classical 
solutions  \eqref{eq.def.IVP.L0} \cite[Theorem 3.2, page 111]{Pazy}.
\end{remark}

Using this lemma, we can generalize the bounds contained in Corollary
\ref{mapping.property.U}.

\begin{lemma}[Mapping properties of $U(t,r)$]
\label{mapping.property.U(t,r)}
Let $U$,  be the evolution system generated by the
operator $L\in \LL_\gamma$ on $[0,T]$.  For any  $0\leq k\leq r$, $a\in \RR$,
$1<p<\infty$, $U(t_1,t_2): W^{k,p}_{z,a} \to W^{r,p}_{z,a}$ if $t_2<t_1$, and
there exists $C>0$ independent of $t_1$, $t_2$ such that
\begin{equation*}
  \|U(t_1,t_2)\|_{W_{a, z}^{k,p}, W_{a, z}^{r,p}}\leq
  C(t_1-t_2)^{(k-r)/2}.
\end{equation*}
\end{lemma}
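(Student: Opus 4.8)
The statement to prove is Lemma~\ref{mapping.property.U(t,r)}: the smoothing bound
\[
  \|U(t_1,t_2)\|_{W_{a, z}^{k,p}, W_{a, z}^{r,p}}\leq C\,(t_1-t_2)^{(k-r)/2}
\]
for $0\le k\le r$. The plan is to deduce this from the known smoothing properties of the analytic semigroup $e^{tL_0}$ (Proposition~\ref{mapping.L0}) together with the Volterra representation of $U$ from Corollary~\ref{cor.Duhamel2}, namely $U(t_1,t_2)h = e^{(t_1-t_2)L_0}h + \int_{t_2}^{t_1} e^{(t_1-\tau)L_0}V(\tau)U(\tau,t_2)\,d\tau$. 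Because of Remark~\ref{rem.cont} (conjugation by $\rho_a$ preserves $\bL_\gamma$), it suffices to treat $a=0$, so I will work with plain $W^{s,p}$ throughout and reinstate the weight at the end. Also, by the semigroup property $U(t_1,t_2)=U(t_1,\frac{t_1+t_2}{2})U(\frac{t_1+t_2}{2},t_2)$ and interpolation of Sobolev indices, one can reduce to the case $r-k\le 2$ (or to integer $r,k$ first and then interpolate), which keeps the singular integrals below convergent.

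The core argument is a bootstrap/iteration on the Volterra equation. Set $\sigma:=t_1-t_2$. The first term $e^{\sigma L_0}$ already satisfies the desired bound $\|e^{\sigma L_0}\|_{W^{k,p},W^{r,p}}\le C\sigma^{(k-r)/2}$ by Proposition~\ref{mapping.L0}. For the integral term, I would estimate, using that $V(\tau)=L(\tau)-L_0$ is a first-order operator hence bounded $W^{m+1,p}\to W^{m,p}$ uniformly in $\tau$, and assuming inductively (or via Corollary~\ref{mapping.property.U}, which gives the case $r-k\le 2$ already up to adjusting indices) a bound $\|U(\tau,t_2)\|_{W^{k,p},W^{j,p}}\le C(\tau-t_2)^{(k-j)/2}$ for $j$ in a suitable range:
\[
  \Big\|\int_{t_2}^{t_1} e^{(t_1-\tau)L_0}V(\tau)U(\tau,t_2)\,d\tau\Big\|_{W^{k,p}\to W^{r,p}}
  \le C\int_{t_2}^{t_1}(t_1-\tau)^{(j-1-r)/2}\,(\tau-t_2)^{(k-j)/2}\,d\tau .
\]
Choosing $j$ so that both exponents exceed $-1$ (possible precisely when $r-k<2$, or in a multi-step scheme when $r-k$ is larger), the Beta-integral evaluates to a constant times $\sigma^{(k-r)/2+1/2}$, which is better than the claimed power by a half; so the integral term is a lower-order perturbation. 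Iterating this $n$ times (with $n$ large enough that $(r-k)/2 < n/2$), the remainder $n$-fold time-ordered integral is bounded by $C\sigma^{(k-r)/2+n/2}$ times the operator norm of $U$ on a fixed Sobolev space, which is bounded by Corollary~\ref{mapping.property.U}; summing the finitely many terms of the iteration yields the result.

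The main obstacle is purely bookkeeping: one must choose the intermediate Sobolev exponents at each stage of the iteration so that \emph{every} factor $e^{(t-\tau)L_0}$ loses at most (just under) two derivatives and every resulting time-power stays integrable, while verifying that the finitely many iterates suffice to absorb the full gap $r-k$. This is the step where one has to be careful, but it is a standard Duhamel-iteration argument and involves no new ideas beyond Proposition~\ref{mapping.L0}, the uniform first-order bound on $V$, and the elementary Beta-function estimate $\int_{0}^{\sigma}(\sigma-\tau)^{\alpha-1}\tau^{\beta-1}\,d\tau = B(\alpha,\beta)\,\sigma^{\alpha+\beta-1}$ for $\alpha,\beta>0$. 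Once $a=0$ is done, Remark~\ref{rem.cont} transfers everything to $W^{r,p}_{z,a}$ verbatim, since $\rho_a U(t_1,t_2)\rho_a^{-1}$ is the evolution system generated by $L_a\in\bL_\gamma$.
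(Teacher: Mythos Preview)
Your overall architecture --- reduce to a small Sobolev gap via the evolution-system property $U(t_1,t_2)=\prod U(\cdot,\cdot)$ over subintervals, and handle the small-gap case using Duhamel's formula and Proposition~\ref{mapping.L0} --- is exactly what the paper does. The reduction step and the use of Corollary~\ref{mapping.property.U} (plus interpolation) for the base case $0\le r-k\le 2$ are both fine.

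There is, however, a real slip in your Duhamel analysis. You write that $V(\tau)=L(\tau)-L_0$ ``is a first-order operator.'' In this section $L_0$ is an \emph{arbitrary} time-independent element of $\bL_\gamma$ (see \eqref{eq.splitting}); its principal part need not match that of $L(\tau)$, so $V(\tau)$ is generically a second-order operator, bounded only $W^{m+2,p}\to W^{m,p}$. This matters: your ``gain of half a power of $\sigma$ per iteration,'' and hence the bound $C\sigma^{(k-r)/2+n/2}$ for the $n$-fold remainder, depends precisely on $V$ costing only one derivative. With $V$ second-order, each Dyson iterate and the remainder all carry the same power $\sigma^{(k-r)/2}$, so iteration buys nothing. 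Worse, your single Beta estimate
\[
\int_{t_2}^{t_1}(t_1-\tau)^{(j-2-r)/2}(\tau-t_2)^{(k-j)/2}\,d\tau
\]
(with the correct shift $j-2$) requires $r<j<k+2$, so you would need the bound $\|U(\tau,t_2)\|_{W^{k,p}\to W^{j,p}}\le C(\tau-t_2)^{(k-j)/2}$ for some $j>r$, which is \emph{stronger} than what you are proving; the argument becomes circular.

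The paper sidesteps this by splitting the Duhamel integral at the midpoint $(t_1+t_2)/2$. Near $\tau=t_2$ it uses $V:W^{k}\to W^{k-2}$ and the already-known boundedness of $U$ on $W^{k}$; near $\tau=t_1$ it uses $V:W^{k+2}\to W^{k}$ and the already-known bound $\|U\|_{W^{k}\to W^{k+2}}\le C\tau^{-1}$ from Corollary~\ref{mapping.property.U}. Both halves then integrate to $C(t_1-t_2)^{(k-r)/2}$ provided $0<r-k<2$. For general $r-k$ the paper composes $U$ over $m>\tfrac{r-k}{2}$ equal subintervals, exactly as you suggest. So your plan is salvageable, but the ``bookkeeping'' you flag is not merely bookkeeping: you need either this splitting trick or simply to bypass Duhamel altogether for the base case (Corollary~\ref{mapping.property.U} plus interpolation already gives $r-k\le 2$).
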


\begin{proof}
We set  $a=0$ by Lemma \ref{lemma.same.class} and,  as $p$ is fixed,  
write  $W^k=W^{k,p}$. In particular, $\| T \|_{W^k, W^m} =
\|T\|_{\cL(W^{k,p}, W^{m,p})}$.
We temporarily
assume that $k\leq r< k+2$.
From \eqref{eq.duhamel}, for any $0\leq t_2\leq t_1\leq 1$ and any $h\in W^k$,
\[
   U(t_1,t_2) h = e^{(t_1-t_2) L_0} \, g +
         \int_{0}^{t_1-t_2} e^{(t_1-t_2-\tau) L_0} \,V(\tau)\,
       U(t_2+\tau,t_2) h\,d\tau.
\]
From Corollary
\ref{mapping.property.U} and Proposition
\ref{mapping.L0}, it follows  that
\begin{equation*}
\begin{split}
  & \|U(t_1,t_2)\|_{W^{k },  W^{r }}\leq
  \|e^{(t_1-t_2)L_0}\|_{W^{k } ,  W^{r }}\\
  & +\int_{0}^{\frac{t_1-t_2}{2}}
  \|e^{(t_1-t_2-\tau)L_0}\|_{W^{k-2} , 
    W^{r}}\|V(\tau)\|_{W^{k} , 
    W^{k-2}}\|U(\tau+t_2,t_2)\|_{W^{k}, W^{k}}\,d\tau\\
  & + \int_{\frac{t_1-t_2}{2}}^{t_1-t_2}
  \|e^{(t_1-t_2-\tau)L_0}\|_{W^{k}\rightarrow
    W^{r}}\|V(\tau)\|_{W^{k+2}\rightarrow
    W^{k}}\|U(\tau+t_2,t_2)\|_{W^{k}\rightarrow W^{k+2}}\,d\tau\\
  &\leq C \left
  ((t_1-t_2)^{\frac{k-r}{2}}+\int_{0}^{\frac{t_1-t_2}{2}}
  (t_1-t_2-\tau)^{\frac{k-2-r}{2}}\, d\tau \right .\\
  & \left . \qquad \qquad \qquad + \int_{\frac{t_1-t_2}{2}}^{t_1-t_2}
   (t_1-t_2-\tau)^{\frac{k-r}{2}} \,\tau^{-1} \,d\tau
  \right) \leq C(t_1-t_2)^{\frac{k-r}{2}},
\end{split}
\end{equation*}
using that \ $0<(r-k)/2 <1$\  by hypothesis.
Next,  let $\delta=\frac{r-k}{m}$, where $m$ is an
integer and $m>\frac{r-k}{2}$. Then the previous estimate gives
\begin{equation*}
  \Big \|U \Big (  t_1-(j-1)\frac{t_1-t_2}{m},t-j\frac{t_1-t_2}{m} \Big) 
  \Big \|_{W^{k+(j-1)\delta } 
  \rightarrow W^{k+j\delta }} \leq C\left (\frac{t_1-t_2}{m} \right
  )^{\frac{k-r}{2m}},
\end{equation*}
for $j=1,2,\cdots , m$. Therefore,
\begin{equation*}
  \|U(t_1,t_2)\|_{W^{k }\rightarrow W^{r }}\leq C\left
  (\frac{t_1-t_2}{m} \right
  )^{m \frac{k-r}{2m}}=C(t_1-t_2)^{(k-r)/2},
\end{equation*}
where $C$ depends on $k,r,p$, but not on $t_1$, $t_2$.
\end{proof}

In particular, the solution operator $U(t)$ of \eqref{eq.def.IVP} is
smoothing of infinite order on any positive Sobolev space $W^{k,p}_{z,a}$ (in fact, by duality,
on any Sobolev space) if $t>0$, as it is the case for $e^{t L_0}$.

\begin{corollary}\label{cor.cont.U}
If $L(t)\in \bL_{\gamma}$, and $U(t,r),t\geq r\geq0$ is the
resulting evolution system, then
\begin{equation*}
  (0,+\infty)\ni t \rightarrow U(t,r)\in \cB(W^{s, p}_{a, z}, W^{m, p}_{a, z})
\end{equation*}
is infinitely many times differentiable for any $s$ and $m$,
$1<p<\infty$, $a\in \RR$, and any $z\in \RR^3$.
\end{corollary}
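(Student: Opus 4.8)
The plan is to prove, by induction on $j\ge 0$, the following statement: for all real $\sigma,\mu$, all $1<p<\infty$, $a\in\RR$, $z\in\RR^N$, the map $(r,\infty)\cap I\ni t\mapsto U(t,r)\in\cB(W^{\sigma,p}_{a,z},W^{\mu,p}_{a,z})$ is of class $C^j$, and $\partial_t U(t,r)=L(t)U(t,r)$ for $t>r$, where the right-hand side is read as the composition $W^{\sigma,p}_{a,z}\xrightarrow{U(t,r)}W^{\mu+2,p}_{a,z}\xrightarrow{L(t)}W^{\mu,p}_{a,z}$, which is legitimate by Lemma \ref{mapping.property.U(t,r)} and Definition \ref{def.Lgamma}. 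I would fix $t_1>r$ once and for all and keep $t$ in a compact neighbourhood of $t_1$ inside $(r,\infty)$, so that all smoothing bounds from Lemma \ref{mapping.property.U(t,r)} are uniformly finite there; since $t_1$ is arbitrary this yields the Corollary. (By Lemma \ref{lemma.same.class} and Remark \ref{rem.cont} there is no loss in dropping the weight, though it is not even necessary here.)

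First (the case $j=0$), I would check continuity. For small $h>0$ one has $U(t+h,r)-U(t,r)=[U(t+h,t)-1]U(t,r)$, and for small $h<0$, $U(t+h,r)-U(t,r)=[1-U(t,t+h)]U(t+h,r)$; combining $\|U(t,r)\|_{W^{\sigma,p}_{a,z},W^{\mu+2,p}_{a,z}}\le C$ (Lemma \ref{mapping.property.U(t,r)}, uniform on the neighbourhood of $t_1$) with $\|U(t',t'')-1\|_{W^{\mu+2,p}_{a,z},W^{\mu,p}_{a,z}}\le C|t'-t''|$ (Corollary \ref{lemma.cont} and Remark \ref{rem.cont}) gives $\|U(t+h,r)-U(t,r)\|_{W^{\sigma,p}_{a,z},W^{\mu,p}_{a,z}}\le C|h|\to0$. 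Since $t\mapsto L(t)$ is Lipschitz into $\cB(W^{\mu+2,p}_{a,z},W^{\mu,p}_{a,z})$, it follows that $t\mapsto L(t)U(t,r)$ is continuous into $\cB(W^{\sigma,p}_{a,z},W^{\mu,p}_{a,z})$ for $t>r$.

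Next I would obtain differentiability and then bootstrap. By Theorem \ref{thm.properties.evolution.system}, $\partial_s U(s,r)\xi=L(s)U(s,r)\xi$ for $\xi\in\CIc(\RR^N)$ (which lies in the common domain for every choice of base space), so $U(t+h,r)\xi-U(t,r)\xi=\int_t^{t+h}L(s)U(s,r)\xi\,ds$; by the continuity just established and the density of $\CIc$ in $W^{\sigma,p}_{a,z}$, this identity extends to all $\xi\in W^{\sigma,p}_{a,z}$ as a norm-convergent $W^{\mu,p}_{a,z}$-valued integral. Dividing by $h$ and letting $h\to0$, the average of a norm-continuous integrand converges in operator norm to its value at $s=t$, so $U(\cdot,r)$ is $C^1$ into $\cB(W^{\sigma,p}_{a,z},W^{\mu,p}_{a,z})$ with $\partial_t U(t,r)=L(t)U(t,r)$. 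Finally, since the coefficients of $L$ lie in $\cC^\infty_b(I\times\RR^N)$, the same reasoning applied to their $t$-derivatives shows $t\mapsto L(t)\in\cB(W^{\mu+2,p}_{a,z},W^{\mu,p}_{a,z})$ is actually $C^\infty$; assuming $U(\cdot,r)$ is $C^j$ into every $\cB(W^{\sigma,p}_{a,z},W^{\mu+2,p}_{a,z})$, the Leibniz rule applied to $\partial_t U(t,r)=L(t)U(t,r)$ makes the right-hand side $C^j$ into $\cB(W^{\sigma,p}_{a,z},W^{\mu,p}_{a,z})$, hence $U(\cdot,r)$ is $C^{j+1}$, closing the induction.

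The main obstacle, and the only step that is not bookkeeping, is the passage from the pointwise ODE $\partial_s U(s,r)\xi=L(s)U(s,r)\xi$ on the dense set $\CIc$ to the operator-norm identity on all of $W^{\sigma,p}_{a,z}$ when the target index $\mu$ is unrelated to $\sigma$: this is exactly where one must combine the infinite-order smoothing of $U(s,r)$ for $s>r$ (Lemma \ref{mapping.property.U(t,r)}) with the continuity from the first step, so that the Bochner integral above makes sense and depends continuously on $\xi$. Once this is in place the remaining steps are routine; note in particular that the full $\cC^\infty_b$-regularity of the coefficients in \emph{time} (not merely the H\"older or Lipschitz continuity that Definition \ref{assumption.L(t)} requires in general) is genuinely used in the bootstrap.
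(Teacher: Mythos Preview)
Your argument is correct and follows essentially the same route the paper has in mind: the paper omits the proof, pointing to Corollary~\ref{cor.cont} (the semigroup case), whose implicit argument is precisely ``$\partial_t U=L U$, use the smoothing of Lemma~\ref{mapping.property.U(t,r)} to land in any target Sobolev space, and iterate via Leibniz.'' You have simply supplied the details---the continuity step via Corollary~\ref{lemma.cont}, the density/Bochner-integral passage, and the induction---that the paper leaves to the reader.
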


We omit the proof as it is very similar to that of Corollary \ref{cor.cont}.
Another consequence of Lemma \ref{mapping.property.U(t,r)} is that the distributional 
kernel of the operator $U$, the  {\em Green's function} or {\em fundamental solution} 
for \eqref{eq.def.IVP}, $\cG_t^L \in \cC^\infty(\bR^N \times \bR^N)$.
In fact, $\cG^L_t$ is given by
\begin{equation*}
  \cG^L_t(x,y)= \jap{\delta_x,U(t)\delta_y}\,,
\end{equation*}
where $\jap{\cdot,\cdot}$ is the duality pairing between
$\cC^\infty(\RR^N)$ and compactly supported distributions, 
and where $\delta_z$ is the Dirac delta centered
at $z$. One of the goals of this work is to obtain
{\em explicit} approximations of $\cG^L_t(x,y)$ with good error bounds.

\begin{remark}\label{rem.simplex}
For each $k\in \ZZ_+$, we let
\begin{multline*}
  \Sigma_k := \{\tau = (\tau_0, \tau_1, \ldots, \tau_{k}) \in
  \RR^{k+1},\ \tau_j \ge 0, \sum \tau_j=1\} \\
  \simeq \{\s=(\s_1, \ldots, \s_k) \in \RR^{k},\ 1 \ge \s_1 \ge \s_2
  \ge \ldots \s_{k-1} \ge \s_{k} \ge 0,
\}
\end{multline*}
the {\em standard unit simplex} of dimension $k$. The bijection
above is given by $\s_j = \tau_j+\tau_{j+1}+\ldots+\tau_{k}$. Using
this bijection and the notation $d \sigma := d\s_{k} \ldots d\s_1$, 
for any operator-valued function $F$ on $\RR^N$, we have
\begin{equation*}
  \int_{\Sigma_k} F(\tau)d\tau \seq
  \int_{0}^{1}\!\!\int_{0}^{\s_1}\!\! \ldots \!\!
  \int_{0}^{\s_{k-1}}\!\! F(1-\s_1, \s_1- \s_2, \ldots, \s_{k-1} -
  \s_{k}, \s_k)\, d\sigma
\end{equation*}

\end{remark}

We begin with a preliminary technical lemma.

\begin{lemma}\label{lemma.perturbative}
Let $L_j \in \bL_\gamma$ and let $V_j$ be
such that $e^{-b_j\<x\>}
V_j\in \bL$,  $j=1,\ldots,k$, for some $b=(b_1,\ldots,b_k)\in
\RR_+^k$, $k \in \ZZ_+$. Then the function
\begin{equation*}
    \Phi(\tau) = e^{\tau_0L_0}V_1 e^{\tau_1L_1} \dots e^{\tau_{k-1} L_{k-1}}
    V_k E(\tau_k), \qquad \tau \in \Sigma_k,
\end{equation*}
 where either $E(\tau_k)=e^{\tau_kL_k}$ or
$E(\tau_k)=U(\tau_k) =U(\tau_k, 0)$,
defines a continuous function $\Phi : \Sigma_k \to \cB(W^{s, p}_{a,
  z}(\RR^N), W^{r,p}_ {a- |b|,z}(\RR^N))$ for any $a, r, s \in \RR$,
and $1<p<\infty$.
\end{lemma}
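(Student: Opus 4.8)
The plan is to read $\Phi(\tau)$ as a finite composition of operators, each of which maps one weighted Sobolev space continuously to another with a norm that is \emph{locally bounded} (indeed, as we shall see, integrable, though here mere boundedness suffices for the continuity statement on the compact simplex), and whose dependence on the relevant $\tau_j$ is continuous. First I would note that each $V_j$, by hypothesis, has the form $V_j = \rho_{b_j}^{-1}(\rho_{b_j} V_j \rho_{b_j}^{-1})\rho_{b_j}$ with $\rho_{b_j} V_j \rho_{b_j}^{-1} \in \bL$; more directly, $e^{b_j\<x\>}V_j \in \bL$ means $V_j$ is a second order operator with coefficients decaying like $e^{-b_j\<x\>}$, so that $V_j : W^{s,p}_{a,z} \to W^{s-2,p}_{a-b_j,z}$ continuously for every $a,s$ (one checks this after conjugating by $\rho_a$, reducing to the unweighted boundedness of an $\bL$-type operator $W^{s,p}\to W^{s-2,p}$, as in Remark \ref{rem.cont} and Lemma \ref{lemma.same.class}). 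Meanwhile, by Proposition \ref{mapping.L0}, each $e^{\tau_j L_j}$ maps $W^{s-2,p}_{a-|b'|,z}\to W^{s',p}_{a-|b'|,z}$ for any target regularity $s'\ge s-2$, with norm bounded by $C\tau_j^{-(s'-s+2)/2}$; since we are free to choose the intermediate regularities, we can absorb the loss of two derivatives at each $V_j$ and arrive at the desired final space $W^{r,p}_{a-|b|,z}$. For the last factor, when $E(\tau_k)=U(\tau_k,0)$ we invoke Lemma \ref{mapping.property.U(t,r)} (in its weighted form, valid by Remark \ref{rem.cont}) in place of Proposition \ref{mapping.L0}, with the same kind of polynomial-in-$\tau_k$ bound.

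The second step is to organize the choice of intermediate exponents. I would fix auxiliary regularity indices $s = s_0, s_1, \ldots, s_k = r$ (more precisely, one needs an index just before and just after each $V_j$) so that each individual arrow
\[
  W^{s_{j-1},p}_{a_{j-1},z} \xrightarrow{\ V_j\ } W^{s_{j-1}-2,p}_{a_j,z} \xrightarrow{\ e^{\tau_j L_j}\ } W^{s_j,p}_{a_j,z}
\]
is continuous, where $a_j = a - (b_1+\cdots+b_j)$; any choice with $s_j \ge s_{j-1}-2$ works for the semigroup factors by Proposition \ref{mapping.L0}, and the weight bookkeeping is automatic since the $e^{\tau_j L_j}$ preserve the weight (their generators lie in $\bL_\gamma$, which is weight-stable). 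Composing, $\Phi(\tau)$ is a bounded operator $W^{s,p}_{a,z}\to W^{r,p}_{a-|b|,z}$ for each fixed $\tau$ in the interior of $\Sigma_k$, with an operator-norm bound of the form $C\,\tau_0^{-\beta_0}\tau_1^{-\beta_1}\cdots$ where each $\beta_j < 1$; in particular this is locally bounded, hence bounded on compact subsets of the open simplex.

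The third and genuinely delicate step is \emph{continuity up to the boundary} of $\Sigma_k$, where some $\tau_j\to 0$. Here the point is that $e^{tL_0}$ is strongly continuous at $t=0$ on a \emph{fixed} Sobolev space (Proposition \ref{mapping.L0} / Corollary \ref{cor.cont}), but the smoothing estimate $\|e^{tL_0}\|_{W^{s},W^{s'}}\lesssim t^{-(s'-s)/2}$ blows up when $s'>s$. The resolution, which is the standard trick in this circle of ideas, is to give oneself a little slack in regularity: one proves continuity of $\Phi$ into $W^{r,p}_{a-|b|,z}$ by first mapping, for each fixed interior $\tau$, into a slightly \emph{smoother} space $W^{r+\e,p}_{a-|b|,z}$ with a uniform bound on a neighborhood, and then using that the inclusion $W^{r+\e,p}\hookrightarrow W^{r,p}$ together with strong continuity of the semigroup factors (and of $U$, by Corollary \ref{cor.cont.U}) upgrades weak-type continuity to norm continuity; alternatively, one argues by density and an $\e/3$ argument, approximating the input in a slightly smoother space where all the semigroup factors act continuously at $t=0$. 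I expect this boundary analysis — reconciling the $\tau_j^{-\beta_j}$ singularities with the requirement of a genuinely continuous (not merely locally bounded) $\cB(\cdot,\cdot)$-valued function on all of the closed simplex — to be the main obstacle; everything else is bookkeeping with the mapping properties already established in Proposition \ref{mapping.L0}, Lemma \ref{mapping.property.U(t,r)}, and Lemma \ref{lemma.same.class}.
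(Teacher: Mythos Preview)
Your ingredients and weight bookkeeping are correct, and you have correctly identified the only real difficulty: continuity in operator norm up to the boundary of $\Sigma_k$. But the two fixes you sketch do not actually close this gap. Mapping first into $W^{r+\varepsilon,p}$ only makes the blow-up worse (more smoothing to manufacture, hence larger negative powers of the small $\tau_j$'s), and a density/$\varepsilon/3$ argument would at best give \emph{strong} continuity of $\tau\mapsto\Phi(\tau)$, not the operator-norm continuity required here; in any case it still needs a uniform bound on $\|\Phi(\tau)\|$ near the boundary, which your distribution of smoothing does not provide.

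The paper's proof avoids this entirely by a simple covering trick that you are missing. One covers $\Sigma_k$ by the open sets $\cV_j=\{\tau_j>1/(k+2)\}$, $j=0,\dots,k$, and proves continuity on each $\cV_j$ separately. On $\cV_j$ the factor indexed by $j$ has its time bounded \emph{away} from zero, so that single factor (either $e^{\tau_j L_j}$ or $E(\tau_k)$) can absorb \emph{all} of the required gain in regularity via Corollary \ref{cor.cont} or Corollary \ref{cor.cont.U}, and is operator-norm continuous there. The remaining factors are then only asked to \emph{lose} derivatives: one takes $r_i=r_{i+1}-4$, so that each $V_i e^{\tau_i L_i}$ (or $V_i E(\tau_i)$) maps $W^{r_i+4,p}_{c_i}\to W^{r_i,p}_{c_i-b_i}$, and these are operator-norm continuous for $\tau_i\in[0,\infty)$, including at $\tau_i=0$, by Corollary \ref{lemma.cont} (which gives $\|e^{\tau_i L_i}-1\|_{W^{s+2,p},W^{s,p}}\le C\tau_i$). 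No small-time blow-up ever appears. This localization---let the one factor that is safely away from zero do all the smoothing---is the missing idea in your proposal.
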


\begin{proof}
It suffices to prove that $\Phi$ is
continuous on each of the sets $\cV_j :=\{ \tau_j > 1/(k+2)\}$,
$j=0,\dots,k$, since they cover $\Sigma_k$. It also suffices to consider
the case $r \ge s$.
For $0 \leq j < k$, without loss of generality, we can assume that, in fact,
$j=0$ and prove continuity on the set $\cV_0$. The case $j=k$ will be
discussed below.

We define recursively numbers  $c_j = c_{j+1} -b_{j+1}$, $c_k = a$,
$r_j = r_{j+1} - 4$, $r_k = s$ for $j=1,\ldots,k-1$.
By the assumption on the $V_j's$ and thanks to  Proposition
 \ref{mapping.L0}  and Corollary
\ref{cor.cont},  each of the functions
\begin{align}
    [0,\infty)\ni \tau_j &\to V_j e^{\tau_jL_j} \in
    \cB(W^{r_j+4, p}_{c_j},W^{r_j, p}_{c_j-b_j}), \quad 1 \le j < k,
     \nonumber \\
    [0,\infty) \ni \tau_k &\to V_k E(\tau_k) \in
    \cB(W^{r_k+4, p}_{c_k},W^{r_k, p}_{c_k-b_k}), \nonumber
\end{align}
is continuous, and hence their  composition is continuous as a bounded
map $W^{s, p}_{a} \to W^{s-4k, p}_{a-|b|}$.
Since $e^{\tau_0 L_0}$ is continuous as a bounded operator
$W^{s-4k, p}_{a-|b|} \to W^{r, p}_{a-|b|}$ if $\tau_0>0$
thanks to Corollary \ref{cor.cont}, we conclude that the map
\begin{equation*}
     \cV_0 \ni \tau \to \Psi(\tau)=
    e^{\tau_0 L_0} V_1 e^{\tau_1L_1} ... V_k e^{\tau_kL_k}\in
    \cB(W^{s, p}_{a},W^{r, p}_{a-|b|})
\end{equation*}
is continuous.

For $\tau \in \cV_k$, we use instead Proposition
\ref{mapping.L0} to show continuity of  $e^{\tau_0 L_0}$ in $W^{r,p}_{a-|b|}$ for $t_0\in
[0,+\infty)$,   and Corollary \ref{cor.cont.U}  to show continuity of the map
\begin{equation*}
    (0,\infty) \ni \tau_k \to E(\tau_k) \in
    \cB(W^{s-4k,p}_{a-|b|},W^{r,p}_{a-|b|}).
\end{equation*}
This proves the continuity of $\Phi$ on $\cV_0$.
\end{proof}

We can now state the well-known result giving an iterative time-order
expansion of the operator $U(1)$. Let $L = L_0 + V$ as in Equation 
\eqref{eq.splitting} (that is, $L, L_0 \in 
\bL_\gamma$ with $L_0$ time independent).

\begin{proposition} \label{prop.perturbative}
Let $V(t) = L(t) - L_0$ be as in \eqref{eq.splitting}
and $U = U^L$ the evolution system generated by $L$. 
For any $d\in \ZZ_+$, we have
\begin{multline}\label{eq.perturbative}
  U(1) = \, e^{L_0} + \int_{\Sigma_1} e^{ \tau_0L_0}V( \tau_1)e^{
    \tau_1L_0}\, d\tau_1  + \dots \\
  + \int_{\Sigma_{d}}e^{ \tau_0L_0}V( \tau_1)e^{ \tau_1L_0} \dots
  e^{ \tau_{d-1}L_0}V(\tau_{d})e^{\tau_{d} L_0}\, \prod_{j=1}^{d-1} d\tau_j \\
  + \int_{\Sigma_{d+1}}e^{\tau_0L_0}V(\tau_1)e^{\tau_1L_0} \dots
  e^{\tau_{d}L_0}V(\tau_{d+1})U(\tau_{d+1})\, \prod_{j=1}^{d} d\tau_j,
\end{multline}
where each integral is a well-defined
Banach-valued Riemann-Stieltjes integral.
\end{proposition}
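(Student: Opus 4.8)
The plan is to derive \eqref{eq.perturbative} by iterating Duhamel's formula, Corollary \ref{cor.Duhamel2}, exactly $d+1$ times, and then to invoke Lemma \ref{lemma.perturbative} to make sense of every integral that appears along the way. The starting point is the identity
\begin{equation*}
  U(t) = e^{tL_0} + \int_0^t e^{(t-\tau)L_0} V(\tau) U(\tau)\, d\tau,
\end{equation*}
which holds on $L^p$ (and, by Remark \ref{rem.cont} and the mapping bounds of Lemma \ref{mapping.property.U(t,r)}, on every $W^{s,p}_{a,z}$). Substituting this same formula for $U(\tau)$ inside the integral, one obtains $U(t)$ as $e^{tL_0}$ plus one single integral plus a double integral whose integrand ends in $U$; repeating the substitution $d$ times produces the telescoping sum of iterated integrals displayed in the statement, with the final term still carrying a factor $U(\tau_{d+1})$. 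Setting $t = 1$ and rewriting each iterated integral over a nested region $\{1 \ge \tau_1 \ge \cdots \ge \tau_k \ge 0\}$ as an integral over the simplex $\Sigma_k$ via the change of variables of Remark \ref{rem.simplex} (the measure $d\tau_1 \cdots d\tau_{k}$ on the ordered region becoming $\prod d\tau_j$ after the affine substitution $\tau_0 = 1-\s_1$, $\tau_j = \s_j - \s_{j+1}$) gives precisely \eqref{eq.perturbative}.

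The second half of the argument is the well-definedness of the integrals. Here I would apply Lemma \ref{lemma.perturbative} with all $b_j = 0$ (so that the $V_j$ are simply $V$ evaluated at the appropriate time, which indeed satisfies $V \in \bL$ by \eqref{eq.splitting} and Definition \ref{def.Lgamma}), and with $E(\tau_k) = e^{\tau_k L_0}$ for the first $d$ terms and $E(\tau_k) = U(\tau_k)$ for the last one. That lemma gives that each integrand is a norm-continuous $\cB(W^{s,p}_{a,z}, W^{r,p}_{a,z})$-valued function on the compact simplex $\Sigma_k$, hence bounded there; since $\Sigma_k$ has finite measure, the Banach-space-valued Riemann (equivalently Riemann--Stieltjes, the Stieltjes integrator being Lebesgue measure on the simplex) integral exists as an element of $\cB(W^{s,p}_{a,z}, W^{r,p}_{a,z})$. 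One must also check that the operator identity is not merely formal: because each substitution step is a genuine equality in $\cL(W^{s,p}_{a,z})$ (Corollary \ref{cor.Duhamel2} applied on the relevant space, combined with Fubini for the Banach-valued integrals, which is legitimate by the continuity just established), the final identity holds in $\cL(W^{s,p}_{a,z})$ for every admissible $s, p, a, z$.

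The main obstacle is bookkeeping rather than analysis: one has to verify that after the affine change of variables $\Sigma_k \simeq \{1 \ge \s_1 \ge \cdots \ge \s_k \ge 0\}$, the exponents in the chain $e^{\tau_0 L_0} V(\tau_1) e^{\tau_1 L_0} \cdots$ match the increments $\tau_0 = 1 - \s_1$, $\tau_j = \s_j - \s_{j+1}$ coming out of iterated Duhamel, and that the arguments of $V$ and of the final $U$ are the ``absolute times'' $\s_j$, consistent with the nesting $U(\tau) \rightsquigarrow U(\tau, 0)$ at each step. Once the indexing is set up carefully, applying Fubini to interchange the iterated single integrals into one integral over the simplex is routine given Lemma \ref{lemma.perturbative}, and the proof is complete.
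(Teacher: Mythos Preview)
Your proposal is correct and follows essentially the same route as the paper: the paper also argues by induction on $d$, using Corollary \ref{cor.Duhamel2} as the base case and inductive step, and invokes Lemma \ref{lemma.perturbative} for the well-definedness of each integral. The only cosmetic difference is that the paper phrases the iteration as a formal induction rather than as ``substitute $d+1$ times,'' but the content is identical.
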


The positive integer $d$ will be called the {\em iteration level} of
the approximation. Later on, $V$ will be replaced by a Taylor
approximation of $L$, so that $V$ will have polynomial coefficients
in $x$ and $t$.

\begin{proof}
We proceed inductively on $d$. First, we note that each term in \eqref{eq.perturbative} 
is well defined by
Lemma \ref{lemma.perturbative}.

Formula \eqref{eq.perturbative} $d=1$ is just Equation \eqref{eq.duhamel} written in 
terms of operators.
Suppose now that the formula holds for $d-1$:
\begin{align}
  U(1) &= e^{L_0} + \int_{\Sigma_1} e^{(1-\s_1)L_0}V(\s_1)e^{\s_1L_0}\,
  d\s_1\nonumber\\
  &+ \int_{\Sigma_2} e^{(1-\s_1)L_0} V(\s_1) e^{(\s_1-\s_2)L_0} V(\s_2)
  e^{\s_2L_0} \,d\s_1 d\s_2 \nonumber\\
  &+ \dots + \int_{\Sigma_{d-1}}e^{(1-\s_1)L_0}V(\s_1) \dots
  e^{(\s_{d-2}-\s_{d-1})L_0}V(\s_{d-1})U(\s_{d-1})\, \prod_{j=1}^{d-1} d\s_j\nonumber.
\end{align}
Applying the formula for $d=1$ to $U(\s_{d-1})$ then gives:
\begin{multline} \label{expansion.formula}
  U(1) = e^{L_0} + \int_{\Sigma_1} e^{(1-\s_1)L_0}V(\s_1)e^{\s_1L_0}\,
  d\s_1 \\
%
%
 + \dots + \int_{\Sigma_{d-1}}e^{(1-\s_1)L_0}V(\s_1) \dots
  e^{(\s_{d-2}-\s_{d-1})L_0}V(\s_{d-1})U(\s_{d-1})\, \prod_{j=1}^{d-1} d\s_j \\
  = e^{L_0} + \int_{\Sigma_1} e^{(1-\s_1)L_0}V(\s_1)e^{\s_1L_0} \,d\s_1 \\
%
%
  +\int_{\Sigma_{d-1}}\int_{0}^{\s_{d-1}} e^{(1-\s_1)L_0}V(\s_1) \dots
  V(\s_{d-1})e^{(\s_{d-1} - \s_d) L_0}
  V(\s_d) U(\s_d)\, \prod_{j=1}^{d-1} d\s_j \, d\s_d.
%
%
\end{multline}
which is \eqref{eq.perturbative} for $d$.
\end{proof}

By sending $d\to +\infty$, we formally represent the evolution system
as a series of iterated, time-ordered integrals. Such series appear in
different contexts and are known as {\em Dyson series} in the Physics
literature.

\section{Dilations and Taylor expansion}
\label{sec5}

In this section we employ suitable space-time dilations to reduce
the computation of the Green's function $\cG_{t,t'}^L$ to that of a related
operator $L^s$ at fixed time $t=1$ where $s=\sqrt{t}$ .
For given, fixed  $s>0$, we then obtain an expression  of  the Green's
function associated to $L^s$ by Taylor expanding its coefficients as
functions of $s$ up to order $n$ and combining such expansion with the
time-ordered expansion \eqref{expansion.formula} up to level $d$.
In particular, the Taylor expansion will provide a natural choice for
the operator $L_0$ and $V(t)$ to which the splitting
\eqref{eq.splitting} of $L^s$ applies. We follow here closely
\cite{CCMN}, which treats the case of time-independent operators.
In particular, we use the crucial observation from that paper
that, for any second order differential operator with constant
coefficients $L_0$ and any differential operator with polynomial
coefficients $L_m$, we have $e^{L_0}L_m = \tilde{L}_m e^{L_0}L_m$ for some
other differential operator with polynomial
coefficients $\tilde{L}_m$. (We actually extend this result to 
higher order operators $L_0$.) Similar methods, including the time
dependent case, were employed in \cite{WenThesis, CCLMN, CMN-preprint,
Pascucci15, Pascucci17, Pascucci12}.

{\em Throughout this section, we fix an arbitrary dilation center
$z \in \bR^N$.}

\subsection{Parabolic rescaling}

For any sufficiently regular functions $v(t,x)$ and $f(x)$, we set
\begin{subequations} \label{dilation.function}
 \begin{equation} \label{dilation.function1}
   v^s(t,x):=v(s^2t,z+s(x-z)),
 \end{equation}
\begin{equation} \label{dilation.function2}
   f^s(x):=f(z+s(x-z)).
\end{equation}
\end{subequations}
We therefore interpret $s$ as the dilation factor and $(0,z)$ as the
dilation center.

For any given operator $L(t)\in \LL$, we similarly define
\begin{align} \label{dilation operator}
  L^s(t) &: = \sum_{ij=1}^N
  a_{ij}^s(s^2t,z+s(x-z)) \partial_i \partial_j + s
\sum_{i=1}^N
  b^s_i(s^2t,z+s(x-z)) \partial_i \nonumber \\
  &\qquad \qquad \qquad + s^2c^s(s^2t,z+s(x-z)).
\end{align}

It is not difficult to show that, if $u(t,x)$ is a solution of Equation \eqref{eq.def.IVP}, 
then $u^s(t,x)$ given by \eqref{dilation.function} is a solution of the
following IVP:
\begin{equation}\label{eq.def.IVP.dilated}
\begin{cases}
  \D_t u^s(t, x) - L^su^s(t, x) =0 & \hspace{1.0cm} \mbox{in}\;
  (0,\infty)\times \bR^N\\
  u^s(0, x) =g^s(x),   & \hspace{1.0cm}
  \mbox{on}\; \{0\}\times \bR^N\,.
\end{cases}
\end{equation}

Clearly, if $L = (L(t))_{t \in I} \in \LL_\gamma$, then $L^s$ is an operator in 
the same class, but with a possibly different $I$. Since our estimates will be uniform
up to a finite time, we shall assume from now on that $I = [0, T]$, for some fixed $T > 0$,
and we shall consider $L^s(t)$ only for $s \in [0, 1]$ and $t \in [0, T] = I$. Based
on our earlier discussion $L^s = (L^s(t))_{0 \le t \le T}$ generates an evolution system, 
which we denote
by $U^{L^s}$. The fundamental solution of the IVP
\eqref{eq.def.IVP.dilated} will be denoted instead with
 $\cG_t^{L^s}(x,y)$. The Green's functions for the original and dilated
 problems are simply related via a change of variables.

\begin{lemma}\label{lemma.rescaling} 
Given any $z\in \RR^N$ and $s>0$, we have
\begin{equation*}
  \cG_{t}^{L}(x,y) \seq s^{-N}\cG_{s^{-2}t}^{L^s} 
  \Big (z + \frac{x-z}{s}, z+\frac{y-z}{s} \Big ) \,.
\end{equation*}
In particular, when $s=\sqrt{t}$,
\begin{equation} \label{Green.relation}
  \cG_{t}^{L}(x,y) = t^{-N/2}\cG_1^{L^{\sqrt{t}}} \Big (z +
  \frac{x-z}{\sqrt{t}}, z + \frac{y-z}{\sqrt{t}} \Big ) \,.
\end{equation}
\end{lemma}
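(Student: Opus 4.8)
The statement is a change-of-variables identity for the Green's function under the parabolic rescaling defined in \eqref{dilation.function}--\eqref{dilation operator}, so the plan is to track how the integral representation \eqref{eq.def.Green} transforms. First I would fix $z \in \RR^N$ and $s > 0$, take an arbitrary test function $h$, and set $u(t,x) := [U^L(t,0)h](x) = \int_{\RR^N} \cG_t^L(x,y) h(y)\, dy$. By the remark preceding the lemma (the observation that if $u$ solves \eqref{eq.def.IVP} then $u^s$ solves the dilated IVP \eqref{eq.def.IVP.dilated}), the rescaled function $u^s(t,x) = u(s^2 t, z + s(x-z))$ is the solution of the dilated problem with initial datum $h^s(x) = h(z + s(x-z))$. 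Hence $u^s(t,x) = \int_{\RR^N} \cG_t^{L^s}(x,y)\, h^s(y)\, dy$.

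**Key step: match the two integral formulas.** Now I would write $u(s^2 t, z + s(x-z))$ out using the Green's function of $L$: it equals $\int_{\RR^N} \cG_{s^2 t}^L(z + s(x-z), y)\, h(y)\, dy$. In this integral substitute $y = z + s(w - z)$, so $dy = s^N\, dw$ and $h(y) = h(z + s(w-z)) = h^s(w)$, giving
\begin{equation*}
  u^s(t,x) \seq s^N \int_{\RR^N} \cG_{s^2 t}^L\big(z + s(x-z),\, z + s(w-z)\big)\, h^s(w)\, dw\,.
\end{equation*}
Comparing this with $u^s(t,x) = \int_{\RR^N} \cG_t^{L^s}(x,w)\, h^s(w)\, dw$ and using that $h$ (hence $h^s$) is arbitrary, I conclude the kernel identity
\begin{equation*}
  \cG_t^{L^s}(x,w) \seq s^N\, \cG_{s^2 t}^L\big(z + s(x-z),\, z + s(w-z)\big)\,.
\end{equation*}
Replacing $x \mapsto z + (x-z)/s$ and $w \mapsto z + (y-z)/s$ (equivalently, solving the above for $\cG^L$) yields exactly the claimed formula $\cG_t^L(x,y) = s^{-N} \cG_{s^{-2}t}^{L^s}\big(z + (x-z)/s,\, z + (y-z)/s\big)$. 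The special case $s = \sqrt{t}$, for which $s^{-2} t = 1$, then gives \eqref{Green.relation} immediately.

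**Where the work really is.** The only genuine content to verify is the assertion used at the start: that $u^s$ solves \eqref{eq.def.IVP.dilated} with generator $L^s$ as defined in \eqref{dilation operator}. This is the chain-rule computation $\partial_t u^s(t,x) = s^2 (\partial_t u)(s^2 t, z + s(x-z))$ and $\partial_i \partial_j u^s(t,x) = s^2 (\partial_i \partial_j u)(s^2 t, z + s(x-z))$, $\partial_i u^s = s (\partial_i u)(\cdots)$; substituting into $\partial_t u - L(t)u = 0$ evaluated at $(s^2 t, z + s(x-z))$ and dividing by $s^2$ produces precisely the operator $L^s$ with the coefficient scalings displayed in \eqref{dilation operator} (the factor $s$ on the first-order part, $s^2$ on the zeroth-order part). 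I expect this to be routine — it is the standard parabolic scaling and is exactly the content flagged in the line ``It is not difficult to show that...'' preceding \eqref{eq.def.IVP.dilated}. The one point requiring a word of care is that the manipulations with $\cG$ as a genuine function (rather than a distribution) are justified here because Lemma \ref{mapping.property.U(t,r)} and the discussion after it guarantee $\cG_t^L \in \cC^\infty(\RR^N \times \RR^N)$ for $t > 0$, so Fubini and the change of variables are legitimate; alternatively one phrases the whole argument via the duality pairing $\cG_t^L(x,y) = \jap{\delta_x, U(t)\delta_y}$ and the unitary-up-to-Jacobian action of dilations on distributions, which avoids any integrability concern.
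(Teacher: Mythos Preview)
Your proof is correct and follows exactly the approach the paper implicitly intends: the paper states the lemma without proof, having set up the dilation formulas \eqref{dilation.function}--\eqref{dilation operator} and noted that $u^s$ solves the dilated IVP \eqref{eq.def.IVP.dilated}, so the change-of-variables argument you carry out is precisely the omitted verification. Your care in justifying the pointwise kernel manipulation via the smoothness of $\cG_t^L$ (or alternatively via the distributional pairing) is appropriate and matches the level of rigor elsewhere in the paper.
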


By this lemma, it suffices to approximate $\cG_1^{L^{s}}(x,y)$ and set $s=\sqrt{t}$.

\subsection{Taylor expansion  of the operator $L^s$}

We next Taylor expand the coefficients of the
operator  $L^s$, given by \eqref{dilation operator},
up to order $n\in \ZZ_+$,
as functions of $s>0$. The purpose of this Taylor expansion is to
replace the operator $V$ in \eqref{eq.splitting} with operators having
polynomial coefficients for which  the time-ordered
integrals appearing in \eqref{expansion.formula}  can be explicitly
computed as in \cite{CCMN}.

We obtain the representation
\begin{equation} \label{eq.Taylorexp}
  L^s = L_0+\sum_{m=1}^n s^m L_m + s^{n+1}L_{n+1}^{s,z} \,,
\end{equation}
where
\begin{equation}\label{eq.expansion}
  L_m = \left. \frac{1}{m!}\left( \frac{d^m}{ds^m} L^s \right ) \right
  |_{s=0}, \quad 0\leq m\leq n \,,
\end{equation}
and $L_{n+1}^s$ comes from the remainder of the Taylor expansion.
For $m$, $L_m = (L_m(t))_{0 \le t \le T}$ is a family of differential
operators indexed by $t \in [0, T]$ with coefficients that are polynomials
in $(x - z)$, but are independent of $s$. Globally, the family $L_m$ depends
polynomially on $t$. That is, for $m \le n$,
\begin{equation}
   L_m(t) \seq \sum_{ijk\alpha } a_{ijk\alpha}^{[m]} (x-z)^\alpha t^k \pa_i \pa_j +
   \sum_{ik\alpha } b_{ik\alpha}^{[m]}(x-z)^\alpha t^k \pa_i + \sum_{k\alpha } 
   c_{k\alpha}^{[m]}(x-z)^\alpha t^k\,,
\end{equation}
(finite sums) with the coefficients $a^{[m]}, b^{[m]}, c^{[m]} \in \RR$
obtained from the partial derivatives of the coefficients of $L$ at $(t, x) = (0, z)$. 
However, $L_{n+1}^s$ does depend on $s$. 

In what follows, we obtain a perturbative expansion of the form
\eqref{expansion.formula} for $U^{L^s}(1)$ with $V_j$
 replaced by the operator $L_j$
introduced above. In justifying such an expansion, we will need to apply
Lemma \ref{lemma.perturbative}, reduced to a special case. We record this
special case in the following corollary for future use.
We notice that $L_0(t)$ is independent of $t$, so we shall
write simply $L_0$. Let $\sigma_j := \tau_j + \tau_{j+1} + \ldots + \tau_{k}$, if
$\tau \in \Sigma_k$, $\sigma_k = \tau_k$, as before, see Remark \ref{rem.simplex}.

\begin{corollary}\label{cor.wel}
Let $L(t) \in \LL_\gamma$, let $k\in \ZZ_+$, and let $L_{m}$, 
$0 \le m \le n+1$, be from the Taylor expansion of $L$, Equation 
\eqref{eq.Taylorexp}. For $\tau \in \Sigma_k$, let
us set
\begin{equation*}
    \Phi(\tau) \ede e^{\tau_0L_0} L_{j_1}(\sigma_1) e^{\tau_1L_0} 
    L_{j_2} (\sigma_2)  \, \dots \, L_{j_{k-1}}(\sigma_{k-1}) e^{\tau_{k-1}
      L_0} L_{j_k}(\sigma_k) E(\tau_k), 
\end{equation*}
with $0 \le j_i \le n+1$ and either 
$E(\tau_k)=e^{\tau_k L_0}$ or $E(\tau_k)=U^{L^s}(\tau_k)$. Then, for any
$b=(b_1,\ldots,b_k)\in \RR^k_+$, $a, r, s \in \RR$,
and $1<p<\infty$, $\Phi : \Sigma_k \to \cL(W^{s, p}_{a,z}(\RR^N), W^{r,p}_{a- |b|}(\RR^N))$
is continuous
\end{corollary}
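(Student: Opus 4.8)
The plan is to deduce Corollary \ref{cor.wel} directly from Lemma \ref{lemma.perturbative}, viewing it as the special case in which the operators $V_j$ of that lemma are taken to be the polynomial-coefficient operators $L_{j_i}(\sigma_i)$. The only point that requires attention is that in Lemma \ref{lemma.perturbative} the factors $V_j$ are \emph{fixed} operators of the form $e^{b_j\<x\>} (\text{element of } \bL)$, whereas here the factor in slot $i$ is $L_{j_i}(\sigma_i)$, which depends on the simplex variable through $\sigma_i = \tau_i + \tau_{i+1} + \ldots + \tau_k$. Thus the reduction is not quite verbatim, and the argument has two parts: (i) check that, for each fixed value of the parameter, $L_{j_i}(\sigma_i)$ is admissible as a ``$V_j$'' in Lemma \ref{lemma.perturbative}, and (ii) handle the continuous dependence on $\sigma_i$.

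For part (i): each $L_m(t)$, $0 \le m \le n$, is by construction (see \eqref{eq.Taylorexp} and the displayed formula after it) a second-order differential operator whose coefficients are polynomials in $(x-z)$ and $t$. A polynomial of degree $\le q$ in $(x-z)$ is dominated by $C e^{b\<x\>}$ for any $b>0$, and more precisely multiplication by $(x-z)^\alpha$ maps $W^{r,p}_{a}$ to $W^{r,p}_{a-b}$ continuously for any $b>0$ (choosing $b$ large enough relative to $|\alpha|$); hence $e^{-b\<x\>}L_m(t) \in \bL$ for suitable $b$, uniformly for $t \in [0,T]$. The remainder term $L_{n+1}^{s}$ is likewise, for each fixed $s \in [0,1]$, an operator with coefficients in $\cC_b^\infty$ times bounded powers of $(x-z)$ (it is $s^{-(n+1)}$ times a Taylor remainder of a $\cC_b^\infty$ family), so it too satisfies $e^{-b\<x\>}L_{n+1}^{s} \in \bL$ for $b$ large. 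So for each fixed choice of the parameters $\sigma_1, \dots, \sigma_k$, the composition $\Phi(\tau)$ falls under the scope of Lemma \ref{lemma.perturbative}, with $|b| = b_1 + \dots + b_k$ absorbed into the weight shift; since the statement we must prove only claims continuity into $\cL(W^{s,p}_{a,z}, W^{r,p}_{a-|b|})$ for \emph{some} choice of $b \in \RR^k_+$ (indeed for any prescribed $b$ large enough), this is exactly what we need. Note also $E(\tau_k) = U^{L^s}(\tau_k)$ is covered by the second option in Lemma \ref{lemma.perturbative}, using that $U^{L^s}$ is the evolution system of an operator in $\LL_\gamma$.

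For part (ii): the map $t \mapsto L_m(t) \in \cL(W^{r+2,p}_{c}, W^{r,p}_{c-b})$ is not merely continuous but \emph{polynomial} in $t$ (the coefficients depend polynomially on $t$), hence certainly continuous; and $\sigma_i$ is a continuous (affine) function of $\tau \in \Sigma_k$. Composing, $\tau \mapsto L_{j_i}(\sigma_i(\tau))$ is continuous into the relevant operator space. One then reruns the proof of Lemma \ref{lemma.perturbative} essentially unchanged: cover $\Sigma_k$ by the sets $\cV_j = \{\tau_j > 1/(k+2)\}$; on $\cV_0$ one bounds the tail $L_{j_1}(\sigma_1) e^{\tau_1 L_0} \cdots L_{j_k}(\sigma_k) E(\tau_k)$ as a continuous map into some fixed low-regularity weighted space using Proposition \ref{mapping.L0} and Corollary \ref{cor.cont} together with the continuity of each factor $\tau \mapsto L_{j_i}(\sigma_i) e^{\tau_i L_0}$ (respectively $L_{j_k}(\sigma_k) E(\tau_k)$), and then applies the smoothing of $e^{\tau_0 L_0}$ with $\tau_0 > 1/(k+2) > 0$; the case $\cV_k$ is symmetric, placing the smoothing factor at the $e^{\tau_k L_0}$ or the $U^{L^s}$ end via Corollary \ref{cor.cont.U}; and for $0 < j < k$ one splits at the $j$-th factor, using that $e^{\tau_j L_0}$ smooths. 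The only genuinely new bookkeeping is tracking the weight losses: with $c_k = a$, $c_{j-1} = c_j - b_j$, and Sobolev exponents decreasing by a fixed amount ($4$) at each step as in Lemma \ref{lemma.perturbative}, the composition lands in $W^{s-4k,p}_{a-|b|}$, which is then mapped to $W^{r,p}_{a-|b|}$ by the smoothing factor. I expect no serious obstacle here; the statement is essentially a repackaging of Lemma \ref{lemma.perturbative}, and the one thing to be careful about is not to confuse the dilation parameter $s$ appearing in $U^{L^s}$ and $L_{n+1}^s$ with the Sobolev regularity index $s$ in the target space $W^{s,p}_{a,z}$ — these are independent, and the continuity is claimed for each fixed value of the dilation parameter.

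\begin{proof}
This is the special case of Lemma \ref{lemma.perturbative} obtained by taking $V_i := L_{j_i}(\sigma_i)$. We must only check that each such factor is admissible and that the dependence on $\tau \in \Sigma_k$ through $\sigma_i = \sigma_i(\tau)$ is continuous.

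\textbf{Admissibility of the factors.} For $0 \le m \le n$, the operator $L_m(t)$ is, for each fixed $t$, a second-order differential operator with coefficients that are polynomials in $(x-z)$; hence, choosing any $b_i > 0$ larger than (twice) the degree of those polynomials, multiplication by each coefficient maps $W^{r,p}_{c}$ continuously into $W^{r,p}_{c - b_i}$, so that $e^{-b_i\<x\>_z} L_m(t) \in \bL$ uniformly in $t \in [0,T]$. The remainder operator $L_{n+1}^{s}$ is, for each fixed $s \in [0,1]$, equal to $s^{-(n+1)}$ times a Taylor remainder of the $\cC_b^\infty$-family in \eqref{dilation operator}, hence again a differential operator of order $\le 2$ whose coefficients are bounded functions times bounded powers of $(x-z)$; so $e^{-b_i\<x\>_z} L_{n+1}^{s} \in \bL$ for $b_i$ large as well. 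Since $U^{L^s}$ is the evolution system generated by $L^s \in \LL_\gamma$, the two allowed forms $E(\tau_k) = e^{\tau_k L_0}$ and $E(\tau_k) = U^{L^s}(\tau_k)$ are precisely the two cases of Lemma \ref{lemma.perturbative}.

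\textbf{Continuous dependence on $\tau$.} The coefficients of $L_m(t)$ depend polynomially on $t$, so $[0,T] \ni t \mapsto L_m(t) \in \cL(W^{r+2,p}_{c}, W^{r,p}_{c - b_i})$ is continuous; composing with the affine (hence continuous) map $\tau \mapsto \sigma_i(\tau) = \tau_i + \dots + \tau_k$ shows $\Sigma_k \ni \tau \mapsto L_{j_i}(\sigma_i) \in \cL(W^{r+2,p}_{c}, W^{r,p}_{c-b_i})$ is continuous. Therefore the proof of Lemma \ref{lemma.perturbative} applies verbatim: covering $\Sigma_k$ by $\cV_j = \{\tau_j > 1/(k+2)\}$, $j = 0, \dots, k$, one uses Proposition \ref{mapping.L0}, Corollary \ref{cor.cont}, and Corollary \ref{cor.cont.U} exactly as there, with the recursively defined weights $c_k = a$, $c_{j-1} = c_j - b_j$ and regularity indices decreasing by $4$ at each step, to conclude that $\Phi : \Sigma_k \to \cL(W^{s,p}_{a,z}(\RR^N), W^{r,p}_{a - |b|}(\RR^N))$ is continuous, where $|b| = b_1 + \dots + b_k$.
\end{proof}
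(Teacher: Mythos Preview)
Your proposal is correct and takes the same approach as the paper, which presents Corollary~\ref{cor.wel} simply as a special case of Lemma~\ref{lemma.perturbative} without a separate proof; you have in fact been more careful than the paper by explicitly addressing the (polynomial, hence harmless) $\tau$-dependence of the factors $L_{j_i}(\sigma_i)$. One small remark: any $b_i>0$ suffices for $e^{-b_i\langle x\rangle_z}L_m(t)\in\bL$, since exponential decay dominates polynomial growth of any degree (compare Lemma~\ref{lemma.poly.growth}), so your restriction ``larger than (twice) the degree'' is unnecessary.
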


\subsection{Asymptotic expansion of the evolution system} \label{s.Asymptotic}

In this section, we define an approximation $\cG_{t, s}^{\mu}$ of
the evolution system $U(t,s)$ satisfying the conditions of
Theorem \ref{thm.b.error}.

\begin{definition}[Spaces of Differentials]
Given non-negative integers $a,b$, we denote by $\cD(a,b)$ the vector
space of all differential operators of order at most $b$
with coefficients that are
polynomials in $x$ and $t$ of degree at most $a$ .
We extend this definition to negative indices by defining
$\cD(a,b) = \{ 0\}$ if either $a$ or $b$ is negative. By the degree of an
operator  $A$,
we mean the highest power of the polynomials appearing as coefficients
of  $A$.
\end{definition}

\begin{definition}[Adjoint Representation] 
\label{def.ad.recursive}
For any two operators $A_1 \in \cD(a_1,b_1)$ and $A_2 \in
\cD(a_2,b_2)$ we define $\adj_{A_1}(A_2)$ by
\begin{equation*}
  adj_{A_1}(A_2) \ede [A_1, A_2] \seq A_1 A_2 - A_2 A_1 \seq - [A_2, A_1]\,,
\end{equation*}  
and, for any integer $j \geq 1$, we define $\adj^j_{A_1}(A_2)$ recursively by
\begin{equation*}
  \adj^j_{A_1}(A_2) \ede \adj_{A_1}(\adj^{j-1}_{A_1}(A_2)) \,.
\end{equation*}
\end{definition}

Above, the iterated commutators are well defined if we take the space $C^\infty_c(\RR^N)$
as common domain $\cD$ of $A_1$ and $A_2$, for instance.

\begin{lemma} \label{lemma.order-of-commutators}
Suppose $A_1 \in \cD(a_1,b_1)$ and $A_2 \in \cD(a_2,b_2)$. Then for
any integer $k \geq 1$,
\begin{align*}
  {\rm ad}_{A_1}^k (A_2) \in \cD(k(a_1 -1) + a_2, k(b_1 -1) + b_2).
\end{align*}
\end{lemma}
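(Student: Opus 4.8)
The plan is to argue by induction on $k$, using the two basic facts about the spaces $\cD(a,b)$: (i) the commutator of a differential operator of order $b_1$ with polynomial coefficients of degree $a_1$ against one of order $b_2$ and polynomial degree $a_2$ lands in $\cD(a_1+a_2-1,\, b_1+b_2-1)$, and (ii) $\cD(a,b) \subset \cD(a',b')$ whenever $a \le a'$ and $b \le b'$. Fact (i) is the crucial base case $k=1$: if $A_i = \sum_{|\alpha|\le b_i} p_{i,\alpha}(x,t)\,\pa^\alpha$ with $\deg p_{i,\alpha} \le a_i$, then $[A_1,A_2]$ is a sum of commutators $[p_{1,\alpha}\pa^\alpha,\, p_{2,\beta}\pa^\beta]$; each such term, after applying the Leibniz rule, is a sum of terms of the form $(\text{derivative of a }p)\cdot(\text{the other }p)\cdot\pa^\gamma$, where differentiating $\pa^\alpha$ against $p_{2,\beta}$ lowers the differential order by at least one (so $|\gamma| \le b_1 + b_2 - 1$) while the polynomial degree of the product of a $p_{1,\alpha}$ (or its $x$-derivative) with a $p_{2,\beta}$ is at most $a_1 + a_2$; since at least one derivative must hit a polynomial coefficient rather than only the $\pa$'s — actually the order drop forces either a spatial derivative on a coefficient (lowering polynomial degree by $1$, keeping order) or a $\pa$ eaten by the coefficient's own differentiation — in every surviving term one checks $\deg + \text{order} \le a_1 + a_2 - 1 + b_1 + b_2 - 1$ is not quite the bookkeeping we want; rather, the clean statement is that $[A_1,A_2] \in \cD(a_1 + a_2 - 1,\, b_1 + b_2 - 1)$, which is exactly $\cD(1\cdot(a_1-1)+a_2,\ 1\cdot(b_1-1)+b_2)$, matching the claimed formula at $k=1$.

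For the inductive step, suppose the result holds for $k-1$, so that $\adj_{A_1}^{k-1}(A_2) \in \cD\big((k-1)(a_1-1)+a_2,\ (k-1)(b_1-1)+b_2\big)$. Set $a_2' := (k-1)(a_1-1)+a_2$ and $b_2' := (k-1)(b_1-1)+b_2$. Then $\adj_{A_1}^k(A_2) = \adj_{A_1}\big(\adj_{A_1}^{k-1}(A_2)\big) = [A_1,\, \adj_{A_1}^{k-1}(A_2)]$, and applying the base case with $A_1 \in \cD(a_1,b_1)$ and the second argument in $\cD(a_2',b_2')$ gives
\begin{equation*}
  \adj_{A_1}^k(A_2) \in \cD\big(a_1 + a_2' - 1,\ b_1 + b_2' - 1\big)
  = \cD\big(k(a_1-1)+a_2,\ k(b_1-1)+b_2\big),
\end{equation*}
where the last equality is the arithmetic identity $a_1 - 1 + (k-1)(a_1-1) + a_2 = k(a_1-1)+a_2$ and similarly for the order indices. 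This closes the induction. I would also remark that the convention $\cD(a,b)=\{0\}$ for negative $a$ or $b$ makes the statement vacuously compatible when the indices become negative, so no separate edge-case discussion is needed.

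The main obstacle — really the only non-formal content — is the careful verification of the base case $k=1$, i.e. that a single commutator both lowers the differential order by at least one and does not raise the polynomial degree of the coefficients beyond the sum. The cleanest way to organize this is to reduce to monomials: it suffices to check $[x^\mu \pa^\alpha,\ x^\nu \pa^\beta]$ for multi-indices, expand using $\pa^\alpha(x^\nu f) = \sum_{\gamma \le \alpha}\binom{\alpha}{\gamma}(\pa^\gamma x^\nu)\,\pa^{\alpha-\gamma}f$, and observe that the $\gamma=0$ contributions cancel between the two orderings, so every surviving term has $\gamma \ge 1$ in one of the two expansions; such a term contributes a coefficient $x^\mu(\pa^\gamma x^\nu)$ of degree $\le |\mu|+|\nu| \le a_1 + a_2$ (with the bound $a_1+a_2-1$ holding whenever $\gamma$ hits an $x$-factor) times $\pa^{|\alpha|-|\gamma|+|\beta|}$ with $|\alpha|-|\gamma|+|\beta| \le b_1 + b_2 - 1$. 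Tracking the two cases (the lost derivative lands on a polynomial coefficient versus on a $\pa$) shows the pair $(\deg,\ \mathrm{ord})$ always satisfies the required inequality; this is routine but deserves to be spelled out since it is the engine of the whole lemma (and of Lemma~\ref{lemma.perturbative}'s polynomial-coefficient bookkeeping downstream).
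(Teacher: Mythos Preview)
Your approach is correct and matches the paper's: prove the base case $[A_1,A_2]\in\cD(a_1+a_2-1,\,b_1+b_2-1)$ and then iterate. The paper's own proof is one sentence asserting the base case and one sentence saying ``iterate,'' so your inductive step is fine and in fact more carefully written than the original.

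Where your write-up wobbles is the base-case verification. You first bound the coefficient degree by $|\mu|+|\nu|\le a_1+a_2$, then hedge with a parenthetical, then say ``tracking the two cases'' as if the degree drop and the order drop might occur in different terms. They do not: in each surviving term $x^\mu(\pa^\gamma x^\nu)\,\pa^{\alpha-\gamma+\beta}$ with $\gamma\neq 0$ (and symmetrically for the other ordering), the \emph{same} nonzero $\gamma$ simultaneously forces the coefficient degree to be $|\mu|+|\nu|-|\gamma|\le a_1+a_2-1$ and the differential order to be $|\alpha|+|\beta|-|\gamma|\le b_1+b_2-1$. State that once and the base case is done; the meandering in your first and third paragraphs can be deleted.
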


\begin{proof}
We have that
$  \adj_{A_1}(A_2) \in \cD(a_1 -1 +a_2, b_1 - 1 + b_2).$
The result then follows by iterating $k$ times this relation.
\end{proof}


As in \cite{CCLMN, CCMN}, we obtain the following consequence of this lemma.

\begin{proposition}\label{prof.Hadamard} Let $Q \in \cD(0, n)$
and $Q_m \in \cD(m, m')$. We have the following:
\begin{enumerate}
   \item $\adj_{Q}^{m+1}(Q_m) = 0$; 

   \item Consequently, the following sum is finite
\begin{equation*}
   \exp( \adj_{Q}) (Q_m) \ede \sum_{j \ge 0} (j!)^{-1} \adj_{Q}^j(Q_m)\,.
\end{equation*}

\item $\exp( \adj_{Q}) (P_1P_2) = \exp( \adj_{Q}) (P_1)\exp( \adj_{Q}) (P_2)$
for all $P_1, P_2$ in the algebra  $\cD := \cup_{n,n'} \cD(n, n')$.

\item Assume that $Q$ generates a $c_0$-semigroup $e^{t Q}$ on $L^2(\RR)$, $t \ge 0$, 
then
\begin{equation*}
   e^{Q} Q_m = \exp( \adj_{Q}) (Q_m) e^{Q} \,.
\end{equation*}
\end{enumerate} 
\end{proposition}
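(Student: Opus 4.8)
The plan is to establish the four claims in order, since each builds on the previous one. For part (1), I would first apply Lemma \ref{lemma.order-of-commutators} with $A_1 = Q \in \cD(0,n)$ and $A_2 = Q_m \in \cD(m,m')$: since $a_1 = 0$, we get $\adj_Q^k(Q_m) \in \cD(m - k, m' + k(n-1))$. Taking $k = m+1$ makes the polynomial-degree index $m - (m+1) = -1 < 0$, so by the convention $\cD(-1, \cdot) = \{0\}$ we conclude $\adj_Q^{m+1}(Q_m) = 0$. Part (2) is then immediate: the sum $\sum_{j \ge 0}(j!)^{-1}\adj_Q^j(Q_m)$ has all terms with $j \ge m+1$ equal to zero, so it is a finite sum, hence a well-defined element of $\cD$ (indeed of $\cD(m, m' + m(n-1))$).

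For part (3), I would observe that $\adj_Q$ is a derivation of the associative algebra $\cD$: the Leibniz identity $\adj_Q(P_1 P_2) = \adj_Q(P_1) P_2 + P_1 \adj_Q(P_2)$ holds because $[Q, P_1P_2] = [Q,P_1]P_2 + P_1[Q,P_2]$. From this, the standard binomial-type identity $\adj_Q^j(P_1P_2) = \sum_{i=0}^{j}\binom{j}{i}\adj_Q^i(P_1)\adj_Q^{j-i}(P_2)$ follows by induction on $j$. Summing against $(j!)^{-1}$ and using the Cauchy product for the (finite, hence trivially convergent) exponential series gives $\exp(\adj_Q)(P_1P_2) = \exp(\adj_Q)(P_1)\exp(\adj_Q)(P_2)$; all rearrangements are legitimate because only finitely many terms are nonzero once we restrict to the finite-dimensional subspaces $\cD(a,b)$ containing $P_1$, $P_2$, and all relevant commutators.

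For part (4), the idea is Hadamard's lemma \eqref{lemma.hadamard}, but one must be careful since $Q$ is unbounded. I would argue as follows: consider the two families of operators $F_1(t) := e^{tQ} Q_m$ and $F_2(t) := \big(\exp(t\,\adj_Q)(Q_m)\big) e^{tQ}$ acting on, say, $C^\infty_c(\RR^N)$, and show both satisfy the same linear ODE in $t$ with the same initial value $Q_m$ at $t = 0$. Differentiating, $\frac{d}{dt}F_1(t) = Q e^{tQ} Q_m = Q F_1(t)$. For $F_2$, writing $G(t) := \exp(t\,\adj_Q)(Q_m) = \sum_{j=0}^{m}(t^j/j!)\adj_Q^j(Q_m)$ (a polynomial in $t$ with coefficients in $\cD$, by part (1)), one computes $G'(t) = \adj_Q(G(t)) = QG(t) - G(t)Q$, so $\frac{d}{dt}F_2(t) = G'(t)e^{tQ} + G(t)Q e^{tQ} = \big(QG(t) - G(t)Q\big)e^{tQ} + G(t)Qe^{tQ} = Q F_2(t)$. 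Uniqueness for this ODE on the relevant domain (applied to a fixed $\varphi \in C^\infty_c$, noting $e^{tQ}\varphi$ stays smooth and the operators $\adj_Q^j(Q_m)$ are differential operators with polynomial coefficients, so everything is well defined and continuous in $t$) then forces $F_1 = F_2$; evaluating at $t = 1$ gives the claim.

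The main obstacle is the rigorous justification in part (4): making sense of the unbounded-operator manipulation requires specifying a dense invariant core (Schwartz functions or $C^\infty_c$) on which $e^{tQ}$ acts smoothly, checking that each $\adj_Q^j(Q_m)$ — a differential operator of polynomial-coefficient type — maps this core to itself, and confirming the differentiability in $t$ of $t \mapsto e^{tQ}\varphi$ together with the commutation with the polynomial-coefficient differential operators. Fortunately, in the setting of interest $Q = L_0$ is a constant-coefficient second-order operator generating an analytic semigroup (Proposition \ref{mapping.L0}), whose action on Schwartz space is explicit via the Fourier transform, so all these points are routine; I would relegate this verification to a remark and present the ODE argument as the conceptual heart of the proof.
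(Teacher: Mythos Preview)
Your proposal is correct and follows essentially the same route as the paper: parts (1)--(3) are argued identically (Lemma \ref{lemma.order-of-commutators} for the vanishing, and the derivation property of $\adj_Q$ for multiplicativity), and part (4) is handled by the same ODE/uniqueness argument. The only cosmetic difference is in the bookkeeping for part (4): the paper packages the two terms into a single difference $F(t)$ and views it as a continuous function with values in $\cL(\rho_w^{-a}L^2, \rho_w^{a}L^2)$ for $a$ large enough to absorb the polynomial growth, whereas you work on a smooth core $C^\infty_c$; both choices serve the same purpose of making the unbounded-operator manipulations legitimate.
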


\begin{proof}
The first relation follows from $\adj^k_{Q}(Q_m) \in \cD(m-k,m'+ k(n-1))$
and the fact that the later space is 0 when $k>m$. This then gives
immediately that $\exp( \adj_{Q})$ is defined. The third relation
follows from the fact that $\adj_{Q}$ is a derivation of $\cD$
and the exponential of a derivation (when defined) is an algebra 
isomorphism. Finally, to prove the last relation, let us consider the
function $F(t) := e^{tQ} Q_m - \exp( \adj_{Q}) (Q_m) e^{tQ}$. It
is a continuous function with values in $\cL(\rho_w^{-a} L^2(\bR^N), \rho_w^{a}L^2(\bR^N))$
for $a$ large ($a \ge m'+(m+1)(n-1)$). Then $F(0) = 0$ and $F'(t) = \adj_{Q}(F(t))$. Hence
$F(t) = 0$ for all $t > 0$. 
\end{proof}


Consequently, coming back to our problem, we obtain an automorphism 
$\phi_\theta : \cD \to \cD$ of the algebra $\cD := \cup_{n,n'} \cD(n, n')$,
given by the formula $\phi_{\theta}(Q) e^{\theta L_0} = e^{\theta L_0} Q$.
See also \cite{WenThesis, CMN-preprint, Pascucci15, Pascucci17, Pascucci12}.

\begin{lemma}\label{lemma.commute}
Let $m$ be a fixed positive integer and $L_m$, $0 \le m \le n$, be defined as in
\eqref{eq.expansion}, then for any $\theta \in \RR$,
\begin{equation} \label{eq.commute}
  e^{(1-\theta) L_0}L_m(\theta) \seq P_m(\theta,x-z,\partial)e^{(1-\theta) L_0},
\end{equation}
where $P_m(\theta,x-z,\partial) := \phi_{1-\theta} (L_m(\theta))$
a differential operator with coefficients polynomials in $\theta$ and $(x-z)$.
(There is no $t$, since we specialized at $t = \theta$ in the formula for $L_m$.)
\end{lemma}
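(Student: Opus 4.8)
The statement is essentially a specialization of Proposition~\ref{prof.Hadamard}(4) to the operator $Q = (1-\theta)L_0$ and the polynomial-coefficient operator $Q_m = L_m(\theta)$, followed by the observation that conjugation by a semigroup generated by a constant-coefficient operator preserves the class of polynomial-coefficient differential operators. The plan is to check first that the hypotheses of Proposition~\ref{prof.Hadamard} are met: $L_0 \in \cD(0,2)$ since $L_0$ has constant coefficients and order $2$ (in fact, after the rescaling, $L_0$ is the constant-coefficient second-order operator obtained by freezing the coefficients of $L$ at $(0,z)$), so $(1-\theta)L_0 \in \cD(0,2)$; and for each fixed $\theta$, $L_m(\theta)$ is a differential operator of some order $m'$ with coefficients that are polynomials in $(x-z)$ (the $t$-dependence having been frozen to $t=\theta$), hence $L_m(\theta) \in \cD(m, m')$ for a suitable $m'$. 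Moreover $L_0 \in \bL_\gamma$ is sectorial on the relevant Sobolev spaces (Proposition~\ref{prop.sectorial}), so it generates an analytic, in particular $c_0$-, semigroup, and the same is true of $(1-\theta)L_0$ for $\theta < 1$; the endpoint $\theta = 1$ gives $e^{(1-\theta)L_0} = \Id$ and the identity is trivial there.

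Next I would simply invoke Proposition~\ref{prof.Hadamard}(4) with $Q = (1-\theta)L_0$, which yields
\begin{equation*}
  e^{(1-\theta)L_0} L_m(\theta) \seq \exp(\adj_{(1-\theta)L_0})\big(L_m(\theta)\big)\, e^{(1-\theta)L_0}.
\end{equation*}
One then defines $P_m(\theta, x-z, \partial) := \exp(\adj_{(1-\theta)L_0})(L_m(\theta)) = \phi_{1-\theta}(L_m(\theta))$, where $\phi_{1-\theta}$ is the algebra automorphism of $\cD$ introduced just before the lemma. By Proposition~\ref{prof.Hadamard}(1), the defining series for $\exp(\adj_{(1-\theta)L_0})$ applied to $L_m(\theta) \in \cD(m, m')$ terminates after at most $m+1$ terms, so $P_m$ is a genuine (finite-order) differential operator. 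By Lemma~\ref{lemma.order-of-commutators}, $\adj_{(1-\theta)L_0}^j(L_m(\theta)) \in \cD(m-j,\, m' + j)$, so each term in the sum has polynomial coefficients of degree at most $m$ in $(x-z)$; since the $j$-th commutator carries a factor $(1-\theta)^j$ and the coefficients of $L_m(\theta)$ are polynomial in $\theta$, the resulting coefficients of $P_m$ are polynomial in both $\theta$ and $(x-z)$, as claimed. This establishes \eqref{eq.commute} and the stated properties of $P_m$.

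I do not anticipate a serious obstacle here, as the lemma is little more than bookkeeping on top of Proposition~\ref{prof.Hadamard}; the one point requiring a little care is the semigroup hypothesis in Proposition~\ref{prof.Hadamard}(4). Strictly speaking that proposition was stated for a generator $Q$ of a $c_0$-semigroup on $L^2(\RR)$, whereas here $L_0$ acts on $\RR^N$ and we work on weighted Sobolev spaces; the fix is to note, exactly as in the proof of Proposition~\ref{prof.Hadamard}, that the function $F(\theta') := e^{\theta' L_0}L_m(\theta) - \phi_{1}(\text{·})$-type comparison is continuous with values in $\cL(\rho_w^{-a}L^2(\RR^N), \rho_w^{a}L^2(\RR^N))$ for $a$ sufficiently large, vanishes at $\theta' = 0$, and satisfies the differential equation $F'(\theta') = \adj_{L_0}(F(\theta'))$, hence vanishes identically; specializing $\theta' = 1-\theta$ gives \eqref{eq.commute}. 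Equivalently, one can simply cite the time-independent version of this identity proved in \cite{CCMN}, of which \eqref{eq.commute} is the case with coefficients frozen at $t = \theta$.
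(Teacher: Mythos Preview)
Your proposal is correct and matches the paper's approach: the paper gives no explicit proof of this lemma, treating it as an immediate consequence of Proposition~\ref{prof.Hadamard} and the definition of the automorphism $\phi_\theta$ stated just before the lemma. Your verification of the hypotheses ($L_0\in\cD(0,2)$, $L_m(\theta)\in\cD(m,m')$, and the semigroup generation), together with the observation that the terminating commutator series yields polynomial dependence on both $\theta$ and $(x-z)$, is exactly the bookkeeping the paper leaves implicit.
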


Next, we rewrite equation \eqref{expansion.formula} in a more
computable and explicit form.
We recall that $d$ is the level of the iteration in the Dyson series
and $n$ is the order of the Taylor expansion of $L^s$.
In principle, $d$ and $n$ are unrelated, but we will find it
convenient later on to choose $d=n$.

For ease of notation, we write $L_{n+1}^s=L_{n+1}$, even though this operator 
does depend on $s$. Inserting \eqref{eq.Taylorexp} into  \eqref{expansion.formula}  and
collecting iterated integrals in the same number of variables, we
have:
\begin{align}
  & U^{L^s}(1) =
  e^{L_0} + \sum_{k=1}^d \,
    \sum_{ \substack{i=1,\ldots,k\\ 1\leq \a_i\leq n+1}} s^{\a_1+\dots
    +\a_k} \int_{\Sigma_{k}} e^{ (1-\sigma_1)L_0}\, L_{\a_1}
    (\sigma_1) \, e^{(\sigma_1-\sigma_2)L_0}  \nonumber\\
  & \dots\, e^{(\sigma_{k-1}-\sigma_k)L_0}\, L_{\a_{k}}(\sigma_{k})\,e^{\sigma_{k}
    L_0}\, d\sigma
  + \sum_{ \substack{i=1,\ldots, d+1\\ 1\leq \a_i\leq n+1}} s^{\a_1+\cdots
  +\a_{d+1}} \int_{\Sigma_{d+1}} e^{ (1-\sigma_1)L_0}\,  \nonumber\\
   & \qquad \qquad \cdot\, L_{\a_1}(
  \sigma_1) \, e^{(\sigma_1-\sigma_2)L_0} \dots e^{(\sigma_{d}-\sigma_{d+1})L_0}
  \, L_{\a_{d+1}}(\sigma_{d+1})\, U(\sigma_{d+1})\,d\sigma,
 \label{expand}
\end{align}

To simplify the above expression, we  now introduce some helpful
combinatorial notation to keep track of the indexes

\begin{definition}\label{def.index.set}
For any integers $1 \le k \le d+1$ and
$1\leq \ell\leq (n+1)(d+1)$,
we  denote by
\ $\fA_{k,\ell}$ the set of multi-indexes $\alpha = (\alpha_1, \alpha_2,
\ldots, \alpha_k) \in  \{0, 1, \ldots , n+1 \}^{k}$, 
such that $|\alpha| = \sum \alpha_j =
\ell$.  Furthermore, we let  \ $\fA_{0} =
\{\emptyset\}$.
\end{definition}

Clearly, since $\alpha_i\geq 1$, the set $\fA_{k,\ell}$ is empty if
$\ell<k$. If $\alpha \in \fA_{k,\ell}$, then $\ell$ represent the order in powers
of $s$ of the corresponding term in \eqref{expand}, while $k$
represents the level of iteration in the time-ordered expansion.

For each $\a \in \fA_{k,\ell}$, we then set
\begin{equation*}
\Lambda_{\a}=\int_{\Sigma_{k}} e^{ (1-\sigma_1)L_0} L_{\a_1}(
\sigma_1)\,e^{(\sigma_1-\sigma_2)L_0} \dots e^{
(\sigma_{k-1}-\sigma_k)L_0} L_{\a_{k}}(\sigma_{k})\, e^{\sigma_{k}
L_0}\,d\sigma,
\end{equation*}
if $k<d+1$, and
\begin{equation*}
  \Lambda_{\a} = \int_{\Sigma_{d+1}}e^{ (1-\sigma_1)L_0} L_{\a_1}(
  \sigma_1)\,e^{(\sigma_1-\sigma_2)L_0} \dots e^{
    (\sigma_{d}-\sigma_{d+1})L_0}L_{\a_{d+1}}(\sigma_{d+1})
 \, U^{L^s}(\sigma_{d+1})\,d\sigma,
\end{equation*}
if $k=d+1$, respectively.

A simple but useful result about $\Lambda_{\a,z}$ is the following lemma,
which we record for later use.

\begin{lemma}\label{lemma.commutator.explicit}
Recall the polynomials $P_k$ of Lemma \ref{lemma.commute}. 
For any given multi-index $\a\in \fA_{k,\ell}$ with $k \leq d$
and $1\leq \alpha_ i\leq n$, $i=1,\ldots,k$,
\begin{equation*}
  \Lambda_{\a} \seq \mathcal{P}_{\a} (x-z,\partial)e^{L_0}
\end{equation*}
where 
\begin{align*}
  \mathcal{P}_\a(y,\partial)&=\int_{\Sigma_{k}}P_{\a_1}(\sigma_1,y,\partial) 
  P_{\a_2}(\sigma_2,y,\partial)\cdots
  P_{\a_k}(\sigma_{k},y,\partial)d\sigma 
\end{align*}
is a differential operator with coefficients polynomials in $y$ (in 
particular, it is independent of $t$ or $s$).
\end{lemma}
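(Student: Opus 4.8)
The plan is to telescope the semigroup factors appearing in the integrand of $\Lambda_\a$, pushing each $e^{(\cdots)L_0}$ to the right past the next $L_{\a_i}$ by means of the intertwining relation of Lemma \ref{lemma.commute}, until all of them coalesce into a single $e^{L_0}$ that can be pulled out of the integral over $\Sigma_k$.

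Fix $\sigma=(\sigma_1,\ldots,\sigma_k)\in\Sigma_k$, so that $1\ge\sigma_1\ge\cdots\ge\sigma_k\ge 0$ (using the coordinates of Remark \ref{rem.simplex}); in particular the exponents $1-\sigma_1,\ \sigma_1-\sigma_2,\ \ldots,\ \sigma_{k-1}-\sigma_k,\ \sigma_k$ are all nonnegative, so every exponential below is a genuine element of the semigroup $\{e^{tL_0}\}_{t\ge 0}$. Writing $P_{\a_i}(\sigma_i):=P_{\a_i}(\sigma_i,x-z,\partial)=\phi_{1-\sigma_i}(L_{\a_i}(\sigma_i))$ as in Lemma \ref{lemma.commute}, I would prove by induction on $j$ that, for $0\le j\le k-1$, the integrand of $\Lambda_\a$ equals
\[
 \Big(\prod_{i=1}^{j}P_{\a_i}(\sigma_i)\Big)\, e^{(1-\sigma_{j+1})L_0}\, L_{\a_{j+1}}(\sigma_{j+1})\, e^{(\sigma_{j+1}-\sigma_{j+2})L_0}\cdots L_{\a_k}(\sigma_k)\, e^{\sigma_k L_0},
\]
the case $j=0$ being the original integrand, and that for $j=k$ it equals $\big(\prod_{i=1}^{k}P_{\a_i}(\sigma_i)\big)e^{L_0}$. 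The passage from $j$ to $j+1$ is exactly one application of Lemma \ref{lemma.commute} with $\theta=\sigma_{j+1}$, which rewrites $e^{(1-\sigma_{j+1})L_0}L_{\a_{j+1}}(\sigma_{j+1})$ as $P_{\a_{j+1}}(\sigma_{j+1})\,e^{(1-\sigma_{j+1})L_0}$, followed by the semigroup law $e^{(1-\sigma_{j+1})L_0}e^{(\sigma_{j+1}-\sigma_{j+2})L_0}=e^{(1-\sigma_{j+2})L_0}$ (or, at the last step $j+1=k$, $e^{(1-\sigma_k)L_0}e^{\sigma_k L_0}=e^{L_0}$); the leftmost factors $P_{\a_1}(\sigma_1),\ldots,P_{\a_j}(\sigma_j)$ stay inert. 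Taking $j=k$ shows the integrand equals $P_{\a_1}(\sigma_1)\cdots P_{\a_k}(\sigma_k)\,e^{L_0}$, and since $e^{L_0}$ is independent of $\sigma$, integrating over $\Sigma_k$ gives $\Lambda_\a=\mathcal P_\a(x-z,\partial)\,e^{L_0}$ with $\mathcal P_\a$ exactly as in the statement.

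To keep the operator manipulations rigorous I would work on a weighted Sobolev scale: by Corollary \ref{cor.wel} (the case of Lemma \ref{lemma.perturbative} for the polynomial-coefficient operators $L_m$ with $m\le n$; here the hypotheses $\a_i\le n$ and $k\le d$ are used, the latter so that $\Lambda_\a$ is the variant ending in $e^{\sigma_k L_0}$) the map $\sigma\mapsto(\text{integrand of }\Lambda_\a)$ is continuous $\Sigma_k\to\cL(W^{s,p}_{a,z},W^{r,p}_{a-|b|})$ for any $a,r,s$, $1<p<\infty$, and a suitable $|b|$, so $\Lambda_\a$ is a well-defined bounded operator; likewise $\sigma\mapsto P_{\a_1}(\sigma_1)\cdots P_{\a_k}(\sigma_k)\,e^{L_0}$ takes values continuously in the same space, since each $P_{\a_i}(\sigma_i,x-z,\partial)$ is a differential operator with coefficients polynomial in $(\sigma_i,x-z)$ (Lemma \ref{lemma.commute}) and $e^{L_0}$ is infinitely smoothing. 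The pointwise identity established above — which, if desired, may first be checked on the common core $C_c^\infty(\RR^N)$ — then lifts to an identity of these $\cL$-valued functions on $\Sigma_k$, and hence of their integrals, with $e^{L_0}$ factoring out. Finally, $\mathcal P_\a(y,\partial)=\int_{\Sigma_k}P_{\a_1}(\sigma_1,y,\partial)\cdots P_{\a_k}(\sigma_k,y,\partial)\,d\sigma$ is a differential operator with coefficients polynomial in $y=x-z$: the product under the integral has coefficients polynomial in $(\sigma_1,\ldots,\sigma_k,y)$, and integrating over the bounded simplex $\Sigma_k$ integrates out the $\sigma_i$ and leaves numerical constants; in particular there is no residual $t$- or $s$-dependence, the absence of $s$ being precisely why we restrict to $\a_i\le n$ (so that the $s$-dependent remainder $L_{n+1}^s$ never enters).

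\textbf{Expected main obstacle.} There is no genuine difficulty once Lemma \ref{lemma.commute} is available; the only point requiring attention is the bookkeeping in the induction — keeping track that the exponent immediately to the left of $L_{\a_{j+1}}$ is exactly $1-\sigma_{j+1}$, so Lemma \ref{lemma.commute} applies verbatim with $\theta=\sigma_{j+1}$ — together with the routine verification, via Corollary \ref{cor.wel}, that all intermediate expressions are continuous operator-valued functions on $\Sigma_k$, which is what licenses integrating the pointwise identity.
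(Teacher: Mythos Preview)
Your proposal is correct and follows essentially the same approach as the paper: repeated application of Lemma~\ref{lemma.commute} to move each $e^{(1-\sigma_{j+1})L_0}$ past $L_{\a_{j+1}}(\sigma_{j+1})$, combining exponentials via the semigroup law until a single $e^{L_0}$ remains and can be pulled outside the integral. Your version is in fact more detailed than the paper's, which simply writes out the first step and indicates the rest with vertical dots; your added remarks on rigor (via Corollary~\ref{cor.wel}) and on why $\mathcal{P}_\a$ has polynomial coefficients independent of $t,s$ are accurate and useful.
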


\begin{proof}
Applying Lemma \ref{lemma.commute} repeatedly gives
\begin{align*}
  \Lambda_{\a,z}= &\int_{\Sigma_{k}}e^{ (1-\sigma_1)L_0}L_{\a_1}(
  \sigma_1)e^{(\sigma_1-\sigma_2)L_0} \dots e^{
    (\sigma_{k-1}-\sigma_k)L_0}L_{\a_{k}}(\sigma_{k})e^{\sigma_{k}
    L_0}d\sigma\\
  =&\int_{\Sigma_{k}}P_{\a_1}(\sigma_1,x-z,\partial)e^{(1-\sigma_2)
    L_0}\cdots e^{
    (\sigma_{k-1}-\sigma_k)L_0}L_{\a_{k}}(\sigma_{k})e^{\sigma_{k}
    L_0}d\sigma\\%
  & \qquad \qquad \vdots\\
  =  \Big ( \int_{\Sigma_{k}}  &  P_{\a_1}(\sigma_1,x-z,\partial)
  P_{\a_2}(\sigma_2,x-z,\partial) \cdots
  P_{\a_k}(\sigma_{k},x-z,\partial)d\sigma \Big ) e^{L_0}.
\end{align*}
This completes the proof.
\end{proof}

To further simplify some of the formulas, we define
\begin{equation}\label{def.lambda_z_l}
  \Lambda^\ell =\sum_{k=1}^{\min(\ell,d+1)} 
  \sum_{\a\in \fA_{k,\ell}} \Lambda_{\a}
\end{equation}
For convenience, we let \ $\Lambda^0 = e^{L_0}$.

We combine the results obtained so far in this section in the
following representation theorem. We will  perform an error analysis
in the Sobolev spaces $W^{k,p}_{z,a}$ in Section \ref{sec6}.

\begin{lemma}[Definition of the local approximation]
\label{lemma.def.local}
Let $d$ be the iteration level in the
time-ordered expansion \eqref{expansion.formula}, let $n$ be the order of
the Taylor expansion  \eqref{eq.Taylorexp} of $L^s$, as before, and
let $m \in \ZZ_+$. Let 
\begin{equation*}
    E^{s}_{m,d,n} = \sum_{\ell=m+1}^\infty s^{\ell-m-1}\Lambda ^{\ell}\,.
\end{equation*}
(The sum is actually finite.) Then
\begin{equation*}  
   U^{L^s}(1, 0) \, = \  e^{L_0} +\sum_{\ell=1}^m s^\ell \Lambda ^\ell
   +s^{m+1}E^{s}_{m, d,n}\,.
\end{equation*}
Assume that $m \le \min \{d, n \}$. Then 
$\Lambda ^\ell$ does not depend on $d$, $n$, or $s$, and, consequently, 
$E^{s}_{m,d,n}$ also does not depend on $d$ and $n$.
\end{lemma}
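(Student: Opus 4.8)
The plan is to deduce the statement purely by regrouping the expansion \eqref{expand} of $U^{L^s}(1)$ according to powers of the dilation parameter $s$, and then to read off the independence of $E^{s}_{m,d,n}$ from $d$ and $n$ by a bookkeeping argument that tracks which elementary factors can appear in each $s$-power. No genuinely new analytic input is needed beyond Lemmas \ref{lemma.commute} and \ref{lemma.commutator.explicit} and the index calculus of Definition \ref{def.index.set}.

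First I would recall that inserting the Taylor expansion \eqref{eq.Taylorexp} into \eqref{expansion.formula} and collecting iterated integrals is precisely what \eqref{expand} records: it writes $U^{L^s}(1)$ as $e^{L_0}$ plus a sum, over $1 \le k \le d+1$ and over multi-indices $\alpha = (\alpha_1,\dots,\alpha_k)$ with $1 \le \alpha_i \le n+1$, of the terms $s^{|\alpha|}\Lambda_\alpha$, where $\Lambda_\alpha$ is the time-ordered integral defined right after Definition \ref{def.index.set} (ending with $e^{\sigma_k L_0}$ when $k \le d$, and with the factor $U^{L^s}(\sigma_{d+1})$ when $k = d+1$). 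Grouping all terms with $|\alpha| = \ell$ is exactly $\Lambda^\ell$ of \eqref{def.lambda_z_l}, so $U^{L^s}(1,0) = \sum_{\ell \ge 0} s^\ell \Lambda^\ell$ with the convention $\Lambda^0 = e^{L_0}$. Since $\fA_{k,\ell}$ is empty unless $k \le \ell \le k(n+1)$, for fixed $d$ and $n$ this sum has only finitely many nonzero terms, and in fact $\Lambda^\ell = 0$ once $\ell > (d+1)(n+1)$; this is the finiteness asserted for $E^{s}_{m,d,n}$. Peeling off the terms of $s$-order at most $m$ then gives $U^{L^s}(1,0) = e^{L_0} + \sum_{\ell=1}^m s^\ell \Lambda^\ell + s^{m+1}\sum_{\ell \ge m+1} s^{\ell-m-1}\Lambda^\ell$, and the last sum is by definition $s^{m+1} E^{s}_{m,d,n}$, which is the claimed representation.

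For the independence statement I would argue as follows. Assume $m \le \min\{d,n\}$ and fix $\ell$ with $1 \le \ell \le m$. In \eqref{def.lambda_z_l} the level $k$ runs over $1 \le k \le \min(\ell, d+1)$; since $\ell \le m \le d < d+1$, we get $k \le \ell \le d$, so the boundary level $k = d+1$ — the only one containing the genuine evolution system $U^{L^s}$, and hence a dependence on $d$, $n$, $s$ — never occurs. Likewise, for $\alpha \in \fA_{k,\ell}$ the constraint $\alpha_i \ge 1$ forces each $\alpha_i \le \ell-(k-1) \le \ell \le m \le n < n+1$, so the Taylor-remainder operator $L_{n+1}^{s}$ never occurs either. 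Thus every $\Lambda_\alpha$ contributing to $\Lambda^\ell$ is a time-ordered integral over $\Sigma_k$ built only from $e^{L_0}$ and the fixed operators $L_1,\dots,L_n$; by Lemma \ref{lemma.commute}, applied repeatedly exactly as in Lemma \ref{lemma.commutator.explicit}, it equals $\mathcal{P}_\alpha(x-z,\partial)e^{L_0}$ for a fixed polynomial-coefficient differential operator independent of $d$, $n$, $s$. Hence each $\Lambda^\ell$ with $1 \le \ell \le m$ is independent of $d$, $n$, $s$. Finally, $U^{L^s}(1,0)$ is intrinsic to $L^s$ (the parameters $d$, $n$ being artifacts of the expansion), so from $s^{m+1}E^{s}_{m,d,n} = U^{L^s}(1,0) - e^{L_0} - \sum_{\ell=1}^m s^\ell\Lambda^\ell$ the right-hand side is manifestly $d$- and $n$-independent; dividing by $s^{m+1}$ for $s > 0$ gives the same for $E^{s}_{m,d,n}$.

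I expect the only real obstacle to be the combinatorial verification in the third paragraph, namely that for terms of $s$-order at most $m \le \min\{d,n\}$ neither the factor $U^{L^s}$ nor the remainder $L_{n+1}^{s}$ can appear — equivalently, the two inequalities $k \le \ell \le d$ and $\alpha_i \le \ell \le n$. Everything else is routine regrouping of the already-established expansion \eqref{expand} together with a direct appeal to Lemmas \ref{lemma.commute} and \ref{lemma.commutator.explicit}.
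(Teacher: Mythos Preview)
Your proof is correct and follows essentially the same approach as the paper: the paper's own proof is a single sentence recording the key combinatorial fact that $\alpha \in \fA_{k,\ell}$ forces $k \le \ell$ (since each $\alpha_i \ge 1$), and you have simply unpacked this in full detail, including the companion observation $\alpha_i \le \ell \le n$ that rules out the remainder operator $L_{n+1}^{s}$. Your argument for the independence of $E^{s}_{m,d,n}$ by subtraction is a clean way to finish.
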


\begin{proof}
This follows from the fact that, if $\alpha \in \mfkA_k^{\ell}$,
then $k \le \ell := \alpha_1 + \alpha_2 + \ldots + \alpha_k$, 
since all $\alpha_i \ge 1$.
\end{proof}

Consequently, when $m \le \min \{d, n\}$, we shall write $E^{s,z}_{m} = 
E^{s,z}_{m,d,n}$, since $E^{s,z}_{d,n}$ does not depend on $d$ and $n$.

\begin{remark}\label{rem.Gapprox}
The idea pursued here (following \cite{CCMN}) relies on the 
following three analysis points
\begin{itemize}
  \item $U^{L^s}(t, t')$ depends smoothly on $s \in [0, 1]$;
  \item we can explicitly identify  $U^{L^0}(t, t') = e^{(t-t')L_0}$;
  \item the sum $e^{L_0} +\sum_{\ell=1}^m s^\ell \Lambda ^\ell$ is 
  the Taylor polynomial
of $U^{L^s}(1, 0)$ at $s = 0$.
\end{itemize}
Note that $L_0$ is obtained from the operator $L$ by freezing its
coefficients at $(0, z)$ ($t = 0$ in time and $z$ in space). We can thus 
try to approximate 
$U^{L^s}(1, 0)$ with its Taylor polynomial. In turn, after rescaling back, this approximation
will yield an approximation of $U^{L}(s^2, 0)$, that is, for short time. Note that
$U^{L^s}(1, 0)$ does not exhibit any singularities at $s = 0$, but rescaling back 
introduces a strong singularity at $s =0$ in $U^{L}(s^2, 0)$, however, repeating
ourselves, that singularity is entirely due to the rescaling. The next section will
make this construction explicit to define the approximate Green function of
$U^{L}(t, s)$ for $t-s>0$ small. 
\end{remark}

\section{The approximate Green function and error analysis}
\label{sec6}

In this section we introduce our approximate Green function, 
we prove Theorem \ref{thm.maintheorem}, and 
we complete our error analysis. Our error estimates are
using the norm of linear maps between weighted Sobolev
spaces. A different kind of estimate (pointwise in $(x,y)$)
was obtained in \cite{Pascucci12}.

\subsection{Definition of the approximate Green function}
We are now ready to introduce our approximation of the Green function 
\begin{equation*}
   \cG_{t, s}^{L} (x,y)  \ede U^L(t, s)(x, y)
\end{equation*}
of the operator $U^L(t, s)$ following the idea outlined in Remark
\ref{rem.Gapprox}. Since the problem is translation invariant, 
we may assume $s = 0$ and thus we shall write $\cG_{t, 0}^{L} (x,y) = 
\cG_{t}^{L} (x,y)$. Soon, we will replace $z$ (which was fixed in the
previous section) with a function of $x$ and $y$. We first introduce
the conditions that such a function must satisfy.

\begin{definition} \label{def:admissible}
A smooth function $z : \RR^{2N} \to \RR^N$ will be called
{\em admissible} if $z(x,x) = x$, for all $x \in \RR^N$ and all 
partial derivatives (of all positive orders) of $z$ are bounded.
\end{definition}

A typical example is $z(x,y) = \lambda x + (1-\lambda)y$, for some
fixed parameter $\lambda$.  A simple application of the mean value
theorem gives that $\<z-x\> \le C \<y-x\>$ for some constant $C>0$.
{F}rom the point of view of application, $z(x,y)=x$ will give us the
simplest formula to approximate the Green function. However, as discussed 
in \cite{CCLMN}, other more suitable choices are possible, for 
instance, $z(x, y) = (x+y)/2$ seems to be better. In what follows,
we fix an admissible $z=z(x,y)$. We now fix for the rest of the
paper an admissible function $z : \RR^{2N} \to \RR^N$. It will be
the dilation center used to approximate the Green functions at $(x,y)$.
 
Assume we want an approximation of order $m$ (that is, up to
$s^m = t^{m/2}$). We shall use the formulas and the results of
Lemma \ref{lemma.def.local}. We shall choose then in that Lemma
$n, d \ge m$, so that the terms $\Lambda ^\ell$ are independent
of $s$ (and $t$) and $E^{s}_{m,d,n}$ is independent of $d$ and $n$,
so we can write $E^{s}_{m,d,n} = E^{s}_{m}$ for the ``error term.''
Motivated by Lemmata \ref{lemma.rescaling} and \ref{lemma.def.local}, 
we now introduce the following. 

\begin{definition}\label{def.orderm}
We assume $m \le \min \{d, n \}$ and
let the {\em order $m$ approximation} $\cG_t^{[m]}(x,y)$ of the 
Green function $\cG_{t}^{L} (x,y)  \ede \cG_{t, 0}^{L} (x,y)$ of 
$U^L(t, 0)$ be
\begin{equation*}
  \cG_t^{[m]}(x,y) \ede \sum_{\ell=0}^m \, t^{(\ell-N)/2}
  \Lambda ^\ell \Big (z+ \frac{x-z}{\sqrt{t}}, z+ \frac{y-z}{\sqrt{t}} \Big )\,.
\end{equation*}
\end{definition}

For this definition, it suffices to
choose $n = d = m$, but for the proof of our error estimates, the 
freedom to choose much larger $n$ and $d$ will be useful. 
This will be especially the case when dealing with the error term:
\begin{align}\label{eq.def.error}
   \widetilde{E}^{t}_{m}(x,y) \ede & t^{-(m+1)/2}\, \Big [\, \cG_t^{L}(x,y) - 
   \cG_t^{[m]}(x,y) \, \Big]\\
    \seq & E^{t}_{m} \Big (z+ \frac{x-z}{\sqrt{t}}, z+ \frac{y-z}{\sqrt{t}} \Big )\,.,
\end{align}

By replacing $L$ with a translation of size $t'$ in time, we define 
similarly the approximation $\cG_{t, t'}^{[m]}(x,y)$ using
the $[m   ]$--approximate kernel at $(t-t', 0)$ for this translated 
operator.

\subsection{Convergence Analysis} \label{ssec.error.analysis}

In this section, we show that our approximate Green function
$\cG_{t, t'}^{[m]}(x,y)$ satisfies the assumptions of Theorem 
\ref{thm.b.error}. We shall primarily
use pseudo-differential techniques. For all relevant
properties of pseudo-differential operators, we refer to \cite{Tay, Treves}.
For the moment, we continue to keep $z$, the dilation center, fixed.

We start by analyzing in more detail the properties of
the operators $L_m$ in expansion \eqref{eq.expansion}. We recall that \ 
$\langle x\rangle_z= \langle x-z\rangle$.
We also recall that  $L_m$, $0\leq m \leq n+1$, are second-order differential 
operator with polynomial coefficients, independent of the dilation factor $s$.
Moreover, $L_m$ has coefficients of degree at most $m$ in $x-z$. An
immediate consequence of this fact is recorded in the following lemma.

\begin{lemma}\label{lemma.poly.growth}
The family
\begin{equation*}
    \{\<x\>_{z}^{-j} L_j^{z},  \ \<x\>_{z}^{-n-1}
    L^{s,z}_{n+1}; \ s \in (0, 1],\ z \in \RR^N, \ j=0, \ldots, n+1\}
\end{equation*}
defines a bounded subset of $\bL$.
\end{lemma}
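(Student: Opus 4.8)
We must show that the family
\[
    \{\<x\>_{z}^{-j} L_j^{z},\ \<x\>_{z}^{-n-1} L^{s,z}_{n+1}\ ;\ s \in (0,1],\ z \in \RR^N,\ j = 0, \ldots, n+1\}
\]
is a bounded subset of $\bL$ (with no ellipticity requirement, only the uniform bounds on coefficients and derivatives).

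**Plan of proof.**

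The plan is to argue directly from the explicit form of the Taylor coefficients $L_m$ in \eqref{eq.expansion} together with the definition \eqref{dilation operator} of the dilated operator $L^s(t)$. First I would recall that, by the dilation formula, $L^s(t)$ has top-order coefficients $a_{ij}(s^2t, z+s(x-z))$, first-order coefficients $s\,b_i(s^2t, z+s(x-z))$, and zeroth-order coefficient $s^2 c(s^2t, z+s(x-z))$; since $a_{ij}, b_i, c \in \cC^\infty_b(\bR^+\times\bR^N)$, all of these, together with every $x$-derivative, are bounded uniformly in $s \in (0,1]$, $z \in \RR^N$, and $t \in [0,T]$ (each spatial derivative brings down a factor $s \le 1$). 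Taking the $m$-th $s$-derivative at $s = 0$ and dividing by $m!$, one sees by the multivariate Faà di Bruno / Taylor formula that the coefficients of $L_m$ are finite linear combinations of products of partial derivatives of $a_{ij}, b_i, c$ evaluated at $(0,z)$ times monomials $(x-z)^\alpha$ with $|\alpha| \le m$; the numerical coefficients are bounded because $a,b,c \in \cC^\infty_b$, so the map $z \mapsto (\partial^\beta a_{ij}(0,z),\ldots)$ is bounded on $\RR^N$. Hence $L_j^z$ lies in $\cD(j,2)$ with coefficients of the form (bounded constant)$\cdot (x-z)^\alpha$, $|\alpha|\le j$.

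Next I would divide by $\<x\>_z^j = \<x-z\>^j$. The key elementary fact is that for any multi-index $\alpha$ with $|\alpha| \le j$, the function $\<y\>^{-j}(y)^\alpha$ (here $y = x - z$) is smooth and, together with all its derivatives, bounded on $\RR^N$ — this is a standard symbol-type estimate, independent of $z$. Therefore $\<x\>_z^{-j} L_j^z$ is a second-order differential operator whose coefficients are $\cC^\infty_b$ functions with bounds independent of $z$; that is exactly the assertion that this family lies in a bounded subset of $\bL$. For the remainder term $L^{s,z}_{n+1}$, I would use the integral form of the Taylor remainder,
\[
    L^{s,z}_{n+1} \seq \frac{1}{n!}\int_0^1 (1-\sigma)^n \Big(\frac{d^{n+1}}{d\rho^{n+1}} L^\rho\Big)\Big|_{\rho = \sigma s}\, d\sigma,
\]
and observe that the $(n+1)$-st $\rho$-derivative of $L^\rho(t)$ produces coefficients that are (bounded combinations of) partial derivatives of $a_{ij}, b_i, c$ evaluated at $(\sigma^2 s^2 t,\ z + \sigma s(x-z))$ times monomials in $(x-z)$ of degree at most $n+1$, with all factors of $s$ nonnegative powers $\le 1$ after the differentiation; integrating in $\sigma$ preserves uniform boundedness. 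Dividing by $\<x\>_z^{n+1}$ and using the same symbol estimate $\<y\>^{-(n+1)}(y)^\alpha \in \cC^\infty_b$ for $|\alpha|\le n+1$ shows $\<x\>_z^{-n-1}L^{s,z}_{n+1}$ also lies in a bounded subset of $\bL$, uniformly in $s \in (0,1]$ and $z$.

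**Main obstacle.** None of the steps is deep; the only point requiring care is bookkeeping the $s$- and $z$-dependence through the chain rule so that the bounds really are uniform. Concretely, the step to watch is verifying that each spatial differentiation of $a_{ij}^s(s^2t, z+s(x-z))$ etc.\ introduces only nonnegative powers of $s$ and that after taking $s$-derivatives at $s=0$ (for $L_m$) or using the Taylor remainder (for $L_{n+1}^s$), no inverse powers of $s$ survive and no factor of $\<x-z\>$ of degree exceeding the relevant index appears — this is what makes division by $\<x\>_z^{-j}$ (resp.\ $\<x\>_z^{-n-1}$) land in $\cC^\infty_b$ rather than merely in a space of polynomially growing functions. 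Once this is checked, the result is immediate.
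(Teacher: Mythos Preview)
Your proposal is correct and follows exactly the approach the paper has in mind: the paper states the lemma as an ``immediate consequence'' of the fact that each $L_m^z$ is second order with coefficients that are polynomials of degree at most $m$ in $(x-z)$ (with bounded constants coming from derivatives of $a_{ij},b_i,c$ at $(0,z)$), and your argument is a careful, fully detailed unpacking of precisely that observation, including the integral-remainder treatment of $L^{s,z}_{n+1}$.
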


For convenience, we recall we denote $L^{s,z}_{n+1}$ by
$L_{n+1}^z$, even though this operator depends on $s$.
The next Lemma allows to change the center of the dilation. This change is needed 
when $z$ is replaced by a function $z=z(x,y)$.  It also allows to reduce to the 
case $a=0$ to establish bounds in $W^{k,s}_{a,z}$, as long as $a$ belongs to a 
bounded set.

\begin{lemma}\label{cor.exp.growth}
For each given $\epsilon > 0$, the family
\begin{equation*}
   \{e^{-\epsilon \langle z\rangle_w} e^{-\epsilon \<x\>_{w}} L_j^{z}, \
s \in (0, 1],\ z, w \in \RR^N, \ j=0, \ldots, n+1\}
\end{equation*}
is a bounded subset of $\bL$.
\end{lemma}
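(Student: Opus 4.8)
The plan is to deduce Lemma \ref{cor.exp.growth} from Lemma \ref{lemma.poly.growth} by writing the exponential weights as a product of a polynomially bounded factor and a factor whose conjugation with $L_j^z$ stays controlled. First I would recall that, by Lemma \ref{lemma.poly.growth}, the operators $\<x\>_z^{-j} L_j^z$ (and the analogue for $L_{n+1}^{s,z}$) form a bounded family in $\bL$; concretely, the coefficients of $L_j^z$ are polynomials in $x-z$ of degree at most $j$, with coefficients that are uniformly bounded in $z$ and $s$ (they are built from derivatives of the coefficients of $L$ at $(0,z)$, which lie in $\cC^\infty_b$). So it suffices to show that multiplying such an operator by $e^{-\epsilon\<z\>_w}e^{-\epsilon\<x\>_w}$ produces a bounded family in $\bL$.

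The key step is the elementary estimate that a polynomial in $x-z$ of degree at most $j$, multiplied by $e^{-\epsilon\<x\>_w}e^{-\epsilon\<z\>_w}$, together with all its derivatives, is uniformly bounded independently of $z,w$. To see this, write $|x-z| \le |x-w| + |z-w| \le \<x\>_w + \<z\>_w$, so $\<x\>_z^j \le (1+\<x\>_w+\<z\>_w)^j \le C_j\,(1+\<x\>_w)^j(1+\<z\>_w)^j$. Since $t^j e^{-\epsilon t}$ is bounded on $[0,\infty)$ for each fixed $\epsilon>0$ and each $j$, we get $\<x\>_z^j e^{-\epsilon\<x\>_w}e^{-\epsilon\<z\>_w} \le C_{j,\epsilon}$, uniformly in $x,z,w$. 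The derivatives (in $x$) of $e^{-\epsilon\<x\>_w}$ are again of the form (bounded function) times $e^{-\epsilon\<x\>_w}$ because $\<x\>_w$ has all derivatives bounded, so by the Leibniz rule the same bound, with a possibly larger constant, holds for every $\pa^\alpha$ of the product. Hence the coefficients of $e^{-\epsilon\<z\>_w}e^{-\epsilon\<x\>_w}L_j^z$ are totally bounded uniformly in the parameters, i.e.\ this is a bounded subset of $\cC^\infty_b(\RR^N)$-coefficient operators, which is exactly a bounded subset of $\bL$; the ellipticity constant $\gamma$ is unaffected since we are only multiplying by a scalar function and the principal part behaves the same way. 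The argument for $L_{n+1}^{s,z}$ is identical, using the $n+1$ in place of $j$ and the uniformity in $s\in(0,1]$ already contained in Lemma \ref{lemma.poly.growth}.

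I do not expect a serious obstacle here; the statement is essentially a bookkeeping consequence of the previous lemma and the trivial fact that exponential decay beats polynomial growth. The only point that needs a word of care is the uniformity in \emph{two} centers $z$ and $w$ simultaneously: one must use the full weight $e^{-\epsilon\<z\>_w}e^{-\epsilon\<x\>_w}$, not just $e^{-\epsilon\<x\>_w}$, precisely because the polynomial $\<x\>_z^j$ can be large when $z$ is far from $w$ even if $x$ is near $w$; the factor $e^{-\epsilon\<z\>_w}$ absorbs that. A clean way to organize the write-up is to state and prove the scalar estimate $\sup_{x,z,w}\,\<x-z\>^j\,e^{-\epsilon\<x\>_w}e^{-\epsilon\<z\>_w}<\infty$ (and similarly with derivatives) as a sublemma, and then invoke Lemma \ref{lemma.poly.growth} and the Leibniz rule to conclude. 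That keeps the proof to a few lines.

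\begin{proof}
By Lemma \ref{lemma.poly.growth}, the operators $\<x\>_z^{-j} L_j^z$ and $\<x\>_z^{-n-1} L_{n+1}^{s,z}$ form a bounded subset of $\bL$ as $s \in (0,1]$ and $z \in \RR^N$ vary; that is, $L_j^z$ has coefficients that are polynomials in $x-z$ of degree at most $j$ whose coefficients, together with all $x$-derivatives, are bounded uniformly in $z$ (and in $s$, for $L_{n+1}^{s,z}$). Thus it suffices to check that, for each $\epsilon>0$ and each $j$, the function $e^{-\epsilon\<z\>_w}e^{-\epsilon\<x\>_w}\,p(x-z)$, together with all its $x$-derivatives, is bounded uniformly in $x,z,w \in \RR^N$ whenever $p$ is a polynomial of degree at most $j$.

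From $|x-z| \le |x-w| + |z-w| \le \<x\>_w + \<z\>_w$ we get $\<x\>_z^j \le \big(1 + \<x\>_w + \<z\>_w\big)^j \le C_j\,(1+\<x\>_w)^j\,(1+\<z\>_w)^j$. Since $t \mapsto (1+t)^j e^{-\epsilon t}$ is bounded on $[0,\infty)$, it follows that
\begin{equation*}
  \<x\>_z^{\,j}\, e^{-\epsilon\<x\>_w}\, e^{-\epsilon\<z\>_w} \ \le\ C_{j,\epsilon}\,,
\end{equation*}
uniformly in $x,z,w$. Because $\<x\>_w$ has all its $x$-derivatives bounded, each derivative $\pa^\alpha\big(e^{-\epsilon\<x\>_w}\big)$ equals a bounded function (uniformly in $w$) times $e^{-\epsilon\<x\>_w}$; likewise every $x$-derivative of $p(x-z)$ is a polynomial in $x-z$ of degree at most $j$. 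By the Leibniz rule, $\pa^\alpha\big(e^{-\epsilon\<z\>_w}e^{-\epsilon\<x\>_w}p(x-z)\big)$ is a finite sum of terms of the form (bounded function, uniformly in the parameters) times $e^{-\epsilon\<z\>_w}e^{-\epsilon\<x\>_w}\<x\>_z^{\,j}$, hence is bounded uniformly in $x,z,w$ by the estimate above.

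Applying this with $p$ ranging over the (finitely many) coefficients of $L_j^z$, we conclude that the coefficients of $e^{-\epsilon\<z\>_w}e^{-\epsilon\<x\>_w}L_j^z$ are totally bounded, uniformly in $z,w \in \RR^N$ (and in $s \in (0,1]$ in the case of $L_{n+1}^{s,z}$). Multiplication by the positive scalar function $e^{-\epsilon\<z\>_w}e^{-\epsilon\<x\>_w}$ does not change the symmetry of the second-order part nor the ellipticity constant. Hence the indicated family is a bounded subset of $\bL$.
\end{proof}
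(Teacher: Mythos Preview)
Your proof is correct and rests on the same idea as the paper's: both use the triangle-type inequality $\langle x-z\rangle \le \langle x-w\rangle + \langle z-w\rangle$ together with Lemma \ref{lemma.poly.growth}. The paper organizes it slightly differently, writing
\[
e^{-\epsilon\langle z\rangle_w}\,e^{-\epsilon\langle x\rangle_w}\,L_j^z
= e^{\epsilon(\langle x-z\rangle-\langle x-w\rangle-\langle w-z\rangle)}\cdot\bigl(e^{-\epsilon\langle x\rangle_z}L_j^z\bigr),
\]
and observing that the first factor is $\le 1$ and lies in $\cC^\infty_b$ uniformly in $z,w$ (since $\langle\cdot\rangle$ has bounded derivatives), while the second factor is bounded in $\bL$ by the exponential-kills-polynomial argument centered at $z$. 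You instead bound $\langle x-z\rangle^j$ by $C_j(1+\langle x\rangle_w)^j(1+\langle z\rangle_w)^j$ and absorb each polynomial into the corresponding exponential. These are the same argument in different dress.

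One minor slip: your closing remark that ``the ellipticity constant is unaffected'' is not correct --- multiplying the principal part by $e^{-\epsilon\langle x\rangle_w}$, which tends to $0$ at infinity, destroys any uniform lower bound. This is harmless here, since the statement concerns $\bL$ (not $\bL_\gamma$), which imposes no ellipticity; boundedness in $\bL$ only means the coefficients lie in a bounded set of $\cC^\infty_b$. Just drop that sentence.
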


\begin{proof} 
A simple calculation shows that
\begin{equation*}
     \<x-z\> - \<x-w\> \leq \<w-z\>.
\end{equation*}
Therefore $e^{\epsilon\,(\<x-z\>-\<x-w\>-\<w-z\>)}\leq 1$, and hence the family
\begin{equation*}
  e^{\epsilon(\<x-z\>-\<x-w\>-\<w-z\>)}\, e^{-\epsilon \<x\>_z}\,
  L_j^z=e^{-\epsilon \<z-w\>}\, e^{-\epsilon \<x\>_{w}}\, L_j^z
\end{equation*}
is bounded for $s\in (0,1]$ and $j=0,1,2,\cdots,n+1$ as claimed.
\end{proof}

Lemma \ref{lemma.perturbative} and Lemma \ref{cor.exp.growth}
yields the following result.

\begin{corollary}\label{cor.wel.def}
For any $\a_1,\a_2,\cdots,\a_k$ with $\sum_{i=1}^k\a_i=\ell$, the
operators
\begin{equation*}
  \Lambda_{\a,\ell}=\int_{\Sigma_{k}}e^{ \tau_0 L_0}L_{\a_1}(
  \tau_1) \, e^{\tau_1 L_0} \cdots e^{
    \tau_{k-1}L_0}L_{\a_{k}}(\tau_{k})e^{\tau_{k} L_0}d\tau,\quad
  k\leq d
\end{equation*}
and
\begin{equation*}
  \Lambda_{\a,\ell}=\int_{\Sigma_{d+1}}e^{ \tau_0 L_0}L_{\a_1}(
  \tau_1) \, e^{\tau_1 L_0} \cdots e^{
    \tau_{d}L_0}L_{\a_{d+1}}(\tau_{d+1})U(\tau_{d+1})d\tau
\end{equation*}
are bounded linear operators from $W_{a, z}^{s,p}$ to
$W_{a-\epsilon}^{r,p}$ for any $z \in \RR^N$, $r, s \in \RR$,
$1<p<\infty$, and $\epsilon > 0$. Moreover, we have that
\begin{equation*}
  \|\Lambda_{\a,\ell}\|_{W^{s, p}_{a, z}  ,  W^{r, p}_{a-\epsilon,w}}
  \le C_{s, r, p, a, \epsilon}\, e^{k\epsilon<z-w>},
\end{equation*}
for a bound $C_{s, r, p, a, \epsilon}$ that does not depend on
$z$. In particular, each $\Lambda_{\a,\ell}$ is an operator with
smooth kernel $\Lambda_{\a,\ell}(x, y)$.
\end{corollary}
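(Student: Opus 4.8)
The plan is to combine Lemma~\ref{lemma.perturbative} (in the form of its specialization Corollary~\ref{cor.wel}) with the uniform-in-center bounds of Lemma~\ref{cor.exp.growth}, and then to integrate the resulting estimate over the compact simplex $\Sigma_k$. The first move is to reduce to $a=0$: by Lemma~\ref{lemma.same.class} and Remark~\ref{rem.cont}, conjugation by $\rho_a$ transforms the whole configuration --- the operator $L_0$, the polynomial-coefficient operators $L_{\a_i}$, and $U^{L^s}$ --- into one of exactly the same type with $a=0$, and for $a=0$ the source space $W^{s,p}_{0,z}=W^{s,p}$ no longer refers to the center $z$ at all; the general $a$ is then recovered by conjugating back, the conjugating factor $e^{a(\langle x\rangle_w-\langle x\rangle_z)}$ being a bounded multiplication operator (since $|\langle x\rangle_w-\langle x\rangle_z|\le\langle z-w\rangle$), so only the constant changes.

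With $a=0$ fixed, fix $\epsilon>0$ and set $b_i:=\epsilon/k$ for $i=1,\dots,k$, so $|b|=\epsilon$. Write $L_{\a_i}(\tau_i)=e^{\epsilon\langle z-w\rangle}\,V_i(\tau_i)$ with $V_i:=e^{-\epsilon\langle z-w\rangle}L_{\a_i}$; Lemma~\ref{cor.exp.growth} then shows that $e^{-b_i\langle x\rangle_w}V_i=e^{-\epsilon\langle z-w\rangle}e^{-(\epsilon/k)\langle x\rangle_w}L_{\a_i}$ lies in a \emph{fixed} bounded subset of $\bL$, uniformly in $z,w\in\RR^N$, in $s\in(0,1]$, and in $\a_i\le n+1$. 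Consequently the argument proving Corollary~\ref{cor.wel} (applied with weight center $w$, and with the $V_i$ in place of the $L_{\a_i}$ --- the scalar factors being harmless) yields that
\[
   \Phi_V(\tau)\ =\ e^{\tau_0 L_0}\,V_1(\tau_1)\,e^{\tau_1 L_0}\cdots e^{\tau_{k-1}L_0}\,V_k(\tau_k)\,E(\tau_k)
\]
(with $E(\tau_k)=e^{\tau_k L_0}$ if $k\le d$ and $E(\tau_k)=U^{L^s}(\tau_k)$ if $k=d+1$) is a continuous function of $\tau\in\Sigma_k$ into $\cL\big(W^{s,p}_{a,w},W^{r,p}_{a-\epsilon,w}\big)$ whose operator norm is bounded \emph{uniformly} in $z$ and $s$ --- for the last factor one uses that $\{L^s:s\in[0,1],z\in\RR^N\}$ is a bounded family in $\bL_\gamma$, so Corollary~\ref{cor.cont.U} applies uniformly. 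The integrand for $\Lambda_{\a,\ell}$ is $\Phi(\tau)=e^{k\epsilon\langle z-w\rangle}\Phi_V(\tau)$, since each of the $k$ factors $L_{\a_i}$ contributes one scalar $e^{\epsilon\langle z-w\rangle}$; integrating over the compact set $\Sigma_k$ gives at once that $\Lambda_{\a,\ell}\in\cL\big(W^{s,p}_{a,z},W^{r,p}_{a-\epsilon,w}\big)$ (using, for $a=0$, that $W^{s,p}_{0,z}=W^{s,p}=W^{s,p}_{0,w}$) with $\|\Lambda_{\a,\ell}\|\le C_{s,r,p,a,\epsilon}\,e^{k\epsilon\langle z-w\rangle}$, the constant independent of $z$; the general $a$ case follows by the conjugation of the first paragraph, at the cost of letting the constant depend on $a$.

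Since $r,s\in\RR$ and $\epsilon>0$ above were arbitrary, $\Lambda_{\a,\ell}$ maps every $W^{s,p}_{a,z}$ into every $W^{r,p}_{a-\epsilon,w}$; hence $\Lambda_{\a,\ell}$ --- and likewise its formal transpose, which is assembled from the transposed factors and has the same structure --- is smoothing of infinite order and carries compactly supported distributions into smooth, rapidly decreasing functions. By the Schwartz kernel theorem this forces the distributional kernel $\Lambda_{\a,\ell}(x,y)$ to be a $\cC^\infty$ function on $\RR^N\times\RR^N$, completing the proof.

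The step I expect to be the main obstacle is establishing the continuity and \emph{uniform} boundedness of $\Phi_V$ on all of $\Sigma_k$, including the boundary where some factor $e^{\tau_j L_0}$ degenerates as $\tau_j\to 0$. This is handled exactly as in the proof of Lemma~\ref{lemma.perturbative}: one covers $\Sigma_k$ by the open sets $\{\tau_j>1/(k+2)\}$, $j=0,\dots,k$, on each of which the unique potentially singular semigroup factor stays bounded away from $\tau_j=0$ while the remaining semigroup factors restore, via Proposition~\ref{mapping.L0}, the derivatives lost to the differential operators $V_i$; the only point genuinely new relative to Lemma~\ref{lemma.perturbative} is to carry out that argument with all constants uniform in the centers $z,w$, which is exactly what Lemmata~\ref{lemma.poly.growth} and~\ref{cor.exp.growth} were designed to supply, and it is also the reason the total weight loss $\epsilon$ must be distributed evenly as $\epsilon/k$ among the $k$ factors.
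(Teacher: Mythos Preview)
Your proof is correct and follows essentially the same approach the paper intends: the paper's own argument is the single sentence that the corollary follows from Lemma~\ref{lemma.perturbative} and Lemma~\ref{cor.exp.growth}, and you have supplied exactly the missing details of how those two ingredients combine --- splitting the total weight loss as $b_i=\epsilon/k$ so that Lemma~\ref{cor.exp.growth} (applied with parameter $\epsilon/k$, noting $e^{-\epsilon\langle z-w\rangle}\le e^{-(\epsilon/k)\langle z-w\rangle}$) forces each renormalized factor $V_i$ into a $z$-uniform bounded subset of $\bL$, and then integrating the continuous $\Phi_V$ over the compact simplex. The smooth-kernel conclusion via the Schwartz kernel theorem, applied to both $\Lambda_{\alpha,\ell}$ and its transpose, is likewise the standard route.
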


In order to treat the resulting kernels and the resulting remainder term, 
Corollary \ref{cor.wel.def} is not sufficient and we need refined estimates. 
We address first the terms comprising $\cG^{[m]}_t$ of the expansion
introduced in Definition \ref{def.orderm}
via pseudo-differential calculus and treat the terms in the 
remainder next via direct kernel estimates.

\subsection{Bounds on $\cG_t^{[m]}$} \label{s.GtBounds}
We bound each operator $\Lambda_{\alpha}$ appearing
in Definition \ref{def.orderm} separately, where 
$\Lambda_{\alpha}$ is defined in \eqref{def.lambda_z_l}. To this end, we define the
operator
\begin{equation}\label{eq.error4}
  \cL_{s,\alpha} f(x) = s^{-N} \int_{\RR^N} \Lambda_{\alpha   }(z +
  s^{-1}(x - z), z + s^{-1}(y-z))f(y) dt \, ,
\end{equation}

We show below that, for an admissible function $z$, and $\alpha
= (\alpha_1, \ldots, \alpha_k) \in \fA_{k,\ell}$, $k \le n$,
$\alpha_i \le n$, the operator $\cL_{s,\alpha}$ is a
pseudo-differential operator with a good symbol. We shall
then use symbol calculus to derive the desired operator estimates.
By Lemma \ref{cor.exp.growth}, it is enough to assume $a=0$ in 
$W^{s,p}_{a,w}$.

A direct computation gives the following lemma, using the explicit form of 
the kernel of $e^{L_0}$ (which is known since $L_0$ is constant
coefficient).

\begin{lemma} \label{lemma.Lz}
Fix $z \in \RR^N$. Consider the operator $T
= (x-z)^{\beta}\partial_x^{\gamma} e^{L_0^z}$, where $\beta$ and
$\gamma$ are multi-indices and $a \in \CIb(\RR^N)$. Then the
distributional kernel of $T$ is given by
\begin{equation*}
    T(x,y) \seq  (x-z)^\beta (\pa_x^\gamma G)(z; x-y) \, .
\end{equation*}
\end{lemma}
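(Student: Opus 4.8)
The plan is to reduce the statement to the explicit, translation-invariant form of the heat kernel of the constant-coefficient operator $L_0 = L_0^z$. First I would recall that, by construction of the Taylor expansion \eqref{eq.Taylorexp}--\eqref{eq.expansion} applied to the rescaled operator \eqref{dilation operator}, one has $L_0^z = \sum_{i,j} a_{ij}(0,z)\,\partial_i\partial_j$, a constant-coefficient (in $x$) uniformly elliptic operator, whose matrix $[a_{ij}(0,z)]$ is positive definite by \eqref{eq.uniformly.elliptic}. Hence $e^{L_0^z}$ is the time-one map of the associated anisotropic heat semigroup; being translation invariant, it is a convolution operator, and its Schwartz kernel depends on $(x,y)$ only through the difference $x-y$. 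I would write that kernel as $G(z;x-y)$, where $G(z;\cdot)$ is the explicit Gaussian density associated with $[a_{ij}(0,z)]$; it lies in $\cS(\RR^N)$ and is smooth in $z$ as well. Thus $(e^{L_0^z}f)(x) = \int_{\RR^N} G(z;x-y)\,f(y)\,dy$ for $f \in \CIc(\RR^N)$.

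Next I would simply track how the two operations defining $T$ act on this kernel. Differentiation under the integral sign is legitimate here because $G(z;\cdot)$ and all its derivatives have Gaussian decay and $f$ has compact support, so $\partial_x^\gamma (e^{L_0^z}f)(x) = \int_{\RR^N} (\partial^\gamma G)(z;x-y)\,f(y)\,dy$, where $\partial^\gamma$ denotes differentiation in the $\RR^N$-valued second argument of $G$. Left multiplication by the polynomial $(x-z)^\beta$ then only multiplies the integrand by $(x-z)^\beta$, giving $(Tf)(x) = \int_{\RR^N} (x-z)^\beta (\partial^\gamma G)(z;x-y)\,f(y)\,dy$. By the definition of the distributional kernel this reads $T(x,y) = (x-z)^\beta (\partial_x^\gamma G)(z;x-y)$, which is the claim.

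There is essentially no real obstacle: the only two points requiring (minor) care are the identification of $e^{L_0^z}$ as a convolution operator, which is exactly where the constant-coefficient nature of $L_0^z$ is used, and the bookkeeping that $\partial_x^\gamma$ hitting the kernel in its first slot produces the $\gamma$-th derivative of $G$ evaluated at $x-y$, while left multiplication by $(x-z)^\beta$ multiplies the kernel pointwise. The parameter $a \in \CIb(\RR^N)$ appearing in the statement does not enter this computation and may be disregarded for this lemma. The content is just the combination of these elementary correspondences with the known Gaussian form of $G$.
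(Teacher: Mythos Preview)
Your proposal is correct and matches the paper's approach: the paper simply states that the lemma follows from ``a direct computation \ldots\ using the explicit form of the kernel of $e^{L_0}$ (which is known since $L_0$ is constant coefficient),'' and what you have written is precisely that computation spelled out. Your remark that the symbol $a \in \CIb(\RR^N)$ plays no role in the statement as written is also correct; it is a vestige of the more general term $a(z)(x-z)^\beta \partial_x^\gamma e^{L_0^z}$ used in the proof of the subsequent theorem.
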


The next theorem characterizes the symbol of $\cL_{s,\alpha}$ belonging to 
the principal term of the expansion

\begin{theorem}  \label{thm.mainbound}
Let $\alpha \in \fA_{k,\ell}$, $k \le n$, $\alpha_i \le n$.
Let $z: \RR^N \times \RR^N$ be an admissible function
Then there exists a uniformly bounded family $\{\varrho_{s}\}_{s\in (0,1]}$ in
$S^{-\infty} (\bR^N \times \bR^N)$ such that
\begin{equation*}
  \cL_{s,\alpha} \seq \sigma_s(x, D) \ede \varrho_{s}(x, sD), \quad
  \sigma_s(x, \xi) = \varrho_s(x, s\xi).
\end{equation*}
\end{theorem}

\begin{proof}
By Lemma \ref{lemma.commutator.explicit},  $\Lambda_{\alpha,
z}$ is a finite sum of terms of the form $ (x-z)^\beta
\pa_x^\gamma e^{L_0^z}$. We recall that $a$ is smooth with bounded 
derivatives of all orders. Let $k_z(x, y)$ be
the distribution kernel of $a(z)(x-z)^\beta \pa_x^\gamma e^{L_0^z}$
and set
\begin{equation*}
  K_s(x, y) := s^{-N}k_z(z + s^{-1}(x - z), z + s^{-1}(y-z)),\quad z = z(x, y).
\end{equation*}
By abuse of notation, we shall denote also by $K_s$ the integral
operator with kernel $K_s$. It is enough to show that there exists a uniformly
bounded family $\{\varrho_{s}\}_{s\in (0,1]}$ in $S^{-\infty}$ such
that
\begin{equation*}
  K_s \seq \varrho_s(x, sD).
\end{equation*}
A direct calculation shows that
\begin{multline*}
  K_s(x, y) =
  a(z) s^{-|\beta|-N} (x - z)^{\beta} \zeta(z,
  s^{-1}(x-y)), \quad z = z(x,y),
\end{multline*}
with $\zeta(z, x)$ the kernel of $\pa_x^\gamma e^{L_0^z}$.
Then the symbol of $K_s$, $\sigma_s(x, \xi)$ is given by
\begin{equation*}
  \sigma_s(x, \xi) = \int_{\RR^N} e^{-\imath y \cdot \xi} a(z)
   s^{-|\beta|-N} (x - z)^{\beta} \zeta(z, s^{-1}y) dy, \quad z = z(x,x-y).
\end{equation*}
If we denote
\begin{equation*}
  \varrho_s(x, \xi) = \int_{\RR^N} e^{-\imath y \cdot \xi} a(z)
   s^{-|\beta|} (x - z)^{\beta} \zeta(z, y) dy, \quad z = z(x,x-sy),
\end{equation*}
we have  $\sigma_s(x, \xi) = \varrho_s(x, s\xi)$. We show next
that $\varrho_s$ is a bounded family in $S^{-\infty}$. This follows 
from the continuity of multiplication with smoothing symbols, given 
that  that $a(z) \in S^0_{(1, 0)}$ and $s^{-1}(x_j - z_j(x, x-sy))
\in S^0_{(1, 0)}$ and they form bounded families for $s \in [0, 1]$.
\end{proof}

A simple change of variables and the definition of the symbol class 
$S^m_{1,0}$ gives the lemma below.

\begin{lemma}
Let $\varrho(x,\xi)$ be a symbol in $S^{-\infty}$, then $s^k
\varrho(x,s\xi)$ is a symbol in $S^{-k}_{1,0}$ uniformly bounded in
$(0,1]$ with respect to $s$.
\end{lemma}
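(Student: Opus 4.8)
The plan is to reduce the symbol estimates for $\varrho_s(x,\xi):=s^k\varrho(x,s\xi)$ to the ones already available for $\varrho$ by rescaling the frequency variable. First I would differentiate: by the chain rule, for all multi-indices $\alpha,\beta$,
\[
  \pa_x^\beta\pa_\xi^\alpha\varrho_s(x,\xi)\seq s^{k+|\alpha|}\,\big(\pa_x^\beta\pa_\xi^\alpha\varrho\big)(x,s\xi)\,.
\]
Since $\varrho\in S^{-\infty}=\bigcap_{m}S^m_{1,0}$, I may apply the defining estimate of the class $S^{-(k+|\alpha|)}_{1,0}$ to $\varrho$, evaluated at the frequency $s\xi$, to obtain a constant $C_{\alpha,\beta}$ independent of $s$ with
\[
  \big|\pa_x^\beta\pa_\xi^\alpha\varrho_s(x,\xi)\big|\,\le\, C_{\alpha,\beta}\, s^{k+|\alpha|}\,\jap{s\xi}^{-k-|\alpha|}\,.
\]

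Next I would invoke the elementary inequality $s\jap{\xi}\le\jap{s\xi}$, valid for all $s\in(0,1]$ and $\xi\in\RR^N$: indeed $s^2\jap{\xi}^2=s^2+s^2|\xi|^2\le 1+s^2|\xi|^2=\jap{s\xi}^2$ because $s\le 1$. Dividing by $\jap{\xi}\jap{s\xi}$ gives $s\,\jap{s\xi}^{-1}\le\jap{\xi}^{-1}$, and raising this to the (non-negative, since $k\ge 0$) power $k+|\alpha|$ yields $s^{k+|\alpha|}\jap{s\xi}^{-k-|\alpha|}\le\jap{\xi}^{-k-|\alpha|}$. Inserting this into the previous display gives
\[
  \big|\pa_x^\beta\pa_\xi^\alpha\varrho_s(x,\xi)\big|\,\le\, C_{\alpha,\beta}\,\jap{\xi}^{-k-|\alpha|}
\]
for all $x,\xi\in\RR^N$, all multi-indices $\alpha,\beta$, and all $s\in(0,1]$, with $C_{\alpha,\beta}$ independent of $s$. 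This is precisely the assertion that $\{\varrho_s\}_{s\in(0,1]}$ is a bounded subset of $S^{-k}_{1,0}$, which completes the proof.

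I do not expect a genuine obstacle here: the argument is routine. The only point that deserves attention is the bookkeeping of the powers of $s$ in the rescaling step, where the prefactor $s^k$ must combine exactly with the $s^{|\alpha|}$ produced by the differentiation so as to absorb the discrepancy between $\jap{s\xi}$ and $\jap{\xi}$ — this is exactly why the weight $s^k$, and no other power, lands the family in $S^{-k}_{1,0}$.
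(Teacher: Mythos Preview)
Your proof is correct and is exactly the routine computation the paper has in mind: the paper itself merely records that ``a simple change of variables and the definition of the symbol class $S^m_{1,0}$'' give the result, without writing out the steps. Your parenthetical observation that $k\ge 0$ is needed (so that the exponent $k+|\alpha|$ is non-negative when you use $s\jap{s\xi}^{-1}\le\jap{\xi}^{-1}$) is accurate and is implicit in the paper's application of the lemma.
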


The symbol calculus gives mapping properties on Sobolev spaces by standard results.

\begin{theorem}\label{thm.refined}
For any $1<p<\infty$, any $r\in \RR$,
\begin{equation}
   s^{k} \|\cL_{s, \alpha}\|_{W^{r,p} , W^{r+k,p}} \le C_{k, r, p},
\end{equation}
for a constant $C_{k, r, p}$ independent of $s$.
The same estimate is valid for the operator with kernel
$\cG_{t}^{[m   ]}(x, y)$.
\end{theorem}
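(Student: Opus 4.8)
The plan is to derive Theorem~\ref{thm.refined} directly from Theorem~\ref{thm.mainbound} together with the two preparatory lemmas immediately preceding the statement. First I would recall that Theorem~\ref{thm.mainbound} exhibits $\cL_{s,\alpha}$ as $\sigma_s(x,D)$ with $\sigma_s(x,\xi)=\varrho_s(x,s\xi)$, where $\{\varrho_s\}_{s\in(0,1]}$ is a \emph{uniformly bounded} family in $S^{-\infty}$. The key structural observation is then the preceding lemma: for a symbol $\varrho \in S^{-\infty}$, the rescaled symbol $s^k\varrho(x,s\xi)$ lies in $S^{-k}_{1,0}$, uniformly in $s\in(0,1]$. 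Applying this to each member of the bounded family $\{\varrho_s\}$, we get that $\{s^k\sigma_s\}_{s\in(0,1]} = \{s^k\varrho_s(x,s\cdot)\}_{s\in(0,1]}$ is a bounded family in $S^{-k}_{1,0}$ (the seminorm bounds are uniform in $s$ because they were uniform for the $\varrho_s$'s and the rescaling lemma produces bounds depending only on finitely many seminorms of the input symbol).

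Next I would invoke the standard $L^p$-boundedness of pseudodifferential operators: an operator with symbol in $S^{0}_{1,0}$ is bounded on $W^{r,p}(\RR^N)$ for $1<p<\infty$ and any $r\in\RR$, with operator norm controlled by finitely many symbol seminorms (this is the Calder\'on--Vaillancourt / Mihlin-type theorem cited via \cite{Tay, Treves}). More precisely, if $q\in S^{-k}_{1,0}$ then $q(x,D): W^{r,p}\to W^{r+k,p}$ is bounded with norm $\le C$ times a fixed seminorm of $q$ in $S^{-k}_{1,0}$. Since $s^k\sigma_s \in S^{-k}_{1,0}$ uniformly, we conclude
\begin{equation*}
  s^k\|\cL_{s,\alpha}\|_{W^{r,p},\,W^{r+k,p}} \seq \|s^k\sigma_s(x,D)\|_{W^{r,p},\,W^{r+k,p}} \le C_{k,r,p},
\end{equation*}
with $C_{k,r,p}$ independent of $s\in(0,1]$, which is the first assertion.

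For the final sentence — that the same estimate holds for the operator whose kernel is $\cG_t^{[m]}(x,y)$ — I would unwind Definition~\ref{def.orderm}. By \eqref{def.lambda_z_l}, $\cG_t^{[m]}$ is (up to the overall powers of $t$ absorbed into the rescaling) a finite sum, over $\ell\le m$ and $\alpha\in\fA_{k,\ell}$ with $k\le\min\{\ell,d+1\}\le n$ and each $\alpha_i\le n$, of the rescaled kernels $\cL_{s,\alpha}$ with $s=\sqrt t$; here the hypotheses $k\le n$ and $\alpha_i\le n$ of Theorem~\ref{thm.mainbound} are exactly what the admissibility of the choice $n=d\ge m$ in Definition~\ref{def.orderm} guarantees. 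Since the estimate just proved is uniform over this finite index set (the constant depends only on $k,r,p$ and the finitely many $(k,\alpha)$ in play), the triangle inequality gives the claimed bound for $\cG_t^{[m]}$, with $s=\sqrt t$.

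The main obstacle I anticipate is purely bookkeeping rather than conceptual: one must check carefully that in passing from the single-$\alpha$ statement of Theorem~\ref{thm.mainbound} to the sum defining $\cG_t^{[m]}$, every term genuinely falls in the regime $k\le n$, $\alpha_i\le n$ where Theorem~\ref{thm.mainbound} applies (terms with some $\alpha_i=n+1$, or with $k=d+1$ involving the factor $U^{L^s}(\tau_{d+1})$, are \emph{not} covered and belong instead to the remainder $\widetilde E^t_m$ treated in the next subsection). This is why the statement is phrased for $\cG_t^{[m]}$ and not for the full Green function. Given the choice $n=d\ge m$ and $\ell\le m$, one has $k\le\ell\le m\le n$ and $\alpha_i\le\ell\le m\le n$ automatically, so the restriction is met; making this explicit is the only real content beyond citing the symbol-calculus black boxes.
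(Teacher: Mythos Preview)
Your proposal is correct and follows essentially the same approach as the paper, which in fact gives no detailed proof beyond the one-line remark ``the symbol calculus gives mapping properties on Sobolev spaces by standard results'' immediately preceding the theorem. Your treatment of the second sentence (the bookkeeping verification that every term in $\cG_t^{[m]}$ falls in the regime $k\le n$, $\alpha_i\le n$ where Theorem~\ref{thm.mainbound} applies) is more explicit than what the paper writes, but it is exactly the intended argument.
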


By Definition \ref{def.orderm}, the above theorem translates into a 
corresponding bound on the principal part $\cG_{t}^{[m]}$
of the asymptotic expansion for the Green's function.

\begin{corollary} \label{cor.maintermbound}
Let $T > 0$ be fixed. For each $1<p<\infty$, $r\in \RR$, and 
any $f\in W^{r,p}$, the operator
$\cG^{[m]}_t$ with kernel $\cG^{[m]}_t(x,y)$ (that is, 
$\cG^{[m]}_tf(x) \ede \int_{\RR^N} \cG^{[m]}_t(x,y) f(y)\, dy$)
is uniformly bounded in $W^{r,p}$ for $t \in (0, T]$.
\end{corollary}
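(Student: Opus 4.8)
The plan is to read this off directly from Theorem \ref{thm.refined}. That theorem asserts that, for every $k\in\ZZ_+$, $1<p<\infty$ and $r\in\RR$, the operator with kernel $\cG_t^{[m]}(x,y)$ satisfies $s^{k}\|\cdot\|_{W^{r,p},W^{r+k,p}}\le C_{k,r,p}$ with $C_{k,r,p}$ independent of $s$, where $s=\sqrt{t}$ is the dilation factor entering Definition \ref{def.orderm}. Specializing to $k=0$ yields at once a bound $\|\cG^{[m]}_t\|_{W^{r,p},W^{r,p}}\le C_{0,r,p}$ with $C_{0,r,p}$ independent of $s\in(0,1]$, which is the content of the corollary modulo the passage from the variable $s$ to the variable $t$.

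To make that passage explicit: $t\in(0,T]$ corresponds to $s=\sqrt{t}\in(0,\sqrt{T}]$, a fixed bounded interval. Either we invoke the standing convention that all constants are allowed to depend on the fixed horizon $T$ together with the observation that the family $\{\varrho_s\}$ produced in Theorem \ref{thm.mainbound} is bounded in $S^{-\infty}$ for $s$ in any bounded interval (its proof used only that $a(z)$ and $s^{-1}(x_j-z_j(x,x-sy))$ are bounded families in $S^0_{1,0}$, which holds on any bounded $s$-interval), or, more simply, we shrink $T$ at the outset so that $\sqrt{T}\le 1$; this is harmless. Either way $C_{0,r,p}$ does not depend on $t$, and since $\cG^{[m]}_tf(x)=\int_{\RR^N}\cG^{[m]}_t(x,y)f(y)\,dy$, the operator $\cG^{[m]}_t$ is bounded on $W^{r,p}$ uniformly for $t\in(0,T]$.

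There is essentially no obstacle here: the statement is the $k=0$ instance of the last sentence of Theorem \ref{thm.refined}, and the real work has already been carried out in Theorems \ref{thm.mainbound} and \ref{thm.refined} through the pseudodifferential representation $\cL_{s,\alpha}=\varrho_s(x,sD)$ with $\{\varrho_s\}$ bounded in $S^{-\infty}$, combined with the standard $L^p$-continuity of order-zero $\Psi$DOs. If one prefers to argue without citing that final sentence, one decomposes $\cG^{[m]}_t$ via Definition \ref{def.orderm} and \eqref{def.lambda_z_l} as the finite sum $\sum_{\ell=0}^{m}\sum_{k\le\min(\ell,d+1)}\sum_{\alpha\in\fA_{k,\ell}} s^{\ell}\,\cL_{s,\alpha}$ (the $\ell=0$ term being the $s$-independent rescaled kernel of $e^{L_0}$), notes that the hypothesis $m\le\min\{d,n\}$ forces $k\le\ell\le n$ and each $\alpha_i\le\ell\le n$, so that Theorem \ref{thm.refined} with $k=0$ applies to every summand, and then bounds the finite sum by the triangle inequality using $s^{\ell}\le 1$ on $(0,1]$.
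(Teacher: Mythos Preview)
Your proposal is correct and follows exactly the paper's approach: the corollary is stated immediately after Theorem \ref{thm.refined} with the remark that ``by Definition \ref{def.orderm}, the above theorem translates into a corresponding bound on the principal part $\cG_t^{[m]}$,'' which is precisely your reading of the $k=0$ case of that theorem. Your additional decomposition via $\cG^{[m]}_t=\sum_{\ell=0}^m s^{\ell}\sum_{\alpha}\cL_{s,\alpha}$ and the observation that $m\le\min\{d,n\}$ forces $k\le\ell\le n$ and $\alpha_i\le n$ (so Theorem \ref{thm.mainbound} applies to every summand) makes explicit what the paper leaves implicit.
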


\subsection{Bounds on $\widetilde{E}^{t}_{m}$} \label{s.ErBounds}
In this subsection, we study the error
term $\widetilde{E}^{t}_{m}$ in \eqref{eq.def.error}.
To this end, we recall that, if $d$ and $n$ are large enough, 
both  $\cG_t^{[m]}(x,y)$ and  $\widetilde{E}^{t   }_{m}$
are  independent of $d$ and $n$. Next, we replace $m$ with  
$M\geq m+r-1$  in Definition \ref{def.orderm}, with $r>0$ 
to be chosen. Then we increase $d$ and $n$ accordingly to satisfy $d, n \ge M$, remembering that
$ \widetilde{E}^{t   }_{m}(x,y)$ does not depend on $d$ and $n$ as long as
they are $d, n \ge m$.
we can decompose $ \widetilde{E}^{t   }_{m}(x,y)$ as follows:
\begin{align}
   \widetilde{E}^{t   }_{m}(x,y) = \sum_{\ell=m+1}^M t^{(\ell-N-m-1)/2}
  \Lambda ^\ell(z+t^{-1/2}(x-z),z+t^{-1/2}(y-z)) \nonumber \\
  \qquad \qquad + t^{(M-m-N)/2}\, \widetilde{E}^{t   }_M(x,y). 
  \label{eq.ErrorTermSpit}
\end{align}
The first $M-m-1$ terms in this expressions are pseudo-differential operators of the 
type discussed in Subsection \ref{s.GtBounds}. The last term contains operators
$\Lambda_{\alpha,M}$ with either $\alpha \in \fA_{n+1, M}$ or for
some $\alpha_i=n+1$. In this range, we generally do not know whether
$\Lambda_{\alpha,l}$ is a pseudo-differential operator or not.
Instead of symbol calculus, it will be enough to apply a well-known result, 
sometimes referred to as Riesz's Lemma, which we recall for the reader's sake 
(see for example \cite[Proposition 5.1, page 573]{TayPDEI}).

\begin{lemma}
Assume $K$ is an integral operator with kernel $k(x,y)$
on a measure space $(X,\mu)$. If for all y and for all x, respectively,
\begin{equation}
  \int_X |k(x,y)|d\mu(x)\leq C_1, \int_X |k(x,y)|d\mu(y)\leq C_2
\end{equation}
 then $K$ is a bounded operator
on $L^p(X,\mu)$, $p\in [1,\infty]$. Moreover,
\begin{equation*}
  \|K\|\leq C_1^{1/p}C_2^{1/q}, \qquad 1/p+1/q=1.
\end{equation*}
\end{lemma}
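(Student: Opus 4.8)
The plan is to use the classical Schur test argument, treating the range $1<p<\infty$ and the two endpoints $p=1$, $p=\infty$ separately. Throughout, all integrands are nonnegative, so Tonelli's theorem justifies both the measurability of $x\mapsto Kf(x)$ and every interchange in the order of integration.

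First I would dispose of the endpoints. For $p=\infty$, I would estimate, pointwise in $x$,
\[
 |Kf(x)| \,\le\, \int_X |k(x,y)|\,|f(y)|\,d\mu(y) \,\le\, \|f\|_\infty \int_X |k(x,y)|\,d\mu(y) \,\le\, C_2\,\|f\|_\infty,
\]
which gives $\|K\|\le C_2 = C_1^{0}C_2^{1}$. For $p=1$, I would integrate the bound $|Kf(x)|\le\int_X|k(x,y)|\,|f(y)|\,d\mu(y)$ over $x$ and swap the order of integration to get $\|Kf\|_1 \le \int_X |f(y)|\big(\int_X|k(x,y)|\,d\mu(x)\big)d\mu(y) \le C_1\|f\|_1$, so $\|K\|\le C_1 = C_1^{1}C_2^{0}$.

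For $1<p<\infty$ with conjugate exponent $q$, the one genuine idea is the Hölder split of the kernel: inside the integral defining $Kf(x)$, write $|k(x,y)| = |k(x,y)|^{1/q}\cdot|k(x,y)|^{1/p}$ and apply Hölder's inequality with exponents $q$ and $p$, obtaining
\[
 |Kf(x)| \,\le\, \Big(\int_X |k(x,y)|\,d\mu(y)\Big)^{1/q}\Big(\int_X |k(x,y)|\,|f(y)|^p\,d\mu(y)\Big)^{1/p} \,\le\, C_2^{1/q}\Big(\int_X |k(x,y)|\,|f(y)|^p\,d\mu(y)\Big)^{1/p}.
\]
Then I would raise this to the $p$-th power, integrate in $x$, swap the order of integration, and use $\int_X|k(x,y)|\,d\mu(x)\le C_1$ to conclude $\|Kf\|_p^p \le C_2^{p/q}\,C_1\,\|f\|_p^p$, i.e. $\|K\|\le C_1^{1/p}C_2^{1/q}$, since $\tfrac1p\cdot\tfrac pq=\tfrac1q$.

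There is no serious obstacle here — this is a standard textbook result (Schur's test). The only points deserving attention are the Fubini--Tonelli interchanges, which are legitimate because all the quantities involved are nonnegative, and the bookkeeping of the exponents in the Hölder step so that the constant matches the claimed $C_1^{1/p}C_2^{1/q}$; the endpoint formulas $C_1^{1}C_2^{0}$ and $C_1^{0}C_2^{1}$ then appear as the limiting cases $q=\infty$ and $q=1$.
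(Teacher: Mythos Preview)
Your proof is correct and is the standard Schur test argument. The paper does not actually prove this lemma: it is stated as a well-known result (``Riesz's Lemma'') and only a reference to Taylor's textbook is given, so there is no in-paper proof to compare against.
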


Again, by Lemma \ref{lemma.same.class}, we need only consider 
the case $a=0$ in $W^{s,p}_{a.w}$.

\begin{lemma}\label{lem.rough}
Let $z : \RR^N \times \RR^N$ be admissible and let $1<p<\infty$. Then, for any 
$\alpha$ and any  $r\ge 0$, there exists a constant $C_{r,p,\alpha}>0$ such that
\begin{equation}
    s^{r} \|\cL_{s, \alpha}\|_{L^{p}, W^{r,p}} \le C_{r, p,\alpha}.
\end{equation}
\end{lemma}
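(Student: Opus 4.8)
The plan is to bound the operator $\cL_{s,\alpha}$, whose kernel is $s^{-N}\Lambda_\alpha(z+s^{-1}(x-z),z+s^{-1}(y-z))$, by verifying the hypotheses of Riesz's Lemma (the lemma just stated) applied not to $\cL_{s,\alpha}$ directly but to the operators $\partial_x^\delta \cL_{s,\alpha}$ with $|\delta|\le r$. Since $W^{r,p}$ for integer $r\ge 0$ is normed by $\sum_{|\delta|\le r}\|\partial^\delta(\cdot)\|_{L^p}$, it suffices to show that each $s^{r}\partial_x^\delta \cL_{s,\alpha}$ maps $L^p\to L^p$ with norm bounded uniformly in $s\in(0,1]$; then one interpolates (or simply restricts to integer $r$ and notes the estimate degrades monotonically) to get all real $r\ge 0$.

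First I would use Lemma \ref{lemma.commutator.explicit} (more precisely its proof, together with Lemma \ref{lemma.Lz}) to write $\Lambda_\alpha$, for the relevant range of $\alpha$, as a finite sum of terms $(x-z)^\beta \partial_x^\gamma e^{L_0^z}$, whose kernels are $(x-z)^\beta(\partial_x^\gamma G)(z;x-y)$ where $G(z;\cdot)$ is the (explicit, Gaussian) heat kernel of the constant-coefficient operator $L_0^z$. Substituting the dilated arguments and differentiating in $x$ produces kernels of the schematic form
\begin{equation*}
  s^{-N-|\beta|+|\text{(lower)}|}\,(x-z)^\beta\,\psi_s(z;s^{-1}(x-y)),\qquad z=z(x,y),
\end{equation*}
where $\psi_s$ is, up to $s$-uniformly-bounded smooth prefactors coming from the chain rule applied to $z(x,y)$, a derivative of a Gaussian in the variable $s^{-1}(x-y)$. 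The Gaussian decay in $s^{-1}(x-y)$ is the crucial gain: it makes the kernel integrable after the change of variables $y\mapsto s^{-1}(x-y)$, producing a factor $s^{N}$, and the scaling bookkeeping shows the remaining negative power of $s$ is exactly $s^{-r}$ after we have extracted $|\delta|\le r$ $x$-derivatives, so $s^{r}\partial_x^\delta\cL_{s,\alpha}$ has kernel with both $\int|\cdot|\,dx$ and $\int|\cdot|\,dy$ bounded uniformly in $s$. Here I would invoke Lemma \ref{lemma.poly.growth} / Lemma \ref{cor.exp.growth} to control the polynomial coefficients $(x-z)^\beta$ against the Gaussian, and the admissibility of $z$ (in particular $\langle z-x\rangle\le C\langle y-x\rangle$ and boundedness of all derivatives of $z$) to absorb the chain-rule factors into $s$-uniform constants.

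The main obstacle is the bookkeeping of powers of $s$ under the dilation combined with the $x$-differentiations: each $\partial_x$ can fall on $(x-z(x,y))^\beta$, on the Gaussian argument $s^{-1}(x-y)$ (producing an extra $s^{-1}$), or on $z(x,y)$ inside either; one must check that after taking $|\delta|$ derivatives and using Gaussian decay to perform the $y$- (resp. $x$-) integration, the worst-case surviving power of $s$ is $s^{-r}$ and not worse, uniformly over the finitely many terms in the sum decomposing $\Lambda_\alpha$. A secondary point is that for the error-term range — $\alpha\in\fA_{n+1,M}$ or some $\alpha_i=n+1$ — we may not know $\Lambda_\alpha$ is a $\Psi$DO, but we do not need that: we only used the explicit kernel structure coming from $e^{L_0^z}$ on the left factor and the $L^p$-boundedness/smoothing of the intermediate factors (Corollary \ref{cor.wel.def}, Lemma \ref{mapping.property.U(t,r)}, Proposition \ref{mapping.L0}) to reduce to a kernel estimate on the leftmost $(x-z)^\beta\partial_x^\gamma e^{L_0^z}$ piece, to which Riesz's Lemma applies. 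Assembling these gives $s^{r}\|\cL_{s,\alpha}\|_{L^p,W^{r,p}}\le C_{r,p,\alpha}$, which is the claim.
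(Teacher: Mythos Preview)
Your overall strategy --- reduce to Riesz's Lemma by bounding $\int s^{|\delta|}|\partial_x^\delta \cL_{s,\alpha}(x,y)|\,dy$ and $\int\ldots dx$ --- is the same as the paper's. The gap is in how you obtain the pointwise kernel bound on $\Lambda_\alpha$ and its derivatives. Your primary argument invokes Lemma~\ref{lemma.commutator.explicit} to write $\Lambda_\alpha=\cP_\alpha(x-z,\partial)e^{L_0}$ and then exploits the explicit Gaussian. But that lemma is only valid when $k\le d$ and every $\alpha_i\le n$; those $\alpha$ are exactly the ones already handled by the sharper Theorem~\ref{thm.refined} via symbol calculus. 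The raison d'\^etre of Lemma~\ref{lem.rough} is the \emph{remaining} $\alpha$'s (those with $k=d+1$, so ending in $U^{L^s}(\tau_{d+1})$, or with some $\alpha_i=n+1$, so involving the non-polynomial Taylor remainder $L^{s,z}_{n+1}$). For those you cannot commute to extract a leftmost $(x-z)^\beta\partial^\gamma e^{L_0}$, and your fallback --- ``use the explicit kernel of the leftmost $e^{\tau_0 L_0}$ and operator bounds on the rest'' --- does not close: writing $\Lambda_\alpha=\int e^{\tau_0 L_0}R_\tau\,d\tau$, the composite kernel is $\int e^{\tau_0 L_0}(x,w)R_\tau(w,y)\,dw$, and to feed this into Riesz you still need \emph{pointwise} control of $R_\tau(w,y)$, not merely the operator-norm bounds from Corollary~\ref{cor.wel.def} or Lemma~\ref{mapping.property.U(t,r)}.

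The paper closes this gap differently: it never tries to exhibit an explicit Gaussian factor. Instead it uses the duality/Schwartz-kernel identity $|\partial_x^\beta\partial_z^{\beta'}\partial_y^{\beta''}\Lambda_\alpha(x,y)|=|\langle\partial^\beta\delta_x,\;\partial_z^{\beta'}\Lambda_\alpha\,\partial^{\beta''}\delta_y\rangle|$ and bounds the three factors using $\|\delta_x\|_{H^{-q}_{-\epsilon,z}}$, the smoothing estimate of Corollary~\ref{cor.wel.def} for $\Lambda_\alpha:H^{-q}\to H^q_{-\epsilon}$ (which holds for \emph{all} $\alpha$, including the error-term ones), and $\|\delta_y\|_{H^{-q}}$. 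The chain rule then expresses $\partial_x^\gamma\cL_{s,\alpha}(x,y)$ as a finite sum of terms $s^{-N-j}\,\partial_x^\beta\partial_z^{\beta'}\partial_y^{\beta''}\Lambda_\alpha(\tilde x,\tilde y)\cdot\xi(z)$ with $j\le|\gamma|$ and $\xi$ a bounded product of derivatives of $z$, and the change of variable $\lambda=(y-x)/s$ (together with the weighted decay coming from the $H^{-q}_{-\epsilon}$ norms) gives the Riesz bound. So the missing ingredient in your argument is precisely this abstract pointwise kernel estimate for general $\alpha$; once you have it, the rest of your bookkeeping is correct.
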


\begin{proof}
By Riesz's Lemma it suffices to show that, for any multi-index $\gamma$ with $|\gamma|\leq k$,
\begin{equation}\label{reduced Riestz}
  \int_{\RR^N} s^{|\gamma|} |\partial_x^\gamma
  \cL_{s,\alpha}(x,y)|dy\leq C_1, \int_{\RR^N} s^{|\gamma|}
  |\partial_x^\gamma \cL_{s,\alpha}(x,y)|dx\leq C_2,
\end{equation}
where the constants $C_1$ and $C_2$ are independent of $x$ and
$y$ respectively.
We observe that $\partial_x^\gamma \cL_{s,\alpha}(x,y)$ is the sum of terms of the
form
\begin{equation}
  s^{-N-j}\partial_x^{\beta} \partial_z^{\beta'}
  \partial_y^{\beta^{''}}
  \Lambda_{\alpha   }(z+s^{-1}(x-z),z+s^{-1}(y-z))\cdot \xi(z),
\end{equation}
where $j\leq |\gamma|$ and $\xi(z)$ is the product of derivatives of
$z$ with respect to $x$, which is bounded as $z$ is admissible. Keeping $x, \, y$ fixed, 
we bound each of these terms, using the Schwartz Kernel Theorem, since $\Lambda_{\alpha   }$ 
is a smoothing operator:
\begin{multline}\label{estimate}
   |\partial_x^{\beta} \partial_z^{\beta'} \partial_y^{\beta^{''}}
  \Lambda_{\alpha   }(x,y)| = |\<\partial^\beta \delta_x,
  \partial_z^{\beta'} \Lambda_{\alpha   } \partial^{\beta^{''}}
  \delta_y\>|\\
   \leq C \|\partial^\beta \delta_x\|_{H^{-q}_{-\epsilon}}
  \|\partial_z^{\beta'} \Lambda_{\alpha   }\|_{H^{-q} \rightarrow
    H^q_{-\epsilon}}\| \partial^{\beta^{''}}\delta_y\|_{H^{-q}},
\end{multline}
where $\jap{\cdot}$ denotes again the pairing between smooth functions and compactly 
supported distributions. Above, we employed Lemma \ref{cor.wel.def} with $p=2$, $a=0$, 
and $w=z$. Next we estimate the three norms at the right hand side of the above 
inequality. Choosing $q>N+|\beta|$ gives for  all $\epsilon>0$,
\begin{equation*}\|\partial^\beta
  \delta_x\|_{H^{-q}_{-\epsilon}} :=
  \|e^{-\epsilon<x-z(x,y)>}\partial^\beta \delta_x\|_{H^{-q}} \leq
  Ce^{-\epsilon<x-z(x,y)>}
\end{equation*}
and similarly for $\partial^{\beta^{''}} \delta_y$.
Since all the coefficients and their derivatives of $L(t)$ are
bounded, $\partial_z^{\beta'} \Lambda_{\alpha   }$ satisfies the
same mapping properties as $\Lambda_{\alpha   }$. Thus by Corollary
\eqref{cor.wel.def},
  $\|\partial_z^{\beta'}
  \Lambda_{\alpha   }\|_{H^{-q} \rightarrow H^q_{-\epsilon}}
  \leq Ce^{\epsilon<z-x>}$.
Consequently,
\begin{equation*}
  |\partial_x^{\beta} \partial_z^{\beta'} \partial_y^{\beta^{''}}
  \Lambda_{\alpha   }(x,y)| \leq C e^{\epsilon<z-x>
  -\epsilon<x-z>} \leq C,
\end{equation*}
 and we obtain
\begin{multline*}
  |s^{-N-j} \partial_x^{\beta} \partial_z^{\beta'}
  \partial_y^{\beta^{''}}
  \Lambda_{\alpha   }(z+s^{-1}(x-z),z+s^{-1}(y-z)) \cdot \xi(z)|
%
%
  \leq Cs^{-N-|\gamma|}.
\end{multline*}
Finally, by the change of variable
$\lambda=\frac{y-x}{s}$, we verify that \eqref{reduced Riestz} holds.
The proof is complete.
\end{proof}

Lemma \ref{lem.rough} implies immediately

\begin{corollary} \label{thm.rough}
Let $z$ be admissible, and let $k\in \ZZ_+$, $1<p<\infty$. Then, for 
any $r\geq 0$ and $\alpha$, there exists a constant $C_{k,r,p,\alpha}>0$ 
such that
\begin{equation}
  s^{k+r} \|\cL_{s, \alpha}\|_{W^{r,p}, W^{r+k,p}} \le C_{k, r, p,\alpha}.
\end{equation}
\end{corollary}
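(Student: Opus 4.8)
The plan is to obtain Corollary~\ref{thm.rough} directly from Lemma~\ref{lem.rough}, with no new kernel computations: the only additional ingredient is the continuous inclusion $W^{r,p}(\RR^N)\hookrightarrow L^p(\RR^N)$ for $r\ge 0$, $1<p<\infty$, whose operator norm is bounded by a constant $C_{r,p}$ (indeed by $1$, as it is convolution by the nonnegative Bessel kernel $G_r$ of unit mass) that does not depend on $s$. Note that the resulting ``rough'' estimate is weaker than the refined bound of Theorem~\ref{thm.refined} precisely by one extra power of $s^{r}$, which is exactly the loss already present in Lemma~\ref{lem.rough} (there the input space is $L^p$, not $W^{r,p}$), so no sharper conclusion should be expected from this argument.

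Concretely, first I would apply Lemma~\ref{lem.rough} with its free smoothness parameter taken equal to $r+k$ instead of $r$ — legitimate since $r+k\ge 0$ — which produces a constant $C_{r+k,p,\alpha}>0$, uniform in $s\in(0,1]$, such that
\begin{equation*}
  s^{\,r+k}\, \|\cL_{s,\alpha}\|_{L^p,\, W^{r+k,p}} \ \le \ C_{r+k,p,\alpha}.
\end{equation*}
Then I would factor $\cL_{s,\alpha}$, applied to $f\in W^{r,p}$, as the composition $W^{r,p}\hookrightarrow L^p \xrightarrow{\ \cL_{s,\alpha}\ } W^{r+k,p}$, so that
\begin{equation*}
  \|\cL_{s,\alpha}\|_{W^{r,p},\, W^{r+k,p}} \ \le \ C_{r,p}\, \|\cL_{s,\alpha}\|_{L^p,\, W^{r+k,p}}\,.
\end{equation*}
Multiplying by $s^{r+k}$ and combining the two displays yields the assertion with $C_{k,r,p,\alpha} := C_{r,p}\,C_{r+k,p,\alpha}$.

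There is essentially no obstacle at this stage: all of the substantive work has already been done in Lemma~\ref{lem.rough}, where Riesz's Lemma together with the Schwartz-kernel estimates for the smoothing operators $\Lambda_{\alpha}$ (Corollary~\ref{cor.wel.def}) supplied the $s$-uniform bounds on the kernels of $\cL_{s,\alpha}$. If one prefers to avoid the embedding, an equivalent route is to differentiate in $x$: for any multi-index $\gamma$ with $|\gamma|\le k$, the kernel of $s^{|\gamma|}\partial_x^\gamma\cL_{s,\alpha}$ is, exactly as in the proof of Lemma~\ref{lem.rough}, a finite sum of terms of precisely the same shape as the kernel of some $\cL_{s,\alpha'}$ (with $\Lambda_{\alpha}$ replaced by a fixed derivative $\partial_z^{\beta'}\partial_y^{\beta''}\Lambda_{\alpha}$, which obeys the same mapping bounds, times a bounded factor coming from derivatives of the admissible function $z$); applying Lemma~\ref{lem.rough} to each such term, summing over $|\gamma|\le k$, and invoking the equivalence $\|g\|_{W^{r+k,p}}\sim\sum_{|\gamma|\le k}\|\partial^\gamma g\|_{W^{r,p}}$ gives the same bound. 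In either approach the one point to verify — and it is immediate — is that every constant produced depends only on $k,r,p,\alpha$ and is uniform over $s\in(0,1]$.
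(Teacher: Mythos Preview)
Your proposal is correct and matches the paper's approach: the paper simply states that ``Lemma~\ref{lem.rough} implies immediately'' the corollary, and your argument via the embedding $W^{r,p}\hookrightarrow L^p$ together with Lemma~\ref{lem.rough} applied at level $r+k$ is precisely the intended immediate deduction.
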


Let $\mathcal{E}^{[m   ]}_t$ be the integral operator with kernel $\widetilde{E}^{t   }_m$.

\begin{theorem} \label{thm.cor.refined}
Under the hypotheses of Theorem \ref{thm.refined},  $\mathcal{E}^{[m   ]}_t$
satisfies
\begin{equation}
\|\mathcal{E}^{[m   ]}_t\|_{W^{r,p},  W^{r+k,p}} \leq C_{r,k,p,m}\,
s^{-k}.
\end{equation}
\end{theorem}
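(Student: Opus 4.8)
The plan is to exhibit $\mathcal{E}^{[m]}_t$ as an explicit finite sum of the rescaled operators $\cL_{s,\alpha}$ of \eqref{eq.error4} and then to bound each summand by one of the two estimates already in hand: Theorem \ref{thm.refined} for the pseudodifferential pieces and Corollary \ref{thm.rough} for the rough ones. First I would unwind the definitions. Combining Lemma \ref{lemma.rescaling} with the representation of Lemma \ref{lemma.def.local}, one finds that $\cG^L_t-\cG^{[m]}_t$ equals $s^{m+1-N}E^s_m$ evaluated at the rescaled arguments $z+s^{-1}(x-z)$ and $z+s^{-1}(y-z)$, with $s=\sqrt t$; hence, by \eqref{eq.def.error} and the splitting idea of \eqref{eq.ErrorTermSpit}, the kernel of $\mathcal{E}^{[m]}_t$ is $s^{-N}E^s_m$ at those points, so that — writing $E^s_m=\sum_{\ell\ge m+1}s^{\ell-m-1}\Lambda^\ell$ and $\Lambda^\ell=\sum_{k}\sum_{\alpha\in\fA_{k,\ell}}\Lambda_\alpha$ —
\[
  \mathcal{E}^{[m]}_t \;=\; \sum_{\ell\ge m+1}\, s^{\ell-m-1}\!\!\sum_{k\le\min(\ell,d+1)}\ \sum_{\alpha\in\fA_{k,\ell}}\cL_{s,\alpha}\,,
\]
a finite sum.

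Next I would exploit the freedom, granted by Lemma \ref{lemma.def.local}, to take $d=n=M$ as large as we wish (since $\mathcal{E}^{[m]}_t$ and each $\Lambda^\ell$ are unchanged once $d,n\ge m$), choosing $M\ge m+r$. This produces a dichotomy among the summands. For $\ell\le M$ every $\alpha\in\fA_{k,\ell}$ appearing has $k\le\ell\le n$ and all entries $\alpha_i\le\ell\le n$, so $\cL_{s,\alpha}$ is one of the pseudodifferential operators of Subsection \ref{s.GtBounds} and Theorem \ref{thm.refined} gives $\|\cL_{s,\alpha}\|_{W^{r,p},W^{r+k,p}}\le C s^{-k}$; since $s\in(0,1]$ and $\ell-m-1\ge 0$, each such term contributes $O(s^{-k})$. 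For $\ell\ge M+1$ the operator $\cL_{s,\alpha}$ may contain a factor $U^{L^s}(\tau_{d+1})$ or an entry $\alpha_i=n+1$ and need not be pseudodifferential; here I use the crude bound of Corollary \ref{thm.rough}, $\|\cL_{s,\alpha}\|_{W^{r,p},W^{r+k,p}}\le C s^{-k-r}$, together with $s^{\ell-m-1}\le s^{M-m}$, so that $M\ge m+r$ yields $s^{\ell-m-1}\|\cL_{s,\alpha}\|_{W^{r,p},W^{r+k,p}}\le C\,s^{M-m-r}\,s^{-k}\le C s^{-k}$. Adding the finitely many bounds gives $\|\mathcal{E}^{[m]}_t\|_{W^{r,p},W^{r+k,p}}\le C_{r,k,p,m}\,s^{-k}$; the weighted version needed for Theorem \ref{thm.maintheorem} follows by the reduction to $a=0$ via Lemma \ref{lemma.same.class}, exactly as in the other estimates of this section.

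The main obstacle, and the reason the error term must be split as in \eqref{eq.ErrorTermSpit}, is precisely the tail $\ell\ge M+1$: those operators carry $U^{L^s}$ or the Taylor remainder $L^s_{n+1}$ and therefore escape the pseudodifferential calculus, forcing one to fall back on the kernel-theoretic Riesz/Schur argument behind Lemma \ref{lem.rough}, which is lossy and costs an extra power $s^{-r}$ compared with the symbolic bound. What saves the day is that raising the approximation order $M=d=n$ does not change $\mathcal{E}^{[m]}_t$, so the tail prefactor $s^{M-m}$ can be made to absorb this loss; the only point requiring care is the bookkeeping of powers of $s$, which closes exactly when $M\ge m+r$. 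Everything else is a matter of summing finitely many uniform estimates already proved in this section.
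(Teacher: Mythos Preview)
Your proposal is correct and follows essentially the same route as the paper: use the splitting \eqref{eq.ErrorTermSpit} (equivalently, your full expansion of $E^s_m$ cut at $\ell=M$), apply Theorem \ref{thm.refined} to the pseudodifferential part and Corollary \ref{thm.rough} to the tail, and choose $M$ large enough so that the prefactor $s^{M-m}$ absorbs the extra $s^{-r}$ loss from the Riesz/Schur bound. The paper's proof is exactly this argument, stated slightly more compactly by writing the tail as $s^{M+1-m}\mathcal{E}^{[M]}_t$ rather than as the remaining sum over $\ell\ge M+1$.
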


\begin{proof}
Recall the splitting \eqref{eq.ErrorTermSpit}.
Then, applying Theorems \ref{thm.refined} and
\ref{thm.cor.refined} gives
\begin{multline*}
    \|\mathcal{E}^{[m   ]}_t\|_{W^{r,p}  ,  W^{r+k,p}} \le
    \sum_{\ell = m + 1}^{M} s^{\ell-m-1}
    \sum_{k = m+1}^{\ell} \sum_{\alpha \in \fA_{k, \ell}}
    \|\cL_{\alpha   }\|_{W^{r,p}  ,  W^{r+k,p}}\\  + s^{M+1-m}
    \|\mathcal{E}^{[M   ]}_t\|_{W^{r,p}  ,  W^{r+k,p}}
    \le Cs^{-k} (1  + s^{M+1-m}s^{-r}) \le Cs^{-k} .
\end{multline*}
This completes the proof.
\end{proof}

Our main result, Theorem \eqref{thm.maintheorem}, now follows from 
Definition \ref{def.orderm} the expansion 
the error analysis of this section.

\begin{remark}
It is not difficult to show that the approximation introduced in Theorem
\ref{thm.maintheorem} is invariant under affine transformations, a useful 
fact in applications. We refer to \cite{WenThesis} for more details.
\end{remark}

Combining Theorems \ref{thm.maintheorem} and \ref{thm.cor.refined}
with Theorem \ref{thm.b.error} we obtain the following result.

\begin{theorem}\label{thm.bootstrap}
Let $L\in \LL_\gamma$ for $\gamma>0$, and let $U$ be the evolution 
system generated by $L$ on $W^{k,p}_{a,w}$.
Let $\cG_t^{[m]}$ is the $m^{th}$-order approximation of the Green 
function for $\pa_t-L(t)$, $m\in \ZZ_+$. Then, if $\omega$ and $M$ 
are the constants in Lemma \ref{lemma.renorm},
\[
  |||U(t,0) - \prod_{k=0}^{n-1}\, \left ( \cG_{(k+1)t/n, kt/n}^{[m]} \right )
  |||_{t,0} \leq M
  \frac{t^{(m+1)/2}}{n^{(m-1)/2}} e^{\omega t}.
\]
\end{theorem}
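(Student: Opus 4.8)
The plan is to read the statement off Theorem~\ref{thm.b.error}, applied in the Banach space $X = W^{k,p}_{a,w}$ with $V(t,s) := U(t,s)$, with $G(t,s)$ the integral operator whose kernel is $\cG_{t,s}^{[m]}(x,y)$, and with H\"older exponent $\alpha := (m+1)/2$, so that $\alpha - 1 = (m-1)/2$. Since $L \in \bL_\gamma$, Corollary~\ref{mapping.property.U} together with Lemma~\ref{lemma.same.class} and Remark~\ref{rem.cont} shows that $U$ is an evolution system on $W^{k,p}_{a,w}$ with exponential bounds, so Lemma~\ref{lemma.renorm} (equivalently Corollary~\ref{cor.lemma.renorm}) supplies the uniformly equivalent family $|||\cdot|||_t$ and the constants $\omega$, $M$, $C_U$; this gives exactly hypothesis~(1) of Lemma~\ref{lemma.est.PU} for $V=U$. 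Moreover $G(t,s)$ is bounded on $W^{k,p}_{a,w}$, uniformly for $0\le s\le t\le T$, by Corollary~\ref{cor.maintermbound} (the $k=0$ case of Theorem~\ref{thm.refined}), so each finite product $\prod_{j=0}^{n-1}\cG_{(j+1)t/n,\,jt/n}^{[m]}$ is a well defined bounded operator.

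The only substantive input is hypothesis~(2) of Lemma~\ref{lemma.est.PU}, namely
\[
  \| U(t,s) - G(t,s) \|_{W^{k,p}_{a,w}} \ \le\ C_G\,(t-s)^{(m+1)/2},\qquad 0\le s\le t\le T.
\]
This is Theorem~\ref{thm.maintheorem} in the case of no derivative gain: there $\cG_{t,s}^{L} = \cG_{t,s}^{[m]} + (t-s)^{(m+1)/2}\widetilde E^{t,s}_m$, and the remainder $\widetilde E^{t,s}_m$ is bounded on $W^{k,p}_{a,w}$ uniformly in $0\le s\le t\le T$; rearranging gives the displayed inequality with $C_G$ that bound. (Equivalently, combine the definition~\eqref{eq.def.error} of $\widetilde E^t_m$, the rescaling identity of Lemma~\ref{lemma.rescaling}, and Theorem~\ref{thm.cor.refined} with $k=0$.) This is the point where one uses $\alpha=(m+1)/2\ge 1$, as required by Lemma~\ref{lemma.est.PU}.

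With both hypotheses in place I would run the induction of Theorem~\ref{thm.b.error}(2) with $T_0 = t$ and $\delta = t/n$: taking $x_0=y_0$ arbitrary in $W^{k,p}_{a,w}$ makes the term $\|x_0-y_0\|$ vanish, and the intermediate estimate~\eqref{eq.est.pp3} from that proof, evaluated at the final index (so $n\delta=t$), reads
\[
  ||| U(t,0)y_0 - y_n |||_{t} \ \le\ e^{\omega t}\,C_2\,\frac{t^{(m+1)/2}}{n^{(m-1)/2}}\,|||y_0|||_{0},
\]
with $C_2 = C_U^2 C_G$. Since $y_n = \bigl(\prod_{j=0}^{n-1}\cG_{(j+1)t/n,\,jt/n}^{[m]}\bigr)y_0$ and $y_0\in W^{k,p}_{a,w}$ is arbitrary, this is the asserted bound in the norm $|||\cdot|||_{t,0}$, once $C_U^2 C_G$ is absorbed into $M$. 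Thus the proof is essentially a substitution; the only points needing care are the bookkeeping of constants (matching $M$ and $\omega$ to those of Lemma~\ref{lemma.renorm}, after the enlargement of $\omega$ in Lemma~\ref{lemma.est.PU} and with $C_G$ taken from Section~\ref{sec6}) and checking that the $W^{r,p}_a\to W^{r,p}_a$ form of the remainder estimate invoked is precisely the one established there. There is no deeper obstacle: all the real work sits in Theorems~\ref{thm.maintheorem} and~\ref{thm.b.error}.
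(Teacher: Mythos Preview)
Your proposal is correct and matches the paper's own argument, which consists of the single sentence ``Combining Theorems~\ref{thm.maintheorem} and~\ref{thm.cor.refined} with Theorem~\ref{thm.b.error} we obtain the following result'' immediately preceding the statement. You have simply unpacked that sentence carefully, and your observations about the bookkeeping of constants (that the displayed $M$ really absorbs a factor $C_U^2 C_G$, and that $\omega$ may need to be enlarged via Lemma~\ref{lemma.est.PU}) are apt caveats the paper glosses over.
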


In particular, we have.

\begin{corollary}
In the hypotheses of Theorem \ref{thm.bootstrap}, if $m\geq 2$, then for $t>0$,
\[
    \lim_{n\to }\prod_{k=1}^n\, \left ( \cG_{(k+1)t/n, kt/n}^{[m]} \right ) \seq U(t,0),
\]
strongly in $W^{k,p}_{a,w}$.
\end{corollary}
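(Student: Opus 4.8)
The plan is to deduce the corollary directly from Theorem~\ref{thm.bootstrap}. First I would observe that the bound in Theorem~\ref{thm.bootstrap} is stated in terms of the time-dependent norms $|||\cdot|||_{t,0}$, but these are uniformly equivalent to the fixed norm $\|\cdot\|_{W^{k,p}_{a,w}}$ (by Lemma~\ref{lemma.renorm} / Corollary~\ref{cor.lemma.renorm}), so the estimate
\[
  \Big\| U(t,0) - \prod_{k=0}^{n-1} \cG_{(k+1)t/n,\,kt/n}^{[m]} \Big\|_{W^{k,p}_{a,w}}
  \,\le\, C_{U}\, M\, \frac{t^{(m+1)/2}}{n^{(m-1)/2}}\, e^{\omega t}
\]
holds (as an operator norm bound, hence also applied to any fixed $y_0$), with the constant on the right depending only on $L,m,a,k,p,T$ and $t$, but not on $n$.

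Next I would note that for $m \ge 2$ the exponent $(m-1)/2 \ge 1/2 > 0$, so the right-hand side tends to $0$ as $n \to \infty$ for each fixed $t > 0$. This gives convergence of $\prod_{k=0}^{n-1}\cG_{(k+1)t/n,\,kt/n}^{[m]}$ to $U(t,0)$ in the operator norm on $\cL(W^{k,p}_{a,w})$, which is stronger than the asserted strong convergence: applying both sides to a fixed element $h \in W^{k,p}_{a,w}$ yields
\[
  \Big\| U(t,0)h - \prod_{k=0}^{n-1} \cG_{(k+1)t/n,\,kt/n}^{[m]} h \Big\|_{W^{k,p}_{a,w}} \longrightarrow 0 .
\]
I would also remark on the harmless index shift between the product $\prod_{k=0}^{n-1}$ appearing in Theorem~\ref{thm.bootstrap} and the product $\prod_{k=1}^{n}$ in the corollary statement: these denote the same composition of $n$ consecutive short-time approximate propagators over the partition $0 < t/n < 2t/n < \dots < t$, merely reindexed, so no separate argument is needed.

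The only mild subtlety — and the single point I would be careful about — is the uniform-equivalence step: one must use that the family $|||\cdot|||_t$ is \emph{uniformly} equivalent to $\|\cdot\|$ (as stressed after Equation~\eqref{eq.equiv.norms}), so that passing from the $|||\cdot|||_{t,0}$-bound to a $\|\cdot\|_{W^{k,p}_{a,w}}$-bound costs only a fixed constant $C_U$ independent of $n$; since $t$ is held fixed there is no issue with the $t$-dependence of the norms. Everything else is immediate, so the corollary follows. In fact the argument shows the convergence is in operator norm, not merely strong, which is worth recording but not needed for the stated result.
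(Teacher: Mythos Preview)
Your proposal is correct and matches the paper's approach: the paper presents this corollary with the phrase ``In particular, we have'' and gives no separate proof, treating it as an immediate consequence of Theorem~\ref{thm.bootstrap}. Your write-up spells out exactly why it is immediate (the norm equivalence from Lemma~\ref{lemma.renorm}, the positivity of $(m-1)/2$ for $m\ge 2$, and the harmless reindexing of the product), and your observation that one actually gets operator-norm convergence is correct and worth noting.
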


\bibliographystyle{plain}

\vspace*{.3in}

\end{document}